\def\NN{{\mathbb N}}
\def\NN{\operatorname N}
\newcommand{\vol}{\operatorname{vol}}
\def\d{{\rm d}}
\theoremstyle{plain}
\newtheorem{thm}{Theorem}[section]
\newtheorem{cor}[thm]{Corollary}
\newtheorem{lem}[thm]{Lemma}
\newtheorem{prop}[thm]{Proposition}
\newtheorem{eg}[thm]{Example}
\newtheorem{assumption}[thm]{Hypothesis}
\theoremstyle{definition}
\newtheorem{defn}[thm]{Definition}
\theoremstyle{remark}
\newtheoremstyle{Acknowledgements}% name
  {}% {\topsep}%      Space above
    {}% {\topsep}%      Space below
     {}%         Body font
     {}%         Indent amount (empty = no indent, \parindent = para indent)
    {\bfseries}% Thm head font
    {}%        Punctuation after thm head
     {.5em}%     Space after thm head: " " = normal interword space;
\theoremstyle{Acknowledgements}
\date{\today, \currenttime} 
\begin{document}

\title[a generalisation of local Gross-Zagier]{A generalisation of Zhang's local Gross-Zagier formula}

\author{Kathrin Maurischat}
\address{\rm {\bf Kathrin Maurischat}, Mathematics Center  Heidelberg
  (MATCH), Heidelberg University, Im Neuenheimer Feld 288, 69120 Heidelberg, Germany }
\curraddr{}
\email{\sf maurischat@mathi.uni-heidelberg.de}
\thanks{}

% AMS-Classification
\subjclass[2000]{11G50, 11F70, 11G18, 11G40}

% Keywords
\keywords{Local Gross-Zagier, geometric pairings/heights,  Hecke operators, automorphic representations}

%-----------------------------------------------------------  
\begin{abstract}

On the background of Zhang's local Gross-Zagier formulae for $\operatorname{GL}(2)$ \cite{zhang}, we study some $\wp$-adic problems. The local Gross-Zagier formulae give identities of very special local geometric data (local linking numbers) with certain local Fourier coefficients of a Rankin $L$-function.
The local linking numbers are local coefficients of a geometric (height) pairing. The Fourier coefficients are products of the local Whittaker functions of two automorphic representations of $\operatorname{GL}(2)$ \cite{zhang}.
We establish a matching of the space of  local linking numbers with  the space of all those Whittaker products. 
Further, we construct a universally defined operator on the local linking numbers which reflects the behavior of the analytic Hecke operator. Its suitability is shown by recovering from it an equivalent of the local Gross-Zagier formulae.
Our methods  are throughout constructive and computational.
\end{abstract}
%-----------------------------------------------------------  

\maketitle
%---------------------------------------------------------------------
% Beginn des eigentlichen Artikels
%---------------------------------------------------------------------

%%%%%%%%%%%%%%%%%%%%%%%%%%%%%%%%%%%%%%%%%%%%%%%%%%%%%%%%%%%%%%%%%%%%%%%%%%%%%%%%%%%%%%%%%%%%%%%%%%%%%%%%%%%%%%%%%%%%%%%%%%%%%%%%%%%%%%%%%%%%%%%%%%%%%%%%%%%%%%%%%%%%%%%%%%%%%%%%%%%%%%%%%%%%%%%%%%%%%%%%%%%%%%%%%%%%%%%%%%%%%%%%%%%%%%%%%%%%%%%%%%
% Section: Introduction

%%%%%%%%%%%%%%%%%%%%%%%%%%%%%%%%%%%%%%%%%%%%%%%%%%%%%%%%%%%%%%%%%%%%%%%%%%%%%%%%%%%%%%%%%%%%%%%%%%%%%%%%%%%%%%%%%%%%%%%%%%%%%%%%%%%%%%%%%%%%%%%%%%%%%%%%%%%%%%%%%%%%%%%%%%%%%%%%%%%%%%%%%%%%%%%%%%%%%%%%%%%%%%%%%%%%%%%%%%%%%%%%%%%%%%%%%%%%%%%%%%

\section{Introduction}
In this paper we study local $\wp$-adic problems which have their origin in the famous work of Gross and Zagier~\cite{gross-zagier}. There, Gross and Zagier give a relation between a Heegner point of discriminant $D$ on $X_0(N)$ and the $L$-function attached to a modular newform $f$ of weight $2$ and level $N$ and a character $\chi$ of the class group of $K=\mathbb Q(\sqrt D)$, in case $D$ is squarefree and prime to $N$:
\begin{equation}\label{einleitung_gross-zagier}
 L'(f,\chi,s=\frac{1}{2})=const\cdot\hat h(c_{\chi,f})
\end{equation}
Here, $\hat h$ is the height on $\operatorname{Jac}(X_0(N))$ and $c_{\chi,f}$ is a component of the Heegner class depending on $\chi$ and $f$.
The proof involves a detailed study of the local heights as well as Rankin's method and  holomorphic projection. The Fourier coefficients of the Rankin kernel happen to equal certain coefficients of the cycle under the action of Hecke correspondences.
Applying this formula  to elliptic curves attached to such $f$ sa\-tis\-fying $\operatorname{ord}_{s=1}L(E,s)=1$, Gross and Zagier find points of infinite order on $E(\mathbb Q)$. 
\medskip

Zhang in \cite{zhang}, \cite{zhang2}  improves the results in different aspects. 
He gives a kernel function for the Rankin-Selberg $L$-function which satisfies a functional equation similar to that for $L$. This enables him to compute the central values and the holomorphic projection without the technical difficulty in \cite{gross-zagier} of taking the trace down to the level of the newform. Moreover, he can compare this kernel directly with the height pairing.

Zhang switches the point of view to a completely local one. Thus, modular forms are now viewed as automorphic representations of $\operatorname{GL}_2$ and $\operatorname{CM}$-points are studied on the Shimura variety. (See Chapter~\ref{section_definitionen} for the concrete definitions.) The height pairing of $\operatorname{CM}$-cycles is replaced by a geometric pairing of Schwartz functions $\phi,\psi\in\mathcal S(\chi,\mathbf G(\mathbb A))$,
\begin{equation*}
 <\phi,\psi>=\sum_\gamma m_\gamma <\phi,\psi>_\gamma.
\end{equation*}
While the geometric input is included in the multiplicities $m_\gamma$, the coefficients \mbox{$<\phi,\psi>_\gamma$} now are pure ad\`elic integrals.
In that way, finding a $\operatorname{CM}$-point fitting in (\ref{einleitung_gross-zagier}) is replaced by giving local Schwartz functions for which the local components (the so called local linking numbers) of $<\phi,\psi>_\gamma$ correspond to the local Fourier coefficients of the kernel. These identities are called local Gross-Zagier formulae. (See for example Theorem~\ref{zhangs_local_Gross-Zagier} or the original \cite{zhang} Lemma~4.3.1.)
In the meanwhile, Zhang et al \cite{zhang3} proved these results with no level constraint at all.
\medskip

In this paper, we study the local correspondences between the local linking numbers and the local Fourier coefficients qualitatively at finite places not dividing $2$. We look at the general local linking numbers as well as the Fourier coefficients of the general Mellin transforms of which the $L$-function is the common divisor.
We characterize the local linking numbers as functions on the projective line satisfying certain properties (Proposition~\ref{eigenschaften_nonsplit}, \ref{eigenschaften_split}  and \ref{charakterisierung_der_LLN}).
The Fourier coefficients  are products of Whittaker functions of the automorphic representations  occuring in the Rankin-Selberg convolution, that is the ``theta series'' $\Pi(\chi)$ and the ``Eisenstein series'' $\Pi(\lvert\cdot\rvert^{s-\frac{1}{2}},\lvert\cdot\rvert^{\frac{1}{2}-s}\omega)$. We find that the spaces of these two kinds of functions are essentially the same (Theorem~\ref{satz_matching}).
We construct an operator on  local linking numbers that reflects the Hecke operator on the analytic side (Theorem~\ref{satz_matching_operator}).
This ``geometric Hecke'' operator is tested  quantitatively in the setting of Zhang's local Gross-Zagier formula \cite{zhang} afterwards. It is seen that it produces concrete results equivalent to the local Gross-Zagier formulae (Theorems~\ref{neuformulierung_local_Gross-Zagier} resp. \ref{zhangs_local_Gross-Zagier}).
\medskip

Our techniques used in the proofs are throughout  computational and constructively explicit.
While the background is highly technical and requires big theoretical input, we achieve an insight into the local identities by in parts vast but underlyingly elementary computations ($\wp$-adic integration).

In adapting Zhang's setting we  give evidence that the local Gross-Zagier formulae are not exceptional or unique but included in a more general identity of spaces.
In other words, we provide the geometric (height) pairing with a vast (indeed exploited) class of local test vectors.

By this, the need for a ``geometric Hecke'' operator which is defined universally on any local linking number but not only for a special one  as Zhang's (Section~\ref{section_zhang_referiert} resp. \cite{zhang}, Chap.~4.1)
becomes evident. In giving such a universal operator 
which in addition reproduces Zhang's local Gross-Zagier formulae, we achieve a quite general notion of what is going on around  these formulae.

It stands to reason whether  analogous results for higher genus are possible.
At the moment we have no suggestion how to achieve those. The method of brute force computation tracked here will certainly produce computational overflow not manageable any more.
\medskip

This article is the aggregation of a doctoral thesis~\cite{diss}.
In Section~\ref{section_geom_hintergrund} we review the geometric background needed to define the local linking numbers (Definition~\ref{def_lln}). This is mainly due to Zhang (\cite{zhang}, Chapter~4.1). Note that we have no level constraint  because we make no use of the newform at all.
The only obstruction of the data is that the conductor of the id\`ele character $\chi$ is coprime to the discriminant of the imaginary quadratic field $\mathbb K$ over the totally real field $\mathbb F$. 
 We prefer for computations a parametrization (in the variable $x\in F^\times$) of the local linking numbers which differs form Zhang's one in $\xi$ but is related to that by $x=\frac{\xi}{\xi-1}$.
Roughly speaking, when (locally at a finite place) $G$  is  an inner form of the projective group $\operatorname{PGL}_2(F)$ having a maximal torus $T$ isomorphic to $F^\times\backslash K^\times$ and when $\mathcal S(\chi, G)$ is the Schwartz space of functions transforming like $\chi$ under $T$, then  the local linking number of $\phi,\psi\in\mathcal S(\chi,G)$ is given by
\begin{equation}\label{gl_einleitung_lln}
 <\phi,\psi>_x=\int_{T\backslash G}\int_T\phi(t^{-1}\gamma(x)ty)~dt~\bar\psi(y)~dy,
\end{equation}
where $\gamma(x)$ is a carefully chosen representative of a double coset in $T\backslash G/T$.
These local linking numbers are studied in Chapter~\ref{section_charakterisierung}. They are functions on $F^\times$ characterized by four properties. If $K/F$ is a splitting algebra, then they are exactly the locally constant functions vanishing around $1$ and owning certain chracteristic behavior for $\lvert x\rvert\to 0$ resp. $\lvert x\rvert\to\infty$ (Proposition~\ref{eigenschaften_split}). Similar conditions hold for a field extension $K/F$ (Proposition~\ref{eigenschaften_nonsplit}).

The data used from the theory of automorphic forms are summarized in Section~\ref{section_autom_formen_hintergrund}.
The Rankin-Selberg $L$-series is the common divisor of all the associated Mellin transforms. Their Fourier coefficients are given by products of Whittaker functions both from the theta series $\Pi(\chi)$ and the Eisenstein series. While for the $L$-function essentially the newforms are used (\cite{zhang}, Chapter~3.3), here all Whittaker functions are taken into account. More precisely, only the Kirillov functions are needed which are described by classical results. In this way the results on the analytic side are direct conclusions from well-known facts on automorphic representations of $\operatorname{GL}_2(F)$.

The matching of local linking numbers and Whittaker products is shown in Chapter~\ref{section_matching}.

The rest of the paper is devoted to the exploration of a geometric Hecke operator. This operator is roughly speaking a weighted average of translations of local linking numbers, namely of
\begin{equation*}
 <\phi,\begin{pmatrix}b&0\\0&1\end{pmatrix}.\psi>_x,
\end{equation*}
where $b\in F^\times$. This translation is a first natural candidate for a possible operator since the Hecke operator acts on Whittaker products essentially via translation by $b\in F^\times$ (Proposition~\ref{prop_analytischer_Hecke_allgemein}), too.
The translated local linking numbers are studied in Chapter~\ref{section_translation}.
There is a crucial difference in their properties as well as in the proofs according to whether the torus $T=F^\times\backslash K^\times$ is or is not compact. 
In the first case, the inner integral of the local linking numbers (\ref{gl_einleitung_lln}) has compact support which allows a quick insight into the behavior of the translation (Section~\ref{section_compact}): Fixing $x$, the translation is a compactly supported locally constant function in $b\in F^\times$ (Proposition~\ref{prop_translatiert_kompakt}). This case is completed by an explicit Example~\ref{bsp_lln_translatiert_kompakt} which is proved in Appendix~A. 
In case of a noncompact torus $T$ (i.e., $K/F$ splits) the inner integral is not compactly supported anymore which complicates  study and results enormously. There are terms in the absolute value $\lvert b\rvert^{\pm 1}$ and the valuation $v(b)$ occuring (Theorem~\ref{satz_translatiert_nicht_kompakt}). The proof in~\cite{diss} takes one hundred pages of vast $\wp$-adic integration which cannot be reproduced here. We sketch the outline of this proof and refer to \cite{diss}, Chapter~8, for computations. What is more, we include an Example~\ref{bsp_lln_nichtkompakt} proved in Appendix~B, to give a flavour of what is going on. Moreover, the functions considered in Examples~\ref{bsp_lln_translatiert_kompakt} and \ref{bsp_lln_nichtkompakt} are those used in \cite{zhang} for local Gross-Zagier.

The geometric Hecke operator is studied in Chapter~\ref{section_geom_Hecke}. 
The translations itself do not realize the leading terms of the  asymptotical behavior of the translated Whittaker products, that is the behavior of the analytic Hecke operator.
The operator is constructed such that it satisfies this requirement  (Theorem~\ref{satz_matching_operator}).  Such an operator is not uniquely determined. We choose it such that further results become quite smooth.

Finally, this operator is tested by rewriting the local Gross-Zagier formula in terms of it.
For this, we first report the results of \cite{zhang}, Chapter~4, as far as we need them (Section~\ref{section_zhang_referiert}). Thus, Zhang's results can be compared directly to ours by the reader. We also give shorter proofs than those in \cite{zhang}. 
The action of the Hecke operator  constructed in Chapter~\ref{section_geom_Hecke} on the local linking numbers used in \cite{zhang} (resp. Examples~\ref{bsp_lln_translatiert_kompakt} and \ref{bsp_lln_nichtkompakt}) is given in Section~\ref{section_meine_Gross-Zagier_formel}. It produces the leading term of the local Gross-Zagier formula (Theorem~\ref{neuformulierung_local_Gross-Zagier}).  Moreover, in case of a compact torus $T$ the result  equals exactly that of \cite{zhang}.

%\begin{ack}
 
%\end{ack}
%preliminary
\section{Terminology and preparation}\label{section_definitionen}
\subsection{Embedding in the geometric background}\label{section_geom_hintergrund}
The local linking numbers were defined by Zhang~\cite{zhang} and the concrete geometric setting here goes back to that there (Chapter~4).
%%%
\subsubsection{Global data} 
We start with a swoop from the global framework to the local data we are after.
Let $\mathbb F$ be a totally real algebraic number field and let $\mathbb K$ be a imaginary quadratic extension of $\mathbb F$. Further, let $\mathbb D$ be a division quaternion algebra over $\mathbb F$ which contains $\mathbb K$ and splits at the archimedean places. Let $\mathbf G$ denote the inner form of the projective  group $\operatorname{PGL}_2$ over $\mathbb F$ which is given by the multiplicative group $\mathbb D^\times$,
\begin{equation*}
 \mathbf G(\mathbb F)=\mathbb F^\times\backslash \mathbb D^\times.
\end{equation*}
Let $\mathbf T$ be the maximal torus of $\mathbf G$ given by $\mathbb K^\times$, i.e. $\mathbf T(\mathbb F)=\mathbb F^\times\backslash \mathbb K^\times$. Let $\mathbb A_{\mathbb F}$ (resp.~$\mathbb A_{\mathbb K}$) be the ad\`eles of $\mathbb F$ (resp.~$\mathbb K$) and let $\mathbb A_{\mathbb F,f}=\prod_{v\mid\infty}1\prod_{v\nmid\infty}'\mathbb F_v$ be the subset of finite ad\`eles.
On $\mathbf T(\mathbb F)\backslash\mathbf G(\mathbb A_{\mathbb F,f})$ there is an action of $\mathbf T(\mathbb A_{\mathbb F,f})$ from the left and an action of $\mathbf G(\mathbb A_{\mathbb F,f})$ from the right. The factor space $\mathbf T(\mathbb F)\backslash\mathbf G(\mathbb A_{\mathbb F,f})$ can be viewed as the set of $\operatorname{CM}$-points of the Shimura variety defined by the inverse system of
\begin{equation*}
 \operatorname{Sh}_K := \mathbf G(\mathbb F)^+\backslash \mathcal H_1^n \times \mathbf G(\mathbb A_{\mathbb F,f})/K.
\end{equation*}
Here $K$ runs though the sufficiently small compact open subgroups of $\mathbf G(\mathbb A_{\mathbb F,f})$, $\mathcal H_1$ is the upper halfplane, and $n$ is the number of the infinite places of $\mathbb F$. The $\operatorname{CM}$-points are embedded in $\operatorname{Sh}_K$ by mapping the coset of $g\in\mathbf G(\mathbb A_{\mathbb F,f})$ to the coset of $(z,g)$, where $z\in\mathcal H_1^n$ is fixed by $\mathbf T$.

Let $\mathcal S(\mathbf T(\mathbb F)\backslash\mathbf G(\mathbb A_{\mathbb F,f}))$ be the Schwartz space, i.e. the space of complex valued functions on $\mathbf T(\mathbb F)\backslash\mathbf G(\mathbb A_{\mathbb F,f})$ which are locally constant and of compact support. A character of $\mathbf T$ shall be a character $\chi$ of $\mathbf T(\mathbb F)\backslash\mathbf T(\mathbb A_{\mathbb F,f})$, that is a character of $\mathbb A_{\mathbb K,f}^\times/\mathbb K^\times$ trivial on $\mathbb A_{\mathbb F,f}^\times/\mathbb F^\times$. Especially, $\chi=\prod\chi_v$ is the product of its local unitary components. One has
\begin{equation*}
 \mathcal S(\mathbf T(\mathbb F)\backslash\mathbf G(\mathbb A_{\mathbb F,f}))=\oplus_\chi \mathcal S(\chi,\mathbf T(\mathbb F)\backslash\mathbf G(\mathbb A_{\mathbb F,f})),
\end{equation*}
where $\mathcal S(\chi,\mathbf T(\mathbb F)\backslash\mathbf G(\mathbb A_{\mathbb F,f}))$ is the subspace of those functions $\phi$ transforming under $\mathbf T(\mathbb A_{\mathbb F,f})$ by $\chi$, i.e.  for $t\in \mathbf T(\mathbb A_{\mathbb F,f})$ and $g\in \mathbf G(\mathbb A_{\mathbb F,f})$: $\phi(tg)=\chi(t)\phi(g)$. Any such summand is made up by its local components, 
\begin{equation*}
 \mathcal S(\chi,\mathbf T(\mathbb F)\backslash\mathbf G(\mathbb A_{\mathbb F,f}))=\otimes_v \mathcal S(\chi_v,\mathbf G(\mathbb A_{\mathbb F_v})).
\end{equation*}
A pairing on $\mathcal S(\chi,\mathbf T(\mathbb F)\backslash\mathbf G(\mathbb A_{\mathbb F,f}))$ can be defined as follows.
For functions $\phi,\psi$ in $\mathcal S(\chi,\mathbf T(\mathbb F)\backslash\mathbf G(\mathbb A_{\mathbb F,f}))$ and a double coset $[\gamma]\in \mathbf T(\mathbb F)\backslash \mathbf G(\mathbb F)/\mathbf T(\mathbb F)$ define the {\bf linking number}
\begin{equation}\label{def_globallinkingnumber0}
 <\phi,\psi>_\gamma:=\int_{\mathbf T_\gamma(\mathbb F)\backslash \mathbf G(\mathbb A_{\mathbb F,f})}\phi(\gamma y)\bar\psi(y)~dy,
\end{equation}
where $\mathbf T_\gamma=\gamma^{-1}\mathbf T\gamma\cap\mathbf T$. For $\gamma$ which normalize $\mathbf T$ one has $\mathbf T_\gamma=\mathbf T$. Otherwise $\mathbf T_\gamma$ is trivial. Here $dy$ denotes the quotient measure of nontrivial Haar measures on $\mathbf G$ and $\mathbf T$ adapted adequately later on. Further, let
\begin{equation*}
 m:\mathbf T(\mathbb F)\backslash \mathbf G(\mathbb F)/\mathbf T(\mathbb F)\rightarrow \mathbb C
\end{equation*}
be a multiplicity function. Then
\begin{equation*}
 <\phi,\psi>:=\sum_{[\gamma]}m([\gamma])<\phi,\psi>_\gamma
\end{equation*}
defines a sesquilinear pairing on $\mathcal S(\chi,\mathbf T(\mathbb F)\backslash\mathbf G(\mathbb A_{\mathbb F,f}))$. While determining the multiplicity function is an essential global problem, the coefficients $<\phi,\psi>_\gamma$ are the data linking global height pairings on curves and local approaches. Their local components are subject of this paper.

\subsubsection{Local data}
In studying the local components of the linking numbers~(\ref{def_globallinkingnumber0}), we restrict to the nondegenerate case, i.e. the case that $\gamma$ does not normalize the torus $\mathbf T$. First notice that
\begin{equation}\label{def_globallinkingnumber}
<\phi,\psi>_\gamma:=\int_{\mathbf T(\mathbb A_{\mathbb F,f})\backslash \mathbf G(\mathbb A_{\mathbb F,f})}\int_{\mathbf T(\mathbb A_{\mathbb F,f})}\phi(t^{-1}\gamma ty)~dt~\bar\psi(y)~dy.
\end{equation}

Assume $\phi=\prod_v\phi_v$ and $\psi=\prod_v\psi_v$. Then
\begin{equation*}
\int_{\mathbf T(\mathbb A_{\mathbb F,f})}\phi(t^{-1}\gamma ty)~dt=\prod_v\int_{\mathbf T(F_v)}\phi_v(t_v^{-1}\gamma_v t_vy_v)~dt_v
\end{equation*}
as well as $<\phi,\psi>_\gamma=\prod_v<\phi,\psi>_{\gamma,v}$, where
\begin{equation}
 <\phi,\psi>_{\gamma,v}:=\int_{\mathbf T(\mathbb F_v)\backslash \mathbf G(\mathbb F_v)}\int_{\mathbf T(\mathbb F_v)}\phi_v(t_v^{-1}\gamma_v t_vy_v)~dt_v~\bar\psi_v(y_v)~dy_v.
\end{equation}
In here one has to observe that the local components $<\phi,\psi>_{\gamma,v}$ depend on the choice $\gamma$ while $<\phi,\psi>_v$ does not. Thus, one has to work a little  to get a neatly definition.

As  all the following is local, one simplyfies notation:
Let $F$ denote a localization of $\mathbb F$ at a finite place which does not divide $2$. Then $K$ is the quadratic extension of $F$ coming from $\mathbb K$. $K$ can be a field, $K=F(\sqrt A)$, or a splitting algebra $K=F\oplus F$. For $t\in K$, let $\bar t$ denote the Galois conjugate of $t$ (resp. $\overline{(x,y)}=(y,x)$ in the split case). The local ring of $F$ (resp.~$K$) is $\mathbf o_F$ (resp.~$\mathbf o_K$). It contains the maximal ideal $\wp_F$ (resp.~$\wp_K$, where in the split case  $\wp_K:=\wp_F\oplus\wp_F$).  Let $\wp_F$ be a prime element for $\mathbf o_F$. If it can't be mixed up, one writes $\wp$ (resp.~$\pi$) for $\wp_F$ (resp.~$\pi_F$). The residue class field of $F$ has characteristic $p$ and $q$ elements. Further, let $\omega$ be the quadratic character of $F^\times$ given by the extension $K/F$ that is, $\omega(x)=-1$ if $x$ is not in the image  of the the norm of $K/F$. 
Let $D:=\mathbb D(F)$, $T:=\mathbf T(F)$ and $G:=\mathbf G(F)$.
By Wedderburn-Artin there are two possibilities: Either the quaternion algebra $D$ is split, i.e.  $D\cong M_2(F)$ and $G\cong \operatorname{PGL}_2(F)$. Or $D$ is not split, i.e. a division ring over $F$. Then $G=F^\times\backslash D^\times$ is a nonsplit inner form of $\operatorname{PGL}_2(F)$. One defines
\begin{equation}\label{def_delta(D)}
 \delta(D):=\left\{\begin{matrix}0,&\textrm{if $D$ is split}\\1,&\textrm{if $D$ is not split}\end{matrix}\right..
\end{equation}
Generally, there is exists $\epsilon\in D^\times$, such that for all $t\in K$ one has $\epsilon t=\bar t\epsilon$ and such that
\begin{equation*}
 D=K+\epsilon K.
\end{equation*}
Then $c:=\epsilon^2\in F^\times$. Let $\NN$ denote the reduced norm on $D$. Restricted to $K$ this is the norm of the extension $K/F$. One has for $\gamma_1+\epsilon\gamma_2\in D$
\begin{equation*}
 \NN(\gamma_1+\epsilon\gamma_2)=\NN(\gamma_1)-c\NN(\gamma_2),
\end{equation*}
as $\NN(\epsilon)=-\epsilon^2=-c$. Thus, $D$ splits exactily in the case $c\in\NN(K^\times)$.
With this notations, one can parametrize the double cosets $[\gamma]\in T\backslash G/T$ by the projective line:
\begin{defn}\label{def_P_funktion}
 Let $P:T\backslash G/T\rightarrow \mathbb P^1(F)$ be defined by 
\begin{equation*}
 P(\gamma_1+\epsilon\gamma_2):=\frac{c\NN(\gamma_2)}{\NN(\gamma_1)}
\end{equation*}
for $\gamma_1+\epsilon\gamma_2\in D^\times$.
\end{defn}
We check that this in fact is well-defined: $P(t(\gamma_1+\epsilon\gamma_2)t')=P(\gamma_1+\epsilon\gamma_2)$ for all $t,t'\in K^\times$. The non-empty fibres of $P$ not belonging to $0$ or $\infty$ are exactly the nondegenerate double cosets.
In case that  $K/F$ is a field extension, $P$ is injective with range $c\NN(K^\times)\cup\{0,\infty\}$.
In case $K/F$ split, the range of $P$ is $F^\times\backslash\{1\}\cup\{0,\infty\}$ and the fibres of $F^\times\backslash\{1\}$ are single double cosets (\cite{jacquet}).

Of course this is just one possibility of parametrization. Zhang~\cite{zhang} (Chapter~4) for example uses $\xi:=\frac{P}{P-1}$ to which we will come back eventually.
\begin{lem}
(\cite{zhang} Chapter~4) Let $\gamma\in D^\times$. In each double coset $T\gamma T$ of $G$ there exists exactly one $T$-conjugacy class of trace zero.
\end{lem}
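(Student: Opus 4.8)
The plan is to coordinatise the double coset $T\gamma T$ and its $T$-conjugacy classes by means of the decomposition $D=K+\epsilon K$, and then to read off ``trace zero'' as a single linear condition on one coordinate. Throughout I treat the nondegenerate case (the only one used in the sequel), i.e.\ $\gamma$ does not normalise $T$, equivalently $P(\gamma)\notin\{0,\infty\}$, equivalently $\gamma=\gamma_1+\epsilon\gamma_2$ with $\gamma_1,\gamma_2\in K^\times$; for $\gamma$ normalising $T$ the claim follows from a direct look at $T\gamma T$. Observe first that ``of trace zero'' is well defined on $G=F^\times\backslash D^\times$, because rescaling a representative by $\lambda\in F^\times$ multiplies its reduced trace by $\lambda$.

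First I collect the elementary input. From $\epsilon t=\bar t\epsilon$ one gets $t\epsilon=\epsilon\bar t$ for $t\in K$, and $(\epsilon u)^2=\epsilon^2\bar uu=c\NN(u)\in F$, so every element of $\epsilon K$ has reduced trace $0$; hence $\operatorname{tr}(\gamma_1+\epsilon\gamma_2)=\operatorname{tr}_{K/F}(\gamma_1)$. Moreover, for $t,t'\in K^\times$,
\[
 t(\gamma_1+\epsilon\gamma_2)t'=tt'\gamma_1+\epsilon(\bar tt')\gamma_2 ,
\]
and conjugating, $t(s\gamma_1+\epsilon w)t^{-1}=s\gamma_1+\epsilon(\bar t/t)w$ for $s\in K^\times$, $w\in K$. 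Using Hilbert 90 for $K/F$ (in the split case this is the triviality that the norm-one subgroup of $K^\times=(F\oplus F)^\times$ acts transitively on each fibre of $\NN$), these two identities yield
\[
 T\gamma T=\{\,s\gamma_1+\epsilon w\ :\ s\in K^\times,\ w\in K,\ \NN(w)=\NN(s)\NN(\gamma_2)\,\}\big/F^\times ,
\]
and they show that conjugation by $K^\times$ fixes the $K$-component $s\gamma_1$ and moves $w$ through its entire norm-sphere $\{w':\NN(w')=\NN(w)\}$. As $\NN(w)$ is already prescribed by $s$, the $T$-conjugacy class of $s\gamma_1+\epsilon w$ inside $T\gamma T$ is completely determined by $s$; allowing the $F^\times$-scaling of $G$, the $T$-conjugacy classes of $T\gamma T$ are in bijection with $K^\times/F^\times$ via $s$.

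It remains to pin down which value of $s$ is forced by the trace condition. By the trace formula, $s\gamma_1+\epsilon w$ is of trace zero exactly when $\operatorname{tr}_{K/F}(s\gamma_1)=0$, i.e.\ when $s\gamma_1$ lies in the one-dimensional trace-zero $F$-subspace $F\tau$ of $K$, where $\tau=\sqrt A$ (field case) resp.\ $\tau=(1,-1)$ (split case). Since $\gamma_1\in K^\times$ this means $s\in F^\times\tau\gamma_1^{-1}$, a single class in $K^\times/F^\times$, and this class does occur: $\tau\gamma_1^{-1}\gamma=\tau+\epsilon\overline{\tau\gamma_1^{-1}}\gamma_2$ has reduced trace $\operatorname{tr}_{K/F}(\tau)=0$ and reduced norm $\NN(\tau)\NN(\gamma_1)^{-1}\NN(\gamma)\ne0$, hence is a trace-zero element of $D^\times$ in $T\gamma T$. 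Together with the previous paragraph, exactly one $T$-conjugacy class in $T\gamma T$ consists of trace-zero elements, which is the assertion.

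The only real work is the coordinate description of $T\gamma T$ and of the $T$-action, that is, the two applications of Hilbert 90; everything after that is linear algebra over $F$. I expect the split case $K=F\oplus F$ to cost an extra line of bookkeeping, and the degenerate cosets $P(\gamma)\in\{0,\infty\}$ (where $\gamma$ normalises $T$) to be dispatched separately by inspection.
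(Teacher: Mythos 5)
The paper does not prove this lemma itself --- it is quoted directly from Zhang's Chapter 4 --- so there is no ``paper's own proof'' to compare against; I assess the proposal on its merits. Your argument is correct and complete for nondegenerate double cosets, i.e.\ $P(\gamma)\notin\{0,\infty\}$, equivalently $\gamma_1,\gamma_2\in K^\times$. The two Hilbert~90 applications do exactly what is needed: the first identifies $T\gamma T$ (mod $F^\times$) with $\{\,s\gamma_1+\epsilon w:\NN(w)=\NN(s)\NN(\gamma_2)\,\}$, the second shows that $T$-conjugation fixes the $K$-component and sweeps out the full norm-sphere in $w$, so the $T$-conjugacy classes are parametrised by $s\in K^\times/F^\times$; imposing $\operatorname{tr}_{K/F}(s\gamma_1)=0$ then forces $s\gamma_1\in F^\times\tau$, a single class, and your explicit representative $\tau\gamma_1^{-1}\gamma\in T\gamma T$ shows the class is nonempty. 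This is the natural proof, and it is the one the paper would need since it only ever applies the lemma through Definition~\ref{def_lln}, i.e.\ for $x\in F^\times$.

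The one flaw is the throwaway remark that the degenerate cosets (where $\gamma$ normalises $T$) are ``dispatched separately by inspection.'' For $\gamma\in K^\times$ the claim does hold: $T\gamma T=T$ is abelian, so $T$-conjugation is trivial and the unique trace-zero class is $[\sqrt A]$ resp.\ $[(1,-1)]$. But for $\gamma\in\epsilon K^\times$ the whole double coset $\epsilon K^\times/F^\times$ is trace-free, and its $T$-conjugacy classes in $G$ are in bijection with $K^\times/(K^1F^\times)\cong\NN(K^\times)/(F^\times)^2$, which has two elements when $K/F$ is unramified and four when $K=F\oplus F$; so ``exactly one'' fails there. Thus the statement as literally written for all $\gamma\in D^\times$ needs the nondegeneracy hypothesis you are already using; you should restrict the claim rather than assert the degenerate case follows by inspection. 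Since the paper only ever instantiates the lemma at $x\in F^\times$, this does not propagate into any later error, but the sentence as written is wrong.
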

Now the local components $<\phi,\psi>_\gamma$ of the linking numbers can be declared precisely:
\begin{defn}\label{def_lln}
Let $\phi, \psi\in\mathcal S(\chi,G)$. For $x\in F^\times$ the {\bf local linking number} is defined by
\begin{equation*}
<\phi,\psi>_x:=<\phi,\psi>_{\gamma(x)}
\end{equation*}
if there is a tracefree preimage $\gamma(x)\in D^\times$ of $x$ under $P$. If there doesn't exist  a preimage, then $<\phi,\psi>_x:=0$. Thus, for $x\in c\NN:=c\NN(K^\times)$
\begin{equation*}
<\phi,\psi>_x=\int_{T\backslash G}\int_T\phi(t^{-1}\gamma(x)ty)~dt~\bar\psi(y)~dy.
\end{equation*}
\end{defn}
Notice that this definition is independent of the choice of the element $\gamma(x)$ of trace zero by  unimodularity of the Haar measure on $T$.

There is one general assumption on the character $\chi$ which  will be assumed in all the following.
The conductor of $\chi$ and the discriminant of $K/F$ shall be coprime:
\begin{assumption}\label{voraussetzung_an_chi}
The character $\chi$ of $T$ may only be ramified  if $\omega$ is not.
%The character $\chi$ of $T=F^\times\backslash K^\times$ may only be ramified  if $\omega$, that is $K/F$, is not.
\end{assumption}
The conductor $f(\chi)<\mathbf o_K$ of $\chi$ can be viewed as an ideal of $\mathbf o_F$: If $K=F\oplus F$, then $\chi=(\chi_1,\chi_1^{-1})$ for a character $\chi_1$ of $F^\times$ and $f(\chi)=f(\chi_1)$. If $K/F$ is a ramified field extension, then $\chi$ is unramified, thus $f(\chi)\cap\mathbf o_F=\mathbf o_F$. Lastly, if $K/F$ is an unramified field extension, then $f(\chi)=\pi^{c(\chi)}\mathbf o_K$, where $\pi$ is an uniformizing element for $K$ as well as $F$. That is, $f(\chi)\cap\mathbf o_F=\pi^{c(\chi)}\mathbf o_F$.

There are some simple properties for $\chi$ following from the hypothesis.
\begin{lem}\label{lemma_chi_quadratisch}
Let $\chi$ be as in \ref{voraussetzung_an_chi}. Equivalent are:

(a) $\chi$ is quadratic.

(b) $\chi$ factorizes via the norm.
\end{lem}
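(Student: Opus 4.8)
The plan is to show the two implications $(a)\Rightarrow(b)$ and $(b)\Rightarrow(a)$ separately, splitting according to whether $K/F$ is split or a field extension, and using Hypothesis~\ref{voraussetzung_an_chi} to control ramification. The implication $(b)\Rightarrow(a)$ is the easy direction: if $\chi=\eta\circ\NN$ for some character $\eta$ of $F^\times$, then since $\chi$ is trivial on $F^\times$ (it is a character of $\mathbf{T}(F)=F^\times\backslash K^\times$), for $x\in F^\times$ we get $1=\chi(x)=\eta(\NN(x))=\eta(x^2)=\eta(x)^2$, so $\eta^2=1$ on $F^\times$; hence $\chi^2=\eta^2\circ\NN=1$, i.e. $\chi$ is quadratic. (In the split case $K=F\oplus F$, $\NN(x,y)=xy$ and the embedding of $F^\times$ is the diagonal, and the same computation applies.)

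For $(a)\Rightarrow(b)$ I would argue as follows. In the split case $K=F\oplus F$ we have $\chi=(\chi_1,\chi_1^{-1})$ for a character $\chi_1$ of $F^\times$, and $\chi$ quadratic means $\chi_1^2=1$; then setting $\eta:=\chi_1$ we have $\eta\circ\NN(x,y)=\chi_1(xy)=\chi_1(x)\chi_1(y)=\chi_1(x)\chi_1^{-1}(y)^{-1}$... — more carefully, $\eta(\NN(x,y))=\chi_1(xy)$ while $\chi(x,y)=\chi_1(x)\chi_1(y)^{-1}=\chi_1(x)\chi_1(y)$ using $\chi_1^2=1$, and these agree, so $\chi=\eta\circ\NN$. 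In the field case, $\chi$ quadratic means $\chi$ is a character of $K^\times/(F^\times (K^\times)^2)$ of order dividing $2$; by local class field theory quadratic characters of $K^\times$ correspond to index-$2$ subgroups, equivalently to elements of $F^\times/\NN(K^\times)\cong\{\pm1\}$ via... — the cleanest route is: $\chi$ trivial on $F^\times$ and quadratic forces $\chi$ to be trivial on $(K^\times)^2 F^\times$; since $\NN:K^\times\to\NN(K^\times)\subseteq F^\times$ has kernel the norm-one elements $K^1$, and $K^1\cdot F^\times$ has index $2$ in $K^\times$ (as $K^\times/K^1F^\times\hookrightarrow F^\times/\NN(K^\times)$, and by Hilbert~90 plus local norm index this is exactly $\{\pm1\}$), the character $\chi$, being trivial on $F^\times$ and quadratic, descends through $\NN$: one checks $\ker\NN=K^1\subseteq\ker\chi$ because $\chi|_{K^1}$ is a quadratic character of the (pro-cyclic, or compact) group $K^1$ whose square-classes are already killed, and then uses Hypothesis~\ref{voraussetzung_an_chi} to rule out the ramified quadratic character of $K^1$ when $\omega$ is ramified. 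This produces $\eta$ on $\NN(K^\times)$, which extends to $F^\times$.

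The main obstacle I expect is the field-extension case, specifically verifying that $\chi$ is trivial on the norm-one subgroup $K^1$. One has $K^\times/F^\times(K^\times)^2$ and $K^\times/K^1F^\times$; the point is that $\NN$ induces an isomorphism $K^\times/K^1 \cong \NN(K^\times)$ and one must see $\chi$ factors through this, i.e. $K^1\subseteq\ker\chi$. Since $\chi^2=1$, $\chi$ kills $(K^1)^2$, so the obstruction lives in $K^1/(K^1)^2(K^1\cap F^\times)=K^1/(K^1)^2\{\pm1\}$; when $K/F$ is unramified this group is trivial (norm-one units form a pro-$p$ group times a cyclic group of order prime to... actually $K^1\cong$ the units of norm one, which for unramified $K/F$ is $\cong \mu_{q+1}\times(\text{pro-}p)$, so its $2$-torsion quotient may be nontrivial when $q$ is odd) — so here Hypothesis~\ref{voraussetzung_an_chi}, which forbids $\chi$ from being ramified when $\omega$ is unramified i.e. when $K/F$ is unramified... wait, $\omega$ unramified $\iff$ $K/F$ unramified, and then $\chi$ \emph{is} forced unramified, which kills the unit contribution — must be invoked to eliminate exactly the potentially-troublesome quadratic characters. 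I would organize the field case so that the ramified/unramified dichotomy for $K/F$ is handled by the hypothesis at precisely this step, reducing the claim to the tautology that an unramified quadratic character of $K^\times$ trivial on $F^\times$ is $\omega\circ\NN$-type, hence factors through $\NN$.
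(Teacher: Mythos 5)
Your direction $(b)\Rightarrow(a)$ is correct, and the split case of $(a)\Rightarrow(b)$ is correct. The field-extension case of $(a)\Rightarrow(b)$, however, has a genuine gap, and the patch you attempt via Hypothesis~\ref{voraussetzung_an_chi} both misreads that hypothesis and would not close the gap even if read correctly.

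On the misreading: Hypothesis~\ref{voraussetzung_an_chi} says $\chi$ may be ramified \emph{only if} $\omega$ is unramified. Contrapositively, $K/F$ ramified forces $\chi$ unramified, while $K/F$ unramified leaves $\chi$ unconstrained. Near the end you assert the reverse (``$\omega$ unramified $\iff K/F$ unramified, and then $\chi$ \emph{is} forced unramified''), which is false. So the hypothesis does not rescue the unramified field case, which is exactly where you invoke it; in fact it plays no role in this lemma at all (it is needed only for Corollary~\ref{cor_chi}).

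More fundamentally, you correctly identify the crux --- that one must show $\chi|_{K^1}=1$ for $K^1=\ker\NN$ --- but the justification you sketch (``a quadratic character of $K^1$ whose square-classes are already killed'') does not establish it: a quadratic character of $K^1$ is precisely one trivial on $(K^1)^2$, and as you yourself observe $K^1/(K^1)^2\{\pm1\}$ can be nontrivial, so killing squares does not kill $K^1$. The missing idea is to exploit the Galois action rather than square-class counting. For $t\in K^\times$ one has $t\bar t=\NN(t)\in F^\times$, hence $\chi(t)\chi(\bar t)=1$, i.e.\ $\chi(\bar t)=\chi(t)^{-1}$. If $\chi^2=1$ this becomes $\chi(\bar t)=\chi(t)$, so $\chi$ is Galois-invariant. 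By Hilbert~90 every element of $K^1$ has the form $t/\bar t$, so $\chi(t/\bar t)=\chi(t)\chi(\bar t)^{-1}=1$. Thus $\chi$ factors through $K^\times/K^1\cong\NN(K^\times)\subseteq F^\times$, and extending the resulting character of $\NN(K^\times)$ to $F^\times$ yields $\eta$ with $\chi=\eta\circ\NN$. This closes $(a)\Rightarrow(b)$ uniformly in the ramified and unramified field cases, with no appeal to Hypothesis~\ref{voraussetzung_an_chi}.
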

\begin{cor}\label{cor_chi}
Assume \ref{voraussetzung_an_chi}.
If  $K/F$ is a ramified field extension, then  the character $\chi$ is quadratic.

If for  an unramified field extension  $K/F$ the charcacter $\chi$ is unramified, then $\chi=1$.
\end{cor}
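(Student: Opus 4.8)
The final statement is Corollary~\ref{cor_chi}, which I will prove using Lemma~\ref{lemma_chi_quadratisch} and the discussion of conductors.

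\textbf{Plan of proof.} The plan is to treat the two assertions separately, in each case reducing to the equivalence (a)$\Leftrightarrow$(b) of Lemma~\ref{lemma_chi_quadratisch} together with the description of the conductor $f(\chi)$ given just before the corollary.

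For the first assertion, suppose $K/F$ is a ramified field extension. By Hypothesis~\ref{voraussetzung_an_chi}, since $\omega$ is ramified (the quadratic character attached to a ramified extension is itself ramified), $\chi$ must be unramified. The key point is then that on the norm group $\NN(K^\times)$, which in the ramified case is an index-$2$ subgroup of $F^\times$ containing $\mathbf o_F^\times$ (as $\mathbf o_F^\times \subseteq \NN(\mathbf o_K^\times)$ when $K/F$ is ramified) together with $\pi^2$ or $\pi$ depending on normalization — in any case an unramified character of $K^\times$ factors through $K^\times/\mathbf o_K^\times$, which the norm maps onto the value group. Concretely: an unramified character $\chi$ of $T = F^\times\backslash K^\times$ is determined by its value on a uniformizer $\pi_K$ of $K$; since $K/F$ is ramified we may take $\pi_K^2 \in \pi_F\mathbf o_F^\times$, and triviality of $\chi$ on $F^\times$ forces $\chi(\pi_K)^2 = \chi(\pi_K^2) = \chi(\text{unit}\cdot\pi_F) = 1$. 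Hence $\chi^2 = 1$, i.e. $\chi$ is quadratic. (Alternatively, and perhaps more cleanly: $[K^\times : F^\times \mathbf o_K^\times] = 2$ in the ramified case, so a character trivial on $F^\times$ and on $\mathbf o_K^\times$ has order dividing $2$.)

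For the second assertion, suppose $K/F$ is an unramified field extension and $\chi$ is unramified. Then $\chi$ is trivial on $\mathbf o_K^\times$, and since $K/F$ is unramified we may choose the uniformizer of $K$ to be $\pi = \pi_F$ itself; triviality of $\chi$ on $F^\times$ then gives $\chi(\pi) = 1$. As $K^\times = \pi^{\ZZ}\mathbf o_K^\times$, we conclude $\chi = 1$. Equivalently, in the unramified case $F^\times \mathbf o_K^\times = K^\times$, so the only character of $T = F^\times\backslash K^\times$ trivial on the image of $\mathbf o_K^\times$ is the trivial one; and an unramified $\chi$ is by definition trivial on $\mathbf o_K^\times$.

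\textbf{Main obstacle.} There is no serious obstacle here; the corollary is essentially a bookkeeping consequence of local class field theory for the extension $K/F$ (the structure of $K^\times/F^\times$ and of the norm group) combined with Hypothesis~\ref{voraussetzung_an_chi}. The only point requiring a little care is matching conventions for uniformizers in the ramified versus unramified cases — in particular the fact that in the unramified case one uniformizer serves for both $K$ and $F$, whereas in the ramified case $\pi_K^2$ differs from $\pi_F$ only by a unit — and confirming that "unramified $\chi$" in the sense used in the paper means exactly "trivial on $\mathbf o_K^\times$", so that the conductor computation preceding the corollary applies verbatim. Once these conventions are pinned down, both statements follow in a line or two, with Lemma~\ref{lemma_chi_quadratisch} available as an alternative route for the first assertion (quadratic $\Leftrightarrow$ factors through the norm, and in the ramified case $\chi$ being unramified is readily seen to factor through $\NN$).
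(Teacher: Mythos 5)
Your proof is correct, and since the paper states Corollary~\ref{cor_chi} without a written argument (it is left as an immediate consequence of Hypothesis~\ref{voraussetzung_an_chi} and the structure of $F^\times\backslash K^\times$), your route is exactly the intended one: in the ramified case $\omega$ is ramified, so the hypothesis forces $\chi$ unramified, and then $[K^\times:F^\times\mathbf o_K^\times]=2$ together with $\pi_K^2\in\pi_F\mathbf o_K^\times$ gives $\chi^2=1$; in the unramified case $K^\times=F^\times\mathbf o_K^\times$, so an unramified $\chi$ trivial on $F^\times$ is trivial. One slip to remove: the parenthetical claim that $\mathbf o_F^\times\subseteq\NN(\mathbf o_K^\times)$ when $K/F$ is ramified is false — for odd residue characteristic it is precisely in the ramified case that $\NN(\mathbf o_K^\times)$ has index $2$ in $\mathbf o_F^\times$, while $\NN(\mathbf o_K^\times)=\mathbf o_F^\times$ holds in the unramified case. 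Fortunately that remark is never used: your actual computation $\chi(\pi_K)^2=\chi(\pi_K^2)=\chi(u\,\pi_F)=1$ needs only that $\chi$ is trivial on $\mathbf o_K^\times$ and on $F^\times$, so simply delete the parenthesis.
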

One concluding remark on Haar measures is in due. Let $da$ be a nontrivial additive Haar measure on $F$. Associated volumes are denoted by $\vol$. The measure $d^\times a$ of the multiplicative group $F^\times$ shall be compatible with $da$. Associated volumes are denoted by $\vol^\times$. Thus,
\begin{equation*}
\vol^\times(\mathbf o_F^\times)=(1-q^{-1})\vol(\mathbf o_F^\times).
\end{equation*}
The measure on $T\backslash G$ shall be the quotient measure induced of those on $G$ and $T$.
%%%%%%%%%%%%%%%%%%%%%%%%%%
%%%
% %%%
% %%%'
%%%%%%%%%%%%%%%%%%%%%%%%%%%%%

\subsection{Automorphic forms}\label{section_autom_formen_hintergrund}
The central object on the automorphic side is the Rankin-Selberg convolution of two automorphic representations.
The Gross-Zagier formula is interested in the central order of its $L$-function.

Let $\Pi_1$ be a cuspidal representation of $\operatorname{GL}_2(\mathbb A_{\mathbb F})$ with trivial central character (i.e. an irreducible component of the discrete spectrum of the right translation on $L^2(\operatorname{GL}_2(\mathbb F)\backslash \operatorname{GL}_2(\mathbb A_{\mathbb F})),1))$ and conductor $N$.

Further, let $\Pi(\chi)$ be the irreducible component belonging to $\chi$ of the Weil representation  of $\operatorname{GL}_2(\mathbb A_{\mathbb F})$ for the norm form of $\mathbb K/\mathbb F$ (e.g.~\cite{gelbart}~\S 7). It has conductor $f(\chi)^2f(\omega)$ and central character $\omega$.

The Rankin-Selberg convolution of $\Pi_1$ and $\Pi(\chi)$ produces (see~\cite{jacquet2}) the (local) Mellin transform
\begin{equation*}
 \Psi(s,W_1,W_2,\Phi)=\int_{Z(F)N(F)\backslash\operatorname{GL}_2(F)}W_1(g)W_2(eg)f_\Phi(s,\omega,g)~dg
\end{equation*}
for Whittaker functions $W_1$ of $\Pi_1$ (resp.~$W_2$ of $\Pi(\chi)$) for an arbitrary nontrivial character of $F$. One defines $e:=\begin{pmatrix}-1&0\\0&1\end{pmatrix}$. In here, the Eisenstein series
\begin{equation*}
 f_\Phi(s,\omega,g)=\lvert\det g\rvert^s\int_{F^\times}\Phi\left((0,t)g\right)\lvert t\rvert^{2s}\omega(t)~d^\times t
\end{equation*}
for a function $\Phi\in\mathcal S(F^2)$ occures. $f_\Phi$ is an element of the principal series $\Pi(\lvert\cdot\rvert^{s-\frac{1}{2}},\omega\lvert\cdot\rvert^{\frac{1}{2}-s})$. Of course, there is an ad\`elic analogon of this.
Analytical continuation of $\Psi$ leads to the $L$-function, the greatest common divisor of all $\Psi$. It is defined by newforms $\phi$ for $\Pi_1$ and $\theta_\chi$ of $\Pi(\chi)$ as well as a special form $E$ of $\Pi(\lvert\cdot\rvert^{s-\frac{1}{2}},\omega\lvert\cdot\rvert^{\frac{1}{2}-s})$:
\begin{eqnarray*}
 L(s,\Pi_1\times\Pi(\chi)) 
&=& \int_{Z(\mathbb A_{\mathbb F})\operatorname{GL}_2(\mathbb F)\backslash\operatorname{GL}_2(\mathbb A_{\mathbb F})}\phi(g)\theta_\chi(g)E(s,g)~dg\\
&=& \int_{Z(\mathbb A_{\mathbb F})\operatorname{GL}_2(\mathbb F)\backslash\operatorname{GL}_2(\mathbb A_{\mathbb F})}W_\phi(g)W_{\theta_\chi}(g)f_E(s,\omega,g)~dg,
\end{eqnarray*}
where $W_\phi$ etc. denotes the associated Whittaker function. This $L$-function satisfies the functional equation
\begin{equation*}
 L(s,\Pi_1\times\Pi(\chi))=\epsilon(s,\Pi_1\times\Pi(\chi))L(1-s,\Pi_1\times\Pi(\chi)),
\end{equation*}
as $\Pi_1$ and $\Pi(\chi)$ are selfdual. For places where $c(\chi)^2c(\omega)\leq v(N)$, the form $E$ (resp. $W_E$) is the newform of the Eisenstein series. In \cite{zhang} (chap.~1.4) an integral kernel $\Xi(s,g)$ is constructed which has a functional equation analogous to that of $L$ and for which
\begin{equation*}
 L(s,\Pi_1\times\Pi(\chi))=\int_{Z(\mathbb A_{\mathbb F})\operatorname{GL}_2(\mathbb F)\backslash\operatorname{GL}_2(\mathbb A_{\mathbb F})}\phi(g)\Xi(s,g)~dg.
\end{equation*}
We remark, that such a kernel depends on the newform of the theta series $\Pi(\chi)$ as well as the Eisenstein series, but not on the special choice of $\Pi_1$. While the construction of the kernel shall not be reported here, its local nonconstant Fourier coefficients are defined by
\begin{equation}\label{def_fourierkoeff}
 W(s,\xi,\eta,g):=W_\theta(\begin{pmatrix}\eta&0\\0&1\end{pmatrix}g) W_E(s,\begin{pmatrix}\xi&0\\0&1\end{pmatrix}g).
\end{equation}
Here $\eta:=1-\xi$. These Fourier coefficients are exactly those analytic functions which are compared to special local linking numbers in the local Gross-Zagier formula (\cite{zhang} Lemma~4.3.1).
In this paper, the restriction to newforms in (\ref{def_fourierkoeff}) will be cided. For this, one looks at the Kirillov models of the representations: Starting from the Whittaker model $\mathcal W(\Pi,\psi)$ of an irreducible admissible representation $\Pi$ for an additive character $\psi$, the Kirillov space $\mathcal K(\Pi)$ is given by
\begin{eqnarray*}
 \mathcal W(\Pi,\psi) &\to& \mathcal K(\Pi),\\
W&\mapsto& k:(a\mapsto W\begin{pmatrix}a&0\\0&1\end{pmatrix}).
\end{eqnarray*}
\begin{prop}\label{prop_kirillov}
 (\cite{godement}, I.36)
Let $\Pi$ be an infinite dimensional irreducible admissible representation of $\operatorname{GL}_2(F)$. The Kirillov space $\mathcal K(\Pi)$ is generated by the Schwartz space $\mathcal S(F^\times)$ along with the following stalks around zero:

(a) If $\Pi$ is supercuspidal, this stalk is zero.

(b) If $\Pi=\Pi(\mu_1,\mu_2)$ is a principle series representation, then it is given by representatives of the form
\begin{itemize}
 \item[$\bullet$] $\left(\lvert a\rvert^{\frac{1}{2}}c_1\mu_1(a)+\lvert a\rvert^{\frac{1}{2}}c_2\mu_2(a)\right)\mathbf 1_{\wp^n}(a)$, if $\mu_1\not=\mu_2$,
\item[$\bullet$]
$\lvert a\rvert^{\frac{1}{2}}\mu_1(a)\left(c_1+c_2v(x)\right)\mathbf 1_{\wp^n}(a)$, if $\mu_1=\mu_2$.
\end{itemize}
Here $c_1,c_2\in\mathbb C$.

(c) If $\Pi=\Pi(\mu_1,\mu_2)$ is special, it is given by representatives
\begin{itemize}
 \item[$\bullet$] $\lvert a\rvert^{\frac{1}{2}}\mu_1(a)\mathbf 1_{\wp^n}(a)$, if $\mu_1\mu_2^{-1}=\lvert \cdot\rvert$,
\item[$\bullet$]
$\lvert a\rvert^{\frac{1}{2}}\mu_2(a)\mathbf 1_{\wp^n}(a)$, if $\mu_1\mu_2^{-1}=\lvert\cdot\rvert^{-1}$.
\end{itemize}
\end{prop}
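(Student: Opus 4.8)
The plan is to read the statement off from the structure of the Kirillov model $\mathcal K(\Pi)$ as a module over the mirabolic subgroup $\left\{\begin{pmatrix}a&b\\0&1\end{pmatrix}\right\}$ of $\operatorname{GL}_2(F)$, on which it acts by $\left(\Pi\begin{pmatrix}a&b\\0&1\end{pmatrix}k\right)(x)=\psi(bx)\,k(ax)$. Two elementary remarks reduce everything to the behaviour near $0$: every $k\in\mathcal K(\Pi)$ is locally constant on $F^\times$, and $k(x)=0$ for $|x|$ large --- the latter because $k$ is fixed by $\begin{pmatrix}1&b\\0&1\end{pmatrix}$ for all $b$ in some fractional ideal, so $\psi(bx)=1$ wherever $k(x)\neq0$, which bounds $|x|$ from above. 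Hence the whole content lies in the quotient $J:=\mathcal K(\Pi)/\mathcal S(F^\times)$ together with the action on it of the diagonal torus, $k(x)\mapsto k(ax)$.

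The structural input is the short exact sequence of mirabolic modules
\begin{equation*}
0\longrightarrow\mathcal S(F^\times)\longrightarrow\mathcal K(\Pi)\longrightarrow J\longrightarrow 0 .
\end{equation*}
The inclusion $\mathcal S(F^\times)\subseteq\mathcal K(\Pi)$ I would prove by hand: for any $k$ with $k(1)\neq0$, integrating $\psi(-b)\,\Pi\begin{pmatrix}1&b\\0&1\end{pmatrix}k$ over a large enough fractional ideal in $b$ yields a positive multiple of $k\cdot\mathbf 1_{1+\wp^m}$, the characteristic function of a small neighbourhood of $1$, and translating it by $\begin{pmatrix}a&0\\0&1\end{pmatrix}$ sweeps out all of $\mathcal S(F^\times)$. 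Since $\left(\psi(b\,\cdot)-1\right)k=\Pi\begin{pmatrix}1&b\\0&1\end{pmatrix}k-k$ is supported away from $0$, hence lies in $\mathcal S(F^\times)$, the unipotent $N$ acts trivially on $J$, so $J$ is a smooth $F^\times$-module. What I would then import from the standard theory of Whittaker and Kirillov models (Jacquet--Langlands, or the Bernstein--Zelevinsky restriction to the mirabolic; in the present form essentially \cite{godement}) is that $J$ is a fixed twist of the Jacquet module of $\Pi$ (its first derivative), so that $J$ is finite-dimensional of dimension $0$, $1$, $2$ according as $\Pi$ is supercuspidal, special, or an irreducible principal series, and the generalised $F^\times$-eigencharacters of $J$ --- the ``exponents'' of $\Pi$ --- are: none when $\Pi$ is supercuspidal; one of $|a|^{1/2}\mu_1(a)$, $|a|^{1/2}\mu_2(a)$ when $\Pi$ is special; and both of them when $\Pi=\Pi(\mu_1,\mu_2)$ is an irreducible principal series.

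Granting this, the asymptotic shapes fall out of the torus action on $J$. If $[k]\in J$ is an eigenvector for $a\mapsto|a|^{1/2}\mu(a)$, then $k(ax)\equiv|a|^{1/2}\mu(a)\,k(x)\pmod{\mathcal S(F^\times)}$, and taking $x$ small while letting $a$ vary forces $k(x)=c\,|x|^{1/2}\mu(x)$ for $|x|$ small, up to a Schwartz function; this gives case (a) (empty stalk, $J=0$), the first bullet of (b), and case (c) once the surviving exponent is identified. When $\mu_1=\mu_2$ in (b), $J$ is a single two-dimensional indecomposable block for $|\cdot|^{1/2}\mu_1$; since any smooth homomorphism from $F^\times$ to the additive group $\mathbb C$ is a scalar multiple of the valuation $v$, the only such block has $\begin{pmatrix}a&0\\0&1\end{pmatrix}$ act by $|a|^{1/2}\mu_1(a)\begin{pmatrix}1&v(a)\\0&1\end{pmatrix}$, which forces $k(x)=|x|^{1/2}\mu_1(x)\left(c_1+c_2v(x)\right)$ near $0$ --- the logarithmic term. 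For a special $\Pi$, $\Pi(\mu_1,\mu_2)$ is reducible with a one-dimensional constituent $\mu\circ\det$, and the standard sub/quotient analysis of that reducible principal series shows that the exponent with the extra factor of $|\cdot|$ (the one decaying faster at $0$) is the one inherited by the infinite-dimensional constituent, i.e. $|a|^{1/2}\mu_1(a)$ when $\mu_1\mu_2^{-1}=|\cdot|$ and $|a|^{1/2}\mu_2(a)$ when $\mu_1\mu_2^{-1}=|\cdot|^{-1}$; this is (c). Finally, surjectivity of $\mathcal K(\Pi)\to J$ realises all $c_i\in\mathbb C$, and $n$ may be enlarged at will by subtracting a Schwartz function, so $\mathcal K(\Pi)$ is indeed generated by $\mathcal S(F^\times)$ together with the listed representatives.

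The one substantive ingredient is the identification of $J$ with a twist of the Jacquet module (the Bernstein--Zelevinsky / Jacquet--Langlands description of $\Pi$ restricted to the mirabolic subgroup), which I would simply quote; everything after it is bookkeeping with the $F^\times$-action, the only slightly delicate points being the $v(x)$ term in the non-semisimple case $\mu_1=\mu_2$ and the identification of the surviving exponent for special $\Pi$, both of which follow from the reducibility analysis of $\Pi(\mu_1,\mu_2)$. If one prefers a fully explicit proof, in keeping with the computational flavour of the rest of this paper, one can instead work in the induced model, compute the Whittaker functions by the Jacquet integral $W(g)=\int_F f\!\left(\begin{pmatrix}0&1\\-1&0\end{pmatrix}\begin{pmatrix}1&x\\0&1\end{pmatrix}g\right)\psi(-x)\,dx$, and extract the behaviour of $W\begin{pmatrix}a&0\\0&1\end{pmatrix}$ as $|a|\to0$ by splitting the integral at $|x|\sim|a|^{-1}$; the double pole at $\mu_1=\mu_2$ of the elementary local factor that appears is exactly what produces the $v(x)$ term.
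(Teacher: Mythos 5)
The paper does not prove this proposition; it cites it to Godement's notes on Jacquet--Langlands (\cite{godement}, I.36), and your proposal is a correct reconstruction of exactly that standard argument: the short exact sequence $0\to\mathcal S(F^\times)\to\mathcal K(\Pi)\to J\to 0$ of mirabolic modules, the identification of $J$ with a twist of the Jacquet module, and the reading-off of the asymptotic shapes from the torus action on $J$ (with the $v(x)$ term coming from the unique non-semisimple extension when $\mu_1=\mu_2$, and the surviving exponent for the special representation from the reducibility analysis of $\Pi(\mu_1,\mu_2)$). Since the paper offers no proof of its own, there is nothing to contrast; your sketch is the canonical one and matches the cited source.
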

Now one defines the so called Whittaker products which are products of Kirillov functions actually. The name keeps in mind the origin of these functions as Fourier coefficients.
\begin{defn}\label{def_whittaker_prod}
 Let (locally) $\Pi(\chi)$ be the theta series and $\Pi(1,\omega)$ be the Eisenstein series at the central place $s=\frac{1}{2}$. Then the products
\begin{equation*}
 W(\xi,\eta)=W_\theta(\eta)W_E(\xi)
\end{equation*}
of Kirillov functions $W_\theta\in\mathcal K(\Pi(\chi))$ and $W_E\in\mathcal K(\Pi(1,\omega))$ %for $\eta=1-\xi$
 are called {\bf Whittaker products}.
\end{defn}
Being a component of a Weil representation the theta series $\Pi(\chi)$  is completly described (\cite{jacquet-langlands} \S 1, \cite{gelbart} \S 7). Ad\`elically, it is a Hilbert modular form of conductor $f(\chi)^2f(\omega)$ and of weight $(1,\dots,1)$ at the infinite places. If $K=F\oplus F$ is split, then $\chi=(\chi_1,\chi_1^{-1})$ and $\Pi(\chi)=\Pi(\chi_1,\omega\chi_1^{-1})=\Pi(\chi_1,\chi_1^{-1})$ is a principle series representation. If $K/F$ is a field extension and $\chi$ does not factorize via the norm, then $\Pi(\chi)$ is supercuspidal. While if $\chi=\chi_1\circ\NN$ it is the principle series representation $\Pi(\chi_1,\chi_1^{-1}\omega)=\Pi(\chi_1,\chi_1\omega)$, as $\chi_1^2=1$ by Lemma~\ref{lemma_chi_quadratisch}.
Thus, by Proposition~\ref{prop_kirillov}:
\begin{prop}\label{prop_characterisierung_theta_functionen}
 Let $\Pi(\chi)$ be the theta series.

(a) If $K/F$ is a quadratic field extension and $\chi$ is not quadratic, then the Kirillov space $\mathcal K(\Pi(\chi))$ is given by $\mathcal S(F^\times)\cup\{0\}$.

(b) If $K/F$ is a quadratic field extension and $\chi^2=1$, then the Kirillov space $\mathcal K(\Pi(\chi))$ as a function space in one variable $\eta$ is generated by $\mathcal S(F^\times)$ along with functions around zero of the form
\begin{equation*}
 \lvert\eta\rvert^{\frac{1}{2}}\chi_1(\eta)\left(a_1+a_2\omega(\eta)\right).
\end{equation*}
(c) If $K/F$ is split, then the Kirillov space $\mathcal K(\Pi(\chi))$ as a function space in one variable $\eta$ is generated by $\mathcal S(F^\times)$ along with functions around zero of the form
\begin{itemize}
 \item [$\bullet$] $\lvert\eta\rvert^{\frac{1}{2}}\left(a_1\chi_1(\eta)+a_2\chi_1^{-1}(\eta)\right)$, if $\chi_1^2\not=1$,
\item[$\bullet$] $\lvert\eta\rvert^{\frac{1}{2}}\chi_1(\eta)\left(a_1+a_2v(\eta)\right)$, if $\chi_1^2=1$.
\end{itemize}
\end{prop}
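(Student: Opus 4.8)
The plan is to deduce the statement directly from two facts recalled just above: the explicit identification of the theta series $\Pi(\chi)$ as an admissible representation of $\GL_2(F)$, and Godement's description of Kirillov spaces (Proposition~\ref{prop_kirillov}). The first step is to observe that in each of the three cases $\Pi(\chi)$ is infinite-dimensional, irreducible and admissible, so that Proposition~\ref{prop_kirillov} applies; its conclusion is that $\mathcal K(\Pi(\chi))$ is $\mathcal S(F^\times)$ together with a stalk at $0$ whose form depends only on the type of $\Pi(\chi)$. Here the Kirillov variable is renamed $\eta$ to agree with the Whittaker-product notation of Definition~\ref{def_whittaker_prod}.

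Then I would run through the three cases. For (a): ``$\chi$ not quadratic'' means, by Lemma~\ref{lemma_chi_quadratisch}, that $\chi$ does not factor through the norm, so $\Pi(\chi)$ is supercuspidal and Proposition~\ref{prop_kirillov}(a) kills the stalk, giving $\mathcal K(\Pi(\chi)) = \mathcal S(F^\times)\cup\{0\}$. For (b): $\chi^2 = 1$ forces $\chi = \chi_1\circ\NN$ with $\chi_1^2 = 1$ (Lemma~\ref{lemma_chi_quadratisch}), hence $\Pi(\chi) = \Pi(\chi_1,\chi_1\omega)$; since $\omega\neq 1$ for a field extension the two inducing characters are distinct, and the first bullet of Proposition~\ref{prop_kirillov}(b) gives stalk representatives $\lvert\eta\rvert^{\frac{1}{2}}(c_1\chi_1(\eta) + c_2\chi_1(\eta)\omega(\eta))\mathbf 1_{\wp^n}(\eta) = \lvert\eta\rvert^{\frac{1}{2}}\chi_1(\eta)(c_1 + c_2\omega(\eta))\mathbf 1_{\wp^n}(\eta)$, which is the claimed shape. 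For (c): in the split case $\omega = 1$ and $\Pi(\chi) = \Pi(\chi_1,\chi_1^{-1})$, so one distinguishes $\chi_1^2\neq 1$ (two distinct inducing characters $\mu_1 = \chi_1$, $\mu_2 = \chi_1^{-1}$, first bullet of Proposition~\ref{prop_kirillov}(b)) from $\chi_1^2 = 1$ (equal inducing characters, second bullet, producing the $v(\eta)$-term).

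The only point beyond bookkeeping is verifying that Proposition~\ref{prop_kirillov} genuinely applies and that case (c) of that proposition (the special representations) never occurs, i.e.\ that $\Pi(\chi)$ is always infinite-dimensional, irreducible and not Steinberg-type. This follows from unitarity of $\chi$, hence of $\chi_1$: in each case the ratio of the two inducing characters is a unitary character (namely $\omega$, or $\chi_1^2$), so it is never $\lvert\cdot\rvert^{\pm1}$, which both ensures irreducibility of the principal series and rules out the special and finite-dimensional constituents. Once this is in place, matching the normalisations of the stalk representatives in Proposition~\ref{prop_kirillov} with those in the statement is immediate, and I expect no further obstacle.
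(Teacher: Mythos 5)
Your proposal is correct and follows essentially the same route as the paper: identify $\Pi(\chi)$ as supercuspidal or as the explicit principal series $\Pi(\chi_1,\chi_1\omega)$ resp.\ $\Pi(\chi_1,\chi_1^{-1})$ via Lemma~\ref{lemma_chi_quadratisch}, then read off the stalks from Proposition~\ref{prop_kirillov}. Your additional check that the unitary ratio of the inducing characters rules out the special and finite-dimensional cases is a point the paper leaves implicit, but it changes nothing in the argument.
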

For later use we collect some properties of principal series. For an automorphic form $f\in \Pi(\mu_1\lvert\cdot\rvert^{s-\frac{1}{2}},\mu_2\lvert\cdot\rvert^{\frac{1}{2}-s})$ there is $\phi\in\mathcal S(F^2)$ such that 
\begin{equation}\label{funktion_principal_series}
 f(s,g)=\mu_1(\det g)\lvert \det g\rvert^s\int_{F^\times}\phi\left((0,t)g\right)(\mu_1\mu_2^{-1})(t)\lvert t\rvert^{2s}~d^\times t.
\end{equation}
Conversely, any $\phi\in\mathcal S(F^2)$ defines a form $f_\phi\in \Pi(\lvert\cdot\rvert^{s-\frac{1}{2}},\omega\lvert\cdot\rvert^{\frac{1}{2}-s})$ in that way (e.g.~\cite{bump} chap.~3.7). 
The Whittaker function belonging to $f$ (in a Whittaker model with unramified character $\psi$) is given by the first Fourier coefficient,
\begin{equation*}
 W_f(s,g,\psi)=\int_Ff(s,\begin{pmatrix}0&-1\\1&0\end{pmatrix}\begin{pmatrix}1&x\\0&1\end{pmatrix}g)\bar\psi(x)~dx.
\end{equation*}
Read in the Kirillov model , the form  for  $s=\frac{1}{2}$ is given by evaluation at $g=\begin{pmatrix}a&0\\0&1\end{pmatrix}$, thus
\begin{equation*}
 W_f(a):=W_f(\frac{1}{2},\begin{pmatrix}a&0\\0&1\end{pmatrix},\psi).
\end{equation*}
For $\mu_i$ unramified the newform is obtained by choosing in (\ref{funktion_principal_series}) concretely
\begin{equation*}
% \phi(x,y):=\left\{\begin{matrix}\mathbf 1_{\mathbf o_F}(x)\mathbf 1_{\mathbf o_F}(y), &\textrm{ if $K/F$ nonramified}\\
%                    \omega(y)\mathbf 1_{\mathbf o_F}(x)\mathbf 1_{\mathbf o_F^\times}(y), & \textrm{if $K/F$ ramified}
  %                 \end{matrix}\right..
\phi(x,y)=\mathbf 1_{\mathbf o_F}(x)\mathbf 1_{\mathbf o_F}(y).
\end{equation*}
Thus,
\begin{align}
 W_{\operatorname{new}}(a) &=
\mu_1(a)\lvert a\rvert^{\frac{1}{2}} \int_F\int_{F^\times}\mathbf 1_{\mathbf o_F}(at)\mathbf 1_{\mathbf o_F}(xt)\mu_1\mu_2^{-1}(t)\lvert t\rvert~d^\times t~\bar\psi(x)~dx\nonumber\\
&=
\mu_1(a)\lvert a\rvert^{\frac{1}{2}}\mathbf 1_{\mathbf o_F}(a)\vol(\mathbf o_F)\vol^\times(\mathbf o_F^\times)\sum_{j=-v(a)}^0\mu_1\mu_2^{-1}(\pi^j)\nonumber\\
&=
\lvert a\rvert^{\frac{1}{2}}\mathbf 1_{\mathbf o_F}(a)\vol(\mathbf o_F)\vol^\times(\mathbf o_F^\times)
\left\{\begin{matrix}\frac{\mu_1(a\pi)-\mu_2(a\pi)}{\mu_1(\pi)-\mu_2(\pi)},\textrm{ if }\mu_1\not=\mu_2\\\mu_1(a)(v(a)+1),\textrm{ if }\mu_1=\mu_2\end{matrix}\right..\label{gleichung_whittaker_neuform}
\end{align}

By  Proposition~\ref{prop_kirillov}, we have
\begin{prop}\label{prop_characterisierung_eisenstein_functionen}
 At the central place $s=\frac{1}{2}$ the Eisenstein series is the principle series representation $\Pi(1,\omega)$. Its Kirillov space as a function space in the variable $\xi$ is generated by $\mathcal S(F^\times)$ along with the functions around zero of the form
\begin{itemize}
 \item [$\bullet$] $\lvert\xi\rvert^{\frac{1}{2}}\left(a_1+a_2\omega(\xi)\right)$, if $K/F$ is a field extension,
\item[$\bullet$] $\lvert\xi\rvert^{\frac{1}{2}}\left(a_1+a_2v(\xi)\right)$, if $K/F$ is split.
\end{itemize}
\end{prop}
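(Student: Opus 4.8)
The plan is to read off the statement from Godement's classification of Kirillov models, recorded above as Proposition~\ref{prop_kirillov}; the only work is to identify the relevant representation and to check which branch of that proposition applies.

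First I would recall that the Eisenstein series $f_\Phi(s,\omega,\cdot)$ is an element of the principal series $\Pi(\lvert\cdot\rvert^{s-\frac{1}{2}},\omega\lvert\cdot\rvert^{\frac{1}{2}-s})$. Evaluating at the central point $s=\frac{1}{2}$ collapses the two inducing characters to the trivial character and to $\omega$, so the Eisenstein series at $s=\frac{1}{2}$ is the principal series $\Pi(1,\omega)$ with $\mu_1=1$ and $\mu_2=\omega$; this already gives the first assertion. The one genuine point to verify is that $\Pi(1,\omega)$ is an irreducible infinite dimensional representation, so that part~(b) of Proposition~\ref{prop_kirillov} is the branch that governs the stalk at zero. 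This is immediate: $\omega$ is a quadratic, hence unitary, character of $F^\times$, so $\omega\neq\lvert\cdot\rvert^{\pm1}$ (the latter being of infinite order); therefore $\mu_1\mu_2^{-1}=\omega$ is not $\lvert\cdot\rvert^{\pm1}$, which rules out $\Pi(1,\omega)$ being special or finite dimensional. Away from zero the Kirillov space is then all of $\mathcal S(F^\times)$.

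It remains to split into the two cases, which is the bookkeeping part. If $K/F$ is a field extension (ramified or unramified), then $\omega\neq1$, hence $\mu_1\neq\mu_2$, and the first bullet of Proposition~\ref{prop_kirillov}(b) produces representatives $\lvert\xi\rvert^{\frac{1}{2}}\bigl(c_1+c_2\omega(\xi)\bigr)\mathbf 1_{\wp^n}(\xi)$ around zero, which after renaming the constants is the asserted form $\lvert\xi\rvert^{\frac{1}{2}}(a_1+a_2\omega(\xi))$. If $K/F$ splits, then every element of $F^\times$ is a norm, so $\omega=1$ and $\mu_1=\mu_2=1$; now the second bullet of Proposition~\ref{prop_kirillov}(b) gives representatives $\lvert\xi\rvert^{\frac{1}{2}}\bigl(c_1+c_2v(\xi)\bigr)\mathbf 1_{\wp^n}(\xi)$, i.e.\ the form $\lvert\xi\rvert^{\frac{1}{2}}(a_1+a_2v(\xi))$. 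I do not anticipate any real obstacle here: apart from the (trivial) irreducibility observation, the argument is a direct transcription of Proposition~\ref{prop_kirillov} into the variable $\xi$, being careful only to distinguish the cases $\omega=1$ and $\omega\neq1$.
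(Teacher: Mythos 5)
Your proposal is correct and follows essentially the same route as the paper, which obtains the statement directly from Proposition~\ref{prop_kirillov} after identifying the Eisenstein series at $s=\frac{1}{2}$ with the principal series $\Pi(1,\omega)$ and distinguishing $\omega\neq1$ (field case) from $\omega=1$ (split case). The only addition is your explicit check that $\omega\neq\lvert\cdot\rvert^{\pm1}$, which the paper leaves implicit; this is a harmless and correct refinement.
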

Let us recall a property of the Hecke operator. For a finite set $S$ of places of $\mathbb F$, let $\mathbb A^S:=\prod_{v\notin S}\mathbf o_{\mathbb F_v}$ and $\mathbb A_S:=\prod_{v\in S}\mathbb F_v\cdot\mathbb A^S$.
\begin{prop}\label{prop_analytischer_Hecke_allgemein}
 (\cite{zhang} Chapter~2.4)
Let $\mu$ be a character of $\mathbb A^\times/\mathbb F^\times$. Let $\phi\in L^2(\operatorname{GL}_2(\mathbb F)\backslash \operatorname{GL}_2(\mathbb A),\mu)$, and let $W_\phi$ be the Whittakerfunction of $\phi$ in some Whittaker model. Let $S$ be the finite  set of infinite places and of those finite places $v$ for which $\phi_v$ is not invariant under the maximal compact subgroup $\operatorname{GL}_2(\mathbf o_{\mathbb F_v})$. For $b\in \mathbb A^s\cap\mathbb A^\times$ define
\begin{equation*}
 H(b):=\left\{g\in M_2(\mathbb A^S)\mid\det(g)\mathbb A^S=b\mathbb A^S\right\}. 
\end{equation*}
Then the Hecke operator $\mathbf T_b$ is well defined for $g\in\operatorname{GL}_2(\mathbb A_S)$:
\begin{equation*}
 \mathbf T_bW_\phi(g):=\int_{H(b)}W_\phi(gh)~dh.
\end{equation*}
If $y\in \mathbb A^S$ and $(b,y_f)=1$, then
\begin{equation*}
 \mathbf T_bW_\phi(g\begin{pmatrix} y&0\\0&1\end{pmatrix})=\lvert b\rvert^{-1}W_\phi(g\begin{pmatrix} yb&0\\0&1\end{pmatrix}).
\end{equation*}
\end{prop}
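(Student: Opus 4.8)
The plan is to verify the Hecke operator identity by reducing it to a computation about the structure of the coset space $H(b)/\mathrm{GL}_2(\mathbf o)$ at the relevant finite places. First I would note that since $\mathbf T_b$ only involves the places outside $S$, and $\phi$ is $\mathrm{GL}_2(\mathbf o_{\mathbb F_v})$-invariant there, the integral over $H(b)$ decomposes as a finite sum over coset representatives: choosing for each $v$ with $v(b)=n_v>0$ the standard representatives $\begin{pmatrix} \pi^j & u \\ 0 & \pi^{n_v-j}\end{pmatrix}$ (for $0\le j\le n_v$, $u$ running over $\mathbf o/\pi^{\min(j,n_v-j)}$), we get $\mathbf T_bW_\phi(g)=\sum W_\phi(g h_k)$ with $h_k$ the product of these local matrices. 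The point is that the number of such cosets times the appropriate measure normalization is exactly $\sigma_1(b)$ up to the $\lvert b\rvert^{-1}$ factor coming from the Haar measure of $H(b)$ relative to $\mathrm{GL}_2(\mathbf o)$.

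Next I would bring in the condition $(b,y_f)=1$, which forces $y$ to be a unit at every place dividing $b$. The key step is then the Whittaker function's quasi-periodicity: $W_\phi\left(g\begin{pmatrix} y&0\\0&1\end{pmatrix}\begin{pmatrix}\pi^j & u\\0&\pi^{n-j}\end{pmatrix}\right)$. I would conjugate the diagonal $\begin{pmatrix}y&0\\0&1\end{pmatrix}$ past the upper-triangular coset representative and use left $N(\mathbb F)$-invariance together with the transformation property of $W_\phi$ under the additive character $\psi$: because $y$ is a unit and $\psi$ is unramified, the character values $\psi(y u \pi^{\text{stuff}})$ are all trivial, so the sum over $u$ collapses to a single term (the one with $j=0$, i.e.\ $\begin{pmatrix}1&0\\0&\pi^n\end{pmatrix}$, equivalently $\begin{pmatrix}\pi^n&0\\0&1\end{pmatrix}$ after clearing the center), while all other $j$ contribute character sums $\sum_{u}\psi(\cdots u)=0$. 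This orthogonality is the heart of the matter and is exactly what distinguishes an unramified twist $y$ from a general one.

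The main obstacle I expect is the careful bookkeeping of the measure normalizations and the combinatorics of the coset decomposition across several places simultaneously — ensuring that the surviving term carries precisely the coefficient $\lvert b\rvert^{-1}$ and no stray factors of $q$ or volumes of $\mathbf o^\times$. One has to be attentive that $W_\phi$ here is the adelic Whittaker function, that $\lvert b\rvert$ is the adelic absolute value $\prod_{v\notin S}\lvert b\rvert_v$, and that the central character $\mu$ evaluated on the scalar matrices appearing in the coset representatives must be tracked (it is trivial on $\mathbf o^\times$ but not on uniformizers). I would organize the computation one place at a time, prove the local identity $\mathbf T_{b_v}W_\phi(g\,\mathrm{diag}(y_v,1)) = \lvert b_v\rvert^{-1} W_\phi(g\,\mathrm{diag}(y_vb_v,1))$ for $y_v$ a unit, and then take the product over $v\notin S$; well-definedness of $\mathbf T_b$ (independence of coset representatives) follows from the same $\mathrm{GL}_2(\mathbf o)$-invariance that makes the sum finite. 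The argument is essentially the classical computation of Hecke eigenvalues on Whittaker/Fourier expansions, transplanted to the adelic setting, so beyond the indexing there should be no serious difficulty.
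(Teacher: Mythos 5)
Since the paper only quotes this Proposition from Zhang (Chapter~2.4) and gives no proof, your proposal can only be measured against the standard argument -- and your overall strategy (reduce to the places outside $S$, decompose $H(b)$ into right cosets of $\operatorname{GL}_2(\mathbf o_v)$ with upper-triangular representatives, and kill terms by orthogonality of the additive character, using that $y$ is a unit at the places dividing $b$) is indeed that standard argument. One genuine point you gloss over: the $\psi$-transformation of $W_\phi$ is a \emph{left} transformation, while your unipotent matrices sit to the right of the arbitrary $g\in\operatorname{GL}_2(\mathbb A_S)$; you first need to reduce to $g$ trivial at the places outside $S$ (using right $\operatorname{GL}_2(\mathbf o_v)$-invariance of $W_\phi$ together with the invariance of $H(b)$ and $dh$ under conjugation by $\operatorname{GL}_2(\mathbf o_v)$), or equivalently pass honestly to the local spherical Whittaker functions, before any unipotent factor can be moved to the far left.

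The more serious problem is that your character-sum bookkeeping is backwards, and as written the computation would not produce the stated identity. First, the coset decomposition is wrong: for $v(b)=n$ the cosets with representative $\begin{pmatrix}\pi^j&u\\0&\pi^{n-j}\end{pmatrix}$ are parametrized by $u$ modulo $\pi^{j}$ (total $1+q+\dots+q^n$ cosets), not by $u$ modulo $\pi^{\min(j,n-j)}$. Second, writing $\operatorname{diag}(y,1)\begin{pmatrix}\pi^j&u\\0&\pi^{n-j}\end{pmatrix}=n\bigl(yu\pi^{-(n-j)}\bigr)\operatorname{diag}(y\pi^j,\pi^{n-j})$, the terms that survive are exactly those with $n-j=0$: for $j=n$ all $q^{n}$ values of $u$ give $\psi(yu)=1$ and each coset contributes $W_\phi\bigl(g\operatorname{diag}(y\pi^n,1)\bigr)$, so the factor $\lvert b\rvert^{-1}=q^{n}$ is precisely the \emph{number} of surviving cosets (with $\operatorname{vol}(\operatorname{GL}_2(\mathbf o_v))=1$), not a ``Haar measure of $H(b)$ relative to $\operatorname{GL}_2(\mathbf o)$''. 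The single coset you claim survives, $\operatorname{diag}(1,\pi^n)$, in fact contributes $\mu(\pi^n)W_\phi\bigl(g\operatorname{diag}(y\pi^{-n},1)\bigr)=0$, since a spherical Whittaker function vanishes at $\operatorname{diag}(a,1)$ with $v(a)<0$; and ``clearing the center'' to replace $\operatorname{diag}(1,\pi^n)$ by $\operatorname{diag}(\pi^n,1)$ is not permitted here, because the group is $\operatorname{GL}_2$ with an arbitrary central character $\mu$, which would introduce a factor $\mu(\pi^n)$ absent from the formula. With these corrections (correct coset count, surviving terms at $j=n$, multiplicity $q^{v(b)}$ giving $\lvert b\rvert^{-1}$, no appeal to the center) your outline becomes the standard proof of the Proposition.
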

That is, the operation of the (local) Hecke operator $\mathbf T_b$ on some Whittaker product is essentially translation by $b$:
\begin{equation}\label{hecke_auf_whittaker_prod}
 \mathbf T_bW(\xi,\eta)=\lvert b\rvert^{-2}W(b\xi,b\eta).
\end{equation}
%%%%%%%%%%%%%%%%%%%%%%%%%%%%%%%%%%%%%%%%%%%%%%%%%%%%%%%%%%%%%%%%%%%%%%%%%%%%%%%%%%%%%%%%%%%%%%%%%%%%%%%%%%%%%%%%%%%%%%%%%%%%%%%%%%%%%%%%%%%%%%%%%%%%%%%%%%%%%%%%%%%%%%%%%%%%%%%%%%%%%%%%%%%%%%%%%%%%%%%%%%%%%%%%%%%%%%%%%%%%%%%%%%%%%%%%%%%%%%%%%%
% Section: 
%%%%%%%%%%%%%%%%%%%%%%%%%%%%%%%%%%%%%%%%%%%%%%%%%%%%%%%%%%%%%%%%%%%%%%%%%%%%%%%%%%%%%%%%%%%%%%%%%%%%%%%%%%%%%%%%%%%%%%%%%%%%%%%%%%%%%%%%%%%%%%%%%%%%%%%%%%%%%%%%%%%%%%%%%%%%%%%%%%%%%%%%%%%%%%%%%%%%%%%%%%%%%%%%%%%%%%%%%%%%%%%%%%%%%%%%%%%%%%%%%%
\section{Characterisation of the local linking numbers}\label{section_charakterisierung}

Here, the local linking numbers are characterized as functions on $F^\times$. The characterizing properties are very near by those satisfied by the orbital integrals of \cite{jacquet}. Thus, establishing and proving proposition~\ref{eigenschaften_nonsplit} resp.~\ref{eigenschaften_split} is  influenced by the methods there.
Before stating these properties, two useful lemmas:

\begin{lem}\label{lemma1}
 Let $\phi\in\mathcal S(\chi,G)$.

(a) For each $y\in G$ there is an open set $V\ni y$ such that for all $g\in\operatorname{supp}(\phi)y^{-1}$ and all $\tilde y\in V$
\begin{equation*}
 \phi(g\tilde y)=\phi(gy).
\end{equation*}
(b) Let $C\subset G$ be compact. For each $g\in G$ there is an open set $U\ni g$ such that for all $\tilde g\in U$ and all $y\in TC$
\begin{equation}\label{lemma1_int_gleichung}
 \int_T\phi(t^{-1}\tilde gty)~dt=\int_T\phi(t^{-1}gty)~dt.
\end{equation}
\end{lem}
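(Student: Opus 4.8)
For part (a), the plan is to use that $\phi$ is locally constant and that multiplication in $G$ is continuous. Fix $y \in G$. For each $g \in \operatorname{supp}(\phi)y^{-1}$ the function $\tilde y \mapsto \phi(g\tilde y)$ is locally constant, so there is an open neighbourhood $W_g \ni y$ on which $\phi(g\cdot) = \phi(gy)$; by continuity of multiplication we may shrink $W_g$ to a neighbourhood that works uniformly for $\tilde g$ in a small neighbourhood of $g$. Since $\operatorname{supp}(\phi)$ is compact, $\operatorname{supp}(\phi)y^{-1}$ is compact, so we cover it by finitely many such neighbourhoods and take $V$ to be the finite intersection of the corresponding $W_g$'s. (More precisely: $\phi$ locally constant means there is a compact open subgroup $U_0 \le G$ with $\phi(gu) = \phi(g)$ for all $u \in U_0$; then any $V$ with $y^{-1}V \subseteq U_0 y^{-1} y = U_0$... one has to be careful since $y^{-1}V$ need not be a subgroup, so the finite-cover argument is the clean way.)

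For part (b), the idea is to reduce to (a). The key point is that the integral $\int_T \phi(t^{-1}\tilde g t y)\,dt$ has an integrand supported, as a function of $t$, in a set that is \emph{compact modulo the centre/stabilizer} — this is where the compactness of $C$ (and the structure of $T$ acting on $\operatorname{supp}\phi$) enters. Concretely: since $\phi$ transforms by $\chi$ under $T$, write $\phi(t^{-1}\tilde g t y) = \chi(t)^{-1}\phi(\tilde g t y)$ is not quite right because $t^{-1}$ sits on the left of $\tilde g$; instead one notes the integrand is nonzero only when $t^{-1}\tilde g t y \in \operatorname{supp}\phi$, i.e. $\tilde g \in t\,\operatorname{supp}(\phi)\,y^{-1}t^{-1}$. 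For $y$ ranging over the compact set $TC$ modulo $T$ and $t$ ranging over $T$, the relevant set of $t$'s (modulo the centre of $T$, over which the integrand is $\chi$-equivariant hence the integral is taken on a compact quotient) is compact; call a compact set of representatives $T_0$. Then $\{t^{-1}\tilde g t y : t \in T_0,\ y \in TC \text{ reps}\}$ together with $\tilde g$ near $g$ ranges over a compact subset of $G$, and applying part (a) — or rather the uniform local constancy that its proof yields — to this compact set produces an open $U \ni g$ on which $\phi(t^{-1}\tilde g t y) = \phi(t^{-1}g t y)$ for all relevant $t, y$ simultaneously. Integrating over $t \in T$ gives (\ref{lemma1_int_gleichung}).

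The main obstacle is making the compactness-modulo-$T$ claim in part (b) precise and uniform: one must check that as $y$ ranges over $TC$ and $t$ over $T$, the set of $t$ for which $t^{-1}g t y$ (or nearby points) can lie in $\operatorname{supp}\phi$ is, after quotienting by the stabilizer/centre on which everything is $\chi$-equivariant, contained in a fixed compact set — and that this compact set can be chosen independently of $\tilde g$ in a neighbourhood of $g$. When $T$ is compact this is immediate; when $T \cong F^\times\backslash K^\times$ is noncompact (the split case $K = F\oplus F$) one uses that $\phi$ has compact support and that the conjugation action $t \mapsto t^{-1}g t$ is proper on $T$ modulo the centralizer of $g$ (which for $g$ not normalizing $T$ is trivial in $T$), so the set of $t$ with $t^{-1}g t y \in \operatorname{supp}\phi$ is genuinely compact. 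Once that is in hand, part (b) is a routine application of part (a) to a single compact set, and the interchange with $\int_T$ is harmless because on that compact set the integrand is bounded and the measure finite after using $\chi$-equivariance. I would also remark that (b) is exactly the continuity statement needed to differentiate/translate the local linking numbers later, which motivates stating it here.
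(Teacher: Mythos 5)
Your plan for part (a) is in essence the same finite-cover argument the paper uses, but you introduce an error that the paper is careful to avoid: you write "since $\operatorname{supp}(\phi)$ is compact, $\operatorname{supp}(\phi)y^{-1}$ is compact," and later "one uses that $\phi$ has compact support." For $\phi\in\mathcal S(\chi,G)$ the support is of the form $TC$ with $C$ compact, and is therefore \emph{not} compact when $T=F^\times\backslash K^\times$ is noncompact (the split case $K=F\oplus F$) — which is a case the lemma must cover, since it is invoked in the proof of Proposition~\ref{eigenschaften_split}. The paper instead covers the compact piece $C$ by finitely many neighborhoods and then extends to all of $TC$ by the $\chi$-equivariance $\phi(tg)=\chi(t)\phi(g)$. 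Your parenthetical remark — that there is a single compact open subgroup $U_0$ under which $\phi$ is right-invariant — is the correct and cleaner way to repair this (it is itself proved by the same cover-$C$-then-use-equivariance argument), and you should promote it from an aside to the actual mechanism.

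For part (b) you propose a genuinely different route from the paper. The paper writes $g=g_1+\epsilon g_2$ in the $K+\epsilon K$ decomposition, observes $t^{-1}\tilde g t - t^{-1}gt = (\tilde g_1-g_1)+\epsilon(\tilde g_2-g_2)t\bar t^{-1}$, and uses $\lvert t\bar t^{-1}\rvert=1$ to conclude that the local-constancy constants can be chosen independently of $t$; this is a direct coordinate estimate that makes the uniformity in $t$ manifest. You instead aim to show that the integrand $t\mapsto\phi(t^{-1}gty)$ has compact support in $T$, uniformly for $y\in TC$ modulo $T$ and $\tilde g$ near $g$, and then apply (a) on a single compact set. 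That idea is sound — indeed it is what makes the inner integral converge at all — and is arguably more transparent in the split case. However, your formulation is muddled in two places. First, the parenthetical about the integrand being "$\chi$-equivariant modulo the centre of $T$" does not make sense: $T$ is abelian (it is its own centre), the integral is over all of $T$, and the integrand $\phi(t^{-1}gty)$ is not equivariant in $t$ because $t$ and $t^{-1}$ sit on both sides of $g$ — there is no quotient to pass to, and compactness of the support set has to be argued directly (e.g.\ in the $\operatorname{GL}_2$ picture, from boundedness constraints on the entries of $s\,t^{-1}gty$ for $s\in T$). Second, you explicitly defer the key point ("the main obstacle is making the compactness-modulo-$T$ claim precise and uniform" in $\tilde g$), but that \emph{is} the content of (b); a proof that names the obstacle and stops there has not proved the lemma. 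Either carry out the properness argument concretely and show the compact set can be taken locally constant in $g$ (which works because the relevant bounds depend only on the valuations of the off-diagonal entries and of $\det g$, all locally constant), or switch to the paper's coordinate estimate, which sidesteps the uniformity-in-$\tilde g$ question entirely.
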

\begin{proof}[Proof of Lemma \ref{lemma1}]
(a) It is enough to prove the statement for $y=\operatorname{id}$. As $\phi$ is locally constant, for every $g\in G$ there is an open set $U_g\ni\operatorname{id}$ with $\phi(gU_g)=\phi(g)$. Let $C\subset G$ be compact such that $\operatorname{supp}\phi=TC$. Then one can cover $C\subset \cup gU_g$ by finitely many $gU_g$. Define $U$ to be the intersection of those $U_g$ to get $\phi(gU)=\phi(g)$ for all $g\in TC$.\\
(b) It is enough to prove the statement for $y\in C$ rather than $y\in TC$, as a factor $s\in T$ just changes the integral by a factor $\chi(s)$. By (a) there is an open set $V_y\ni y$ such that $\phi(t^{-1}gt\tilde y)=\phi(t^{-1}gty)$ for $\tilde y\in V_y$ and $t^{-1}gt\in\operatorname{supp}(\phi)y^{-1}$. Take finitely many $y\in C$ such that the $V_y$ cover $C$. It is enough to find open sets $U_y\ni g$ for these $y$ so that  eqn.~(\ref{lemma1_int_gleichung}) is fulfilled. Then $\cap U_y$ is an open set such that eqn.~(\ref{lemma1_int_gleichung}) is satisfied for all $y\in TC$. 
Write $g=g_1+\epsilon g_2$ and describe a neighborhood $U_y$ of $g$ by $k_1,k_2>0$ depending on $y$ and the obstructions  $\lvert \tilde g_i-g_i\rvert<k_i$, $i=1,2$, for $\tilde g$ lying in $U_y$. Write $t^{-1}\tilde gt=g_1+\epsilon g_2t\bar t^{-1}+(\tilde g_1-g_1)+\epsilon(\tilde g_2-g_2)t\bar t^{-1}$. As $\phi$ is locally constant, one can choose $k_1,k_2$ depending on $y$ such that
\begin{equation*}
 \phi(t^{-1}\tilde gt)=\phi((g_1+\epsilon g_2t\bar t^{-1})y)=\phi(t^{-1}gty).
\end{equation*}
These constants are independent from $t$ as $\lvert (\tilde g_2-g_2)t\bar t^{-1}\rvert=\lvert\tilde g_2-g_2\rvert$.
\end{proof}
\begin{lem}\label{lemma2}
 Let $\phi\in\mathcal S(F\oplus F)$.

(a) There are $A_1,A_2\in\mathcal S(F)$ such that
\begin{equation*}
 \int_{F^\times}\phi(a^{-1}y,a)~d^\times a =A_1(y) + A_2(y)v(y).
\end{equation*}
(b) Let $\eta$ be a nontrivial (finite) character of $F^\times$. Then there are $B_1,B_2\in\mathcal S(F)$ and $m\in \mathbb Z$ such that for $0\not= y\in\wp^m$
\begin{equation*}
 \int_{F^\times}\phi(a^{-1}y,a)\eta(a)~d^\times a = B_1(y)+B_2(y)\eta(y).
\end{equation*}
\end{lem}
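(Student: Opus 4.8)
The plan is to reduce, by bilinearity, to the case where $\phi$ is a pure tensor, then to a short list of elementary building blocks, after which both identities become immediate one-line computations of $d^\times a$-integrals.

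\emph{Reduction.} $\mathcal S(F\oplus F)$ is spanned by pure tensors $f\otimes g$ with $f,g\in\mathcal S(F)$, and $\mathcal S(F)=\mathcal S(F^\times)\oplus\mathbb C\cdot\mathbf 1_{\mathbf o_F}$, since any $f\in\mathcal S(F)$ is constant $=f(0)$ near $0$, so $f-f(0)\mathbf 1_{\mathbf o_F}\in\mathcal S(F^\times)$. Hence it suffices to prove (a) and (b) when $\phi=f\otimes g$ with each of $f,g$ either in $\mathcal S(F^\times)$ or equal to $\mathbf 1_{\mathbf o_F}$, and to add the finitely many contributions. For such $\phi$ put $\Phi(y):=\int_{F^\times}f(a^{-1}y)g(a)\,d^\times a$ (with the extra factor $\eta(a)$ in (b)). For fixed $y\in F^\times$ the integrand is supported on a compact subset of $F^\times$ — the condition $a\in\operatorname{supp}(g)$ bounds $|a|$ above, while $a^{-1}y\in\operatorname{supp}(f)$ bounds $|a|$ away from $0$ — so $\Phi(y)$ converges, and by uniform local constancy of $f,g$ it is locally constant in $y\in F^\times$.

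\emph{The four cases.} If $f,g\in\mathcal S(F^\times)$, the same bound confines $v(y)$ to a finite range, so $\Phi\in\mathcal S(F^\times)\subseteq\mathcal S(F)$ and $A_1:=\Phi,\ A_2:=0$ (resp. $B_1:=\Phi,\ B_2:=0$) works. If exactly one of $f,g$ equals $\mathbf 1_{\mathbf o_F}$, the substitution $u=a^{-1}y$ puts $\Phi(y)$ into the form $\int_{v(u)\le v(y)}h(u)\,d^\times u$ in (a), and $\eta(y)^{\varepsilon}\int_{v(u)\le v(y)}h(u)\,d^\times u$ with $\varepsilon\in\{0,1\}$ in (b), for a suitable $h\in\mathcal S(F^\times)$; such a cut-off integral equals $\int_{F^\times}h\,d^\times u$ once $v(y)$ is large and $0$ once $v(y)$ is small, hence extends to an element of $\mathcal S(F)$ — which gives $A_2=0$ in (a), and in (b) a representation $B_1(y)+B_2(y)\eta(y)$ with $B_i\in\mathcal S(F)$ valid on a deep enough $\wp^m$. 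Finally, for $f=g=\mathbf 1_{\mathbf o_F}$ one has in (a)
\[
\Phi(y)=\int_{0\le v(a)\le v(y)}d^\times a=(v(y)+1)\,\vol^\times(\mathbf o_F^\times)\,\mathbf 1_{\mathbf o_F}(y),
\]
i.e. $A_1(y)+A_2(y)v(y)$ with $A_1=A_2=\vol^\times(\mathbf o_F^\times)\mathbf 1_{\mathbf o_F}$; and in (b) one meets $\sum_{n=0}^{v(y)}\int_{\pi^n\mathbf o_F^\times}\eta(a)\,d^\times a=\big(\int_{\mathbf o_F^\times}\eta\,d^\times a\big)\sum_{n=0}^{v(y)}\eta(\pi)^n$, which vanishes identically when $\eta$ is ramified, while if $\eta$ is unramified (so $\eta(\pi)\ne 1$ since $\eta\ne 1$) it equals $\vol^\times(\mathbf o_F^\times)\,\frac{\eta(\pi)\eta(y)-1}{\eta(\pi)-1}$ for $y\in\mathbf o_F\setminus\{0\}$, using $\eta(y)=\eta(\pi)^{v(y)}$ — again of the shape $B_1+B_2\eta$. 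Summing the finitely many pieces and taking $m$ to be the largest of the occurring thresholds proves both assertions.

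\emph{Main point to watch.} Everything above is routine integration; the one step I would carry out carefully is the claim, in the mixed case, that $y\mapsto\int_{v(u)\le v(y)}h(u)\,d^\times u$ really does extend to an element of $\mathcal S(F)$, i.e. is locally constant across $y=0$ (not merely on $F^\times$) and of compact support. This is exactly the statement that the partial sums over $v(u)$ stabilise in \emph{both} directions, which uses that $\operatorname{supp}(h)$ bounds $v(u)$ from both sides (and, in (b) with $\eta$ ramified, the orthogonality $\int_{\mathbf o_F^\times}\eta\,d^\times a=0$); restricting $y$ to a deep $\wp^m$ in (b) is simply the device that makes the transitional terms disappear outright, which is why a threshold $m$ is recorded there and not in (a).
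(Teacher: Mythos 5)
Your proof is correct and follows essentially the same route as the paper: reduce by linearity to elementary tensor building blocks and compute the $d^\times a$-integrals directly, the only (cosmetic) difference being that you organise the blocks via the splitting $\mathcal S(F)=\mathcal S(F^\times)\oplus\mathbb C\,\mathbf 1_{\mathbf o_F}$ where the paper uses characteristic functions of balls $\mathbf 1_{\wp^n}$, $\mathbf 1_{x+\wp^n}$. Your explicit treatment of (b) (orthogonality of ramified $\eta$ on $\mathbf o_F^\times$, geometric series with $\eta(y)=\eta(\pi)^{v(y)}$ in the unramified case) is exactly the computation the paper leaves as ``similar to (a)''.
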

\begin{proof}[Proof of Lemma \ref{lemma2}]
(a) Any $\phi\in\mathcal S(F\oplus F)$ is a finite linear combination of the following elementary functions: $\mathbf 1_{\wp^n}(a)\mathbf 1_{\wp^n}(b)$, $\mathbf 1_{x+\wp^n}(a)\mathbf 1_{\wp^n}(b)$, $\mathbf 1_{\wp^n}(a)\mathbf 1_{z+\wp^n}(b)$, $\mathbf 1_{x+\wp^n}(a)\mathbf 1_{z+\wp^n}(b)$ for suitable $n\in \mathbb Z$ and $v(x),v(z)>n$. It is enough to prove the statement for these elementary functions. One gets
\begin{equation*}
 \int_{F^\times}\mathbf 1_{\wp^n}(a^{-1}y)\mathbf 1_{\wp^n}(a)~d^\times a =\mathbf 1_{\wp^{2n}}(y)v(y\pi^{-2n+1})\operatorname{vol}^\times(\mathbf o_F^\times).
\end{equation*}
Thus, if $0\in\operatorname{supp}\phi$, then the integral has a pole in $y=0$, otherwise it hasn't:
\begin{equation*}
 \int_{F^\times}\mathbf 1_{x+\wp^n}(a^{-1}y)\mathbf 1_{\wp^n}(a)~d^\times a =\mathbf 1_{\wp^{v(x)+n}}(y)\operatorname{vol}^\times(1+\wp^{n-v(x)}),
\end{equation*}
\begin{equation*}
 \int_{F^\times}\mathbf 1_{\wp^n}(a^{-1}y)\mathbf 1_{z+\wp^n}(a)~d^\times a =\mathbf 1_{\wp^{v(z)+n}}(y)\operatorname{vol}^\times(1+\wp^{n-v(z)})
\end{equation*}
and
\begin{equation*}
 \int_{F^\times}\mathbf 1_{x+\wp^n}(a^{-1}y)\mathbf 1_{z+\wp^n}(a)~d^\times a =\mathbf 1_{xz(1+\wp^{m})}(y)\operatorname{vol}^\times(1+\wp^{m}),
\end{equation*}
where $m:=n-\operatorname{min}\{v(x),v(z)\}$.\\
(b) Similar computations to those of (a).
\end{proof}

In describing the properties of the local linking numbers, one has to distinguish between the case of a compact torus $T=F^\times \backslash K^\times$, i.e. $K/F$ is a field extension, and the case of a noncompact torus $T$, i.e. $K/F$ is split.

\begin{prop}\label{eigenschaften_nonsplit}
Let $K=F(\sqrt A)$ be a field extension and let $\omega$ be the associated quadratic character. Let $\phi,\psi\in \mathcal S(\chi,G)$. The local linking number $<\phi,\psi>_x$ is a function of $x\in F^\times$ having the following properties:

(a) $<\phi,\psi>_x$ is zero on the complement of $c\operatorname N$.

(b) $<\phi,\psi>_x$ is zero on a neighborhood of $1\in F^\times$.

(c) There is a locally constant function $A$ on a neighborhood $U$ of $0$ such that for all $0\not= x\in U$: $<\phi,\psi>_x=A(x)(1+\omega(cx))$.

(d) The behavior around infinity is described as follows: There is an open set $V\ni 0$ such that for all $x^{-1}\in V\cap c\operatorname N$
\begin{equation*}
 <\phi,\psi>_x=\delta(\chi^2=1)\chi_1(\frac{A}{c})\chi_1(x)\int_{T\backslash G}\phi(\epsilon y)\bar\psi(y)~dy.
\end{equation*}
Here the character $\chi_1$ of $F^\times$ is given by $\chi=\chi_1\circ\operatorname N$ if $\chi^2=1$. Especially, the local linking number vanishes in a neighborhood of infinity if $\chi^2\not= 1$.
\end{prop}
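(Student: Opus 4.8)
The plan is to work directly from the integral definition
\[
<\phi,\psi>_x=\int_{T\backslash G}\int_T\phi(t^{-1}\gamma(x)ty)\,dt\,\bar\psi(y)\,dy,
\]
choosing for each $x\in c\operatorname N$ an explicit trace-zero representative $\gamma(x)=\gamma_1+\epsilon\gamma_2\in D^\times$ with $P(\gamma(x))=c\operatorname N(\gamma_2)/\operatorname N(\gamma_1)=x$; since the trace is zero we may take $\gamma_1\in K$ purely ``imaginary'' (i.e.\ $\bar\gamma_1=-\gamma_1$), so $\gamma(x)=\sqrt A\,a+\epsilon\,b$ with $c b\bar b/(-A a\bar a)=x$, and scaling by $K^\times$ on both sides (which does not change the double coset nor, by the $\chi$-equivariance, the value up to a harmless unit) we can normalize $\gamma(x)$ so that its dependence on $x$ is through a single parameter of absolute value $|x|$. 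Part (a) is then immediate from Definition~\ref{def_lln}. For part (b): as $x\to 1$ the representative $\gamma(x)$ approaches the non-$D^\times$ locus $\{\operatorname N(\gamma_1)=c\operatorname N(\gamma_2)\}$ only in the split case, but here $K/F$ is a field, so $\gamma(1)$ is perfectly fine; the point is rather that $\phi$ is compactly supported modulo $T$ and $T$ is compact, so the inner integral $\int_T\phi(t^{-1}\gamma(x)ty)\,dt$ is, for fixed $y$, a locally constant function of $\gamma(x)$ by Lemma~\ref{lemma1}(b) applied with a suitable compact $C$ containing a fundamental domain for $\operatorname{supp}\psi$; combined with local constancy in $x$ this forces $<\phi,\psi>_x$ to be locally constant in $x\in F^\times$. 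To see it actually vanishes near $1$ one observes that $\gamma(1)$ normalizes $T$ (it has $\gamma_2$-part zero, i.e.\ lies in $K^\times$), so in a punctured neighborhood of $1$ one is in a double coset whose representatives drift away from $K^\times\epsilon^0$; a direct estimate using that $\phi$ and $\psi$ have compact support modulo $T$ shows the overlap of $\operatorname{supp}(\phi)$ with $T\gamma(x)TC$ is empty for $|x-1|$ small. (This is the ``vanishing around $1$'' phenomenon of the orbital integrals in \cite{jacquet}.)

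The substance of the proposition is in (c) and (d), describing the behavior as $|x|\to 0$ and $|x|\to\infty$. The idea is to use the Iwasawa-type decomposition $G=TN_\epsilon \cdot(\text{max.\ compact})$ appropriate to the nonsplit form, writing a general $y\in G$ as $y=t\cdot(1+\epsilon u)\cdot k$ or similar, so that after using $\chi$-equivariance in $t$ and compactness of $k$ one reduces the double integral to an integral over a single variable $u\in F$ (or $K$) against a Schwartz function coming from $\phi$ and $\psi$. In this coordinate the parameter $x$ enters $\gamma(x)$ multiplicatively, and the inner integral becomes (after the change of variables $u\mapsto$ something scaled by $x$) essentially $\int \Phi(\ldots, x^{\pm 1}\cdot(\ldots))\,d^\times(\cdot)$ for $\Phi\in\mathcal S$; the asymptotics of such integrals as the parameter degenerates produce exactly a ``stalk around zero'' of the shape $A(x)(1+\omega(cx))$ — the two terms corresponding to the two cosets of $\operatorname N(K^\times)$ inside $F^\times$, equivalently to whether a certain norm equation is solvable, which is precisely what $\omega(cx)$ detects. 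For (d) one inverts $x$: now the representative $\gamma(x)$ is dominated by its $\epsilon$-part, so $\gamma(x)y\approx \epsilon\cdot(\text{something})\cdot y$, and the integral limits to $\int_{T\backslash G}\phi(\epsilon y)\bar\psi(y)\,dy$ weighted by the character by which $T$ acts on the ``$\epsilon$-translated'' function — and $T$ acts on $\phi(\epsilon\cdot)$ by $t\mapsto\phi(\epsilon t \cdot)=\phi(\bar t\epsilon\cdot)=\chi(\bar t)\phi(\epsilon\cdot)$, i.e.\ by $\bar\chi$, which is the same as $\chi$ only when $\chi^2=1$; hence the factor $\delta(\chi^2=1)$ and, after matching the normalization of $\gamma(x)$, the explicit constant $\chi_1(A/c)\chi_1(x)$.

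The main obstacle I expect is bookkeeping the normalization of $\gamma(x)$ and the associated units: the double coset is insensitive to scaling $\gamma(x)$ by $K^\times$, but the individual integral value is only insensitive up to $\chi$ of that scalar, so to pin down the exact constants in (c) and (d) (the argument $cx$ of $\omega$, the factor $\chi_1(A/c)$) one has to track which norm class the $\gamma_1$- and $\gamma_2$-parts land in as $x$ varies, and feed this carefully through Lemma~\ref{lemma2} (in the field case, the analogue: an integral $\int_{F^\times}\phi(a^{-1}y,\ldots)\omega(a)\,d^\times a$, whose asymptotics split off a $\omega(y)$-term exactly as in Lemma~\ref{lemma2}(b) with $\eta=\omega$). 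Establishing that the leading term is governed by $\omega$ rather than some other ramified character, and that it genuinely appears (i.e.\ $A\not\equiv 0$ for suitable $\phi,\psi$), is the delicate point; everything else is the kind of $\wp$-adic integration the paper advertises as ``vast but underlyingly elementary''.
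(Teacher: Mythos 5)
Your parts (a) and (d) are essentially the paper's own argument (for (d): factor out $\chi(\gamma_2)$, use local constancy to replace the integrand by $\phi(t^{-1}\bar t\epsilon y)$, and observe that $\int_T\chi(t^{-1}\bar t)\,dt$ vanishes unless $\chi^2=1$). The gap is in (b). The fibre $P^{-1}(1)$ has nothing to do with the normalizer of $T$: an element $\gamma=\gamma_1+\epsilon\gamma_2$ with $P(\gamma)=1$ satisfies $\operatorname N(\gamma_1)=c\operatorname N(\gamma_2)$, i.e.\ its reduced norm vanishes, so it is a non-unit of $D$; elements of $K^\times$ (those with $\gamma_2=0$) are sent by $P$ to $0$, and $\epsilon K^\times$ to $\infty$, never to $1$. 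Moreover the relevant dichotomy is whether $D$ splits, not whether $K$ does: if $D$ is division then $1\notin c\operatorname N$ and (b) is vacuous, while if $D\cong M_2(F)$ (still possible with $K$ a field) there is no ``perfectly fine $\gamma(1)$'' at all, and $P(\gamma)\to1$ means $\gamma$ degenerates towards the determinant-zero locus. So your justification (``$\gamma(1)$ normalizes $T$\dots representatives drift away'') rests on a wrong picture, and the remaining sentence ``a direct estimate shows the overlap of $\operatorname{supp}\phi$ with $T\gamma(x)TC$ is empty for $\lvert x-1\rvert$ small'' is precisely the statement to be proved, not a proof. The paper supplies the missing argument by compactness: writing $\operatorname{supp}\phi\subset TC$ with $C$ compact, if $\gamma_i\in TCT$ had $P(\gamma_i)\to1$, one may (using compactness of $T$) normalize $\gamma_i=1+\epsilon t_i=z_ic_i$ with $c_i\in C$, $z_i\in Z$; then $\det\gamma_i=1-ct_i\bar t_i\to0$ forces $z_i\to0$, hence $\gamma_i\to0$, contradicting $\gamma_i=1+\epsilon t_i$. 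Some argument of this kind must be added.

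Your route for (c) is also misdirected, though the statement is easier than you expect. Since $T$ is compact, no analogue of Lemma~\ref{lemma2} and no $\omega$-twisted asymptotics enter: choosing $\gamma(x)=\sqrt A+\epsilon\gamma_2$ with $\gamma_2\to0$ as $x\to0$, local constancy of $\phi$ makes the inner integral literally equal to $\vol_T(T)\chi(\sqrt A)\phi(y)$ once $\gamma_2\in\wp_K^m$, so $<\phi,\psi>_x$ is \emph{constant} on $U\cap c\operatorname N$ for a small neighbourhood $U$ of $0$; the factor $(1+\omega(cx))$ is nothing but $2\cdot\mathbf 1_{c\operatorname N}(x)$, i.e.\ it merely records property (a), not the leading term of an expansion over a noncompact torus (that phenomenon belongs to the split case, Proposition~\ref{eigenschaften_split}). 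Consequently your announced ``delicate point'' --- ruling out other ramified characters and showing $A\not\equiv0$ --- is empty: no such competition arises, and the proposition does not claim $A\neq0$. As written, (b) and the mechanism proposed for (c) leave genuine gaps, even though (a), (d) and the norm-equation interpretation you mention in passing for (c) are sound.
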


\begin{prop}\label{eigenschaften_split}
Let $K=F\oplus F$ be a split algebra. Let $\chi=(\chi_1,\chi_1^{-1})$ and let $\phi,\psi\in\mathcal S(\chi,G)$. The local linking number $<\phi,\psi>_x$ is a function of $x\in F^\times$ carrying the following properties:

(a) $<\phi,\psi>_x$ is zero on a neighborhood of $1\in F^\times$.

(b) $<\phi,\psi>_x$ is locally constant on $F^\times$.

(c) There is an open set $U\ni 0$ and locally constant functions $A_1,A_2$ on $U$ such that for $0\not= x\in U$: $<\phi,\psi>_x=A_1(x)+A_2(x)v(x)$.

(d) There is an open set $V\ni 0$ and locally constant functions $B_1,B_2$ on $V$ such that for $x^{-1}\in V$:
\begin{equation*}
 <\phi,\psi>_x=\left\{\begin{matrix}
                       \chi_1(x)(B_1(x^{-1})+B_2(x^{-1})v(x)), &\textrm{if } \chi_1^2=1\\
			\chi_1(x)B_1(x^{-1})+\chi_1^{-1}(x)B_2(x^{-1}), & \textrm{if } \chi^2\not=1
                      \end{matrix}.\right.
\end{equation*}
For $\chi_1^2=1$ the term $B_2(x^{-1})v(x)$ only occures   if $\operatorname{id}\in \operatorname{supp}\phi(\operatorname{supp}\psi)^{-1}$.
\end{prop}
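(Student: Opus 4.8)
The plan is to carry out the whole analysis inside $G=\operatorname{PGL}_2(F)$, with $T$ the diagonal torus and $\chi$ corresponding to the character $\chi_1$ of $T\cong F^\times$. First I would fix the trace-zero representative $\gamma(x)=\begin{pmatrix}1&-x\\1&-1\end{pmatrix}$ of $P^{-1}(x)$ (so $\det\gamma(x)=x-1$), compute $t^{-1}\gamma(x)t=\begin{pmatrix}1&-s^{-1}x\\s&-1\end{pmatrix}$ for $t=\operatorname{diag}(s,1)$, factor out a left diagonal element, and use the left $(T,\chi)$-equivariance of $\phi$ to rewrite
\begin{equation*}
 <\phi,\psi>_x=\chi_1(x-1)^{-1}\int_{T\backslash G}\int_{F^\times}\phi\big(\bar n(a)\,n(-a^{-1}\xi)\,y\big)\,d^\times a\;\bar\psi(y)\,dy,\qquad \xi=\tfrac{x}{x-1},
\end{equation*}
where $n$ and $\bar n$ are the upper and lower unipotents. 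This is Zhang's parameter, and it turns the four assertions into statements about the inner orbital-type integral as a function of $\xi$, with $x\to0,1,\infty$ matching $\xi\to0,\infty,1$.

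Property (b) is then essentially immediate from Lemma~\ref{lemma1}(b): with a compact $C$ such that $\operatorname{supp}\psi=TC$, that lemma applied to $g=\gamma(x)$ yields an open $U\ni\gamma(x)$ on which the inner $T$-integral is constant in the group variable for all $y\in TC$; since $x'\mapsto\gamma(x')$ is continuous, $\gamma(x')\in U$ for $x'$ near $x$, hence $<\phi,\psi>_{x'}=<\phi,\psi>_x$. For property (a) I would argue from the displayed formula: identifying $T\backslash G$ with the ordered pairs of distinct points of $\mathbb P^1(F)$, a compact subset stays a fixed chordal distance $\delta>0$ from the diagonal, while the two points of the pair attached to $\bar n(a)n(-a^{-1}\xi)y$ differ, in an affine chart, by $a^{-1}$; a one-line estimate then bounds their distance by a fixed multiple of $|\xi|^{-1}$ for every $a\in F^\times$ and $y$ in a fixed compact set. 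So once $|\xi|=|x|/|x-1|$ is large enough --- that is, once $x$ is close enough to $1$ --- the integrand vanishes identically and $<\phi,\psi>_x=0$. (Heuristically $\det\gamma(x)=x-1\to0$, so the double coset escapes to the boundary of $T\backslash G/T$.)

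The core is (c) and (d). I would split $T\backslash G$ along its two Bruhat cells (the small cell $T\backslash B\cong N$ and the big cell $T\backslash BwN$), which partitions $\operatorname{supp}\psi$ into finitely many compact pieces; shrinking these further by Lemma~\ref{lemma1}(a) one may assume $\phi$ and $\psi$ are constant on the pieces and on the relevant unipotent translates of them. Over each piece the inner $\int_{F^\times}d^\times a$ becomes, after writing $\bar n(a)n(-a^{-1}\xi)$ in Iwasawa form and pulling its diagonal part $t(a,\xi)$ to the left, an integral of one of the two shapes handled by Lemma~\ref{lemma2},
\begin{equation*}
 \int_{F^\times}\Phi(a^{-1}z,a)\,d^\times a\qquad\text{or}\qquad\int_{F^\times}\Phi(a^{-1}z,a)\,\chi_1^{\pm2}(a)\,d^\times a,
\end{equation*}
with $\Phi\in\mathcal S(F\oplus F)$ and $z$ a monomial in $\xi$ near $0$ (for (c)) resp.\ in $\eta=1-\xi$ near $1$ (for (d)); the extra character in the second shape is $\chi_1$ of $t(a,\xi)$, which depends on the integration variable only at the cusp $\xi=1$. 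Lemma~\ref{lemma2}(a) then produces the shape $A_1(\xi)+A_2(\xi)v(\xi)$, giving (c); for (d), Lemma~\ref{lemma2}(b) (when $\chi_1^{\pm2}$ is nontrivial, i.e.\ $\chi_1^2\ne1$) produces $B_1+B_2\,\chi_1^{\pm2}(\eta)$, while Lemma~\ref{lemma2}(a) (when $\chi_1^2=1$, so the twisting character is trivial) produces $B_1+B_2\,v(\eta)$. Substituting $v(\xi)=v(x)$ near $0$, $v(\eta)=-v(x)$ and $\chi_1(\eta)=\chi_1(-1)\chi_1(x-1)^{-1}$ near $\infty$, and folding in the prefactor $\chi_1(x-1)^{-1}$ (which is constant near $0$ and equals $\chi_1(x)^{-1}$ near $\infty$), reproduces the stated formulas. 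The clause in (d) that the $v$-term occurs only if $\operatorname{id}\in\operatorname{supp}\phi(\operatorname{supp}\psi)^{-1}$ (and likewise for the $v$-term in (c)) is read off from the proof of Lemma~\ref{lemma2}(a): its $v(z)$-coefficient is nonzero only if the relevant $\Phi$ has $(0,0)$ in its support, which unwinds to $\phi$ and $\psi$ sharing a point of their supports.

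The main obstacle is exactly this reduction to Lemma~\ref{lemma2}, for two reasons. First, $T$ is noncompact, so the inner $d^\times a$-integral is genuinely improper; one must establish convergence (which uses $\xi\notin\{0,1\}$ for $x\in F^\times\setminus\{1\}$) and, more importantly, isolate its tails near the relevant cusp --- this is where the asymptotic $\chi_1^{\pm1}(x)$ and $v(x)$ terms are created --- with cut-offs scaling like $|\xi|$ resp.\ $|1-\xi|$, so that the correct arguments are fed into Lemma~\ref{lemma2}. Second, everything must be uniform: the Bruhat partition of $\operatorname{supp}\psi$, the local-constancy radii of $\phi$, and the Iwasawa factorisation all have to hold with constants independent of $\xi$ as $\xi\to0$ or $\xi\to1$. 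This bookkeeping, rather than any single sharp estimate, is the real work; in the compact-torus case (Proposition~\ref{eigenschaften_nonsplit}) it collapses because the inner integral already has compact support.
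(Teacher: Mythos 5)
Your proposal is correct in outline and, for parts (b), (c), (d), follows essentially the paper's own route: (b) is Lemma~\ref{lemma1}(b) plus continuity of $x\mapsto\gamma(x)$, and (c), (d) are obtained from exactly the two hyperbola-type integrals of Lemma~\ref{lemma2} --- the untwisted one attached to the small Bruhat cell, with parameter $\xi$, which produces the $v(x)$-term at $x\to0$, and the $\chi_1^{\pm2}$-twisted one attached to the big cell, with parameter $\eta$, which produces the $\chi_1^{\pm1}(x)$- resp.\ $v(x)$-terms at $x\to\infty$ --- followed by the same parameter translation and the finite covering of $\operatorname{supp}\psi$ via Lemma~\ref{lemma1}(a). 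The genuinely different piece is (a): the paper bounds $\lvert P(\gamma)-1\rvert$ from below on $\operatorname{supp}\phi$ by writing $P$ in Bruhat coordinates and then spreads the bound over $\operatorname{supp}\psi$; your argument via pairs of distinct points of $\mathbb P^1$ is a valid and more geometric alternative (the pair attached to $\bar n(a)n(-a^{-1}\xi)y$ is the image under $y$ of $(a^{-1}\xi,\,-a^{-1}(1-\xi))$, so the difference is indeed $a^{-1}$ and the chordal estimate $O(\lvert\xi\rvert^{-1})$ holds uniformly in $a$ and in $y$ from a compact set). Your prefactor $\chi_1(x-1)^{-1}$ versus the paper's $\chi_1(x-1)$ is only the convention of which diagonal entry carries $\chi_1$ and is harmless.

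Two points to repair in the write-up. First, the cell splitting must be applied to $\phi$, not to $\operatorname{supp}\psi$: for fixed $y$ one writes $\rho(y)\phi=\phi_1^y+\phi_2^y$ with supports in $TNN'$ resp.\ $TNwN$ (the two cells are open and overlap, so this is a splitting of the function), and then each summand of the inner integral is literally one of the two Lemma~\ref{lemma2} shapes; partitioning $\operatorname{supp}\psi$ only serves the finiteness step via Lemma~\ref{lemma1}(a), and for a fixed $y$ both shapes occur simultaneously. Relatedly, the twist $\chi_1^{\mp2}(a)$ is present on the big-cell piece at both cusps; what is true (and what you presumably mean) is that near $\xi=0$ that piece has its argument confined to a compact subset of $F^\times$, hence is merely locally constant, so the twist never manifests there. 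Second, be careful when unwinding the $(0,0)\in\operatorname{supp}$-criterion of Lemma~\ref{lemma2}(a): the origin of the small-cell chart is the identity, so at $x\to0$ one does get the condition $\operatorname{id}\in\operatorname{supp}\phi\,(\operatorname{supp}\psi)^{-1}$, but the origin of the big-cell chart $(u,v)\mapsto n(u)wn(v)$ is $w$, so at $x\to\infty$ the literal necessary condition the argument yields is $wy\in\operatorname{supp}\phi$ for some $y\in\operatorname{supp}\psi$, i.e.\ $w\in\operatorname{supp}\phi\,(\operatorname{supp}\psi)^{-1}$, rather than the supports sharing a point; the paper's own justification of the final clause of (d) is equally terse at exactly this spot, so if you want the clause verbatim you need an additional remark (or should state it with $w$). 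Neither point changes the overall structure or validity of your approach to (a)--(d).
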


\begin{proof}[Proof of Proposition~\ref{eigenschaften_nonsplit}]
(b) Assume $1\in c\operatorname N$, otherwise this property is trivial. One has to show that for all  $\gamma$ with $P(\gamma)\in U$, where $U$ is a sufficiently small neighborhood of $1$,
\begin{equation*}
 \int_{T\backslash G}\int_t\phi(t^{-1}\gamma ty)~dt\bar\psi(y)~dy = 0.
\end{equation*}
This is  done by showing that the even inner integral is zero. Let $C\subset G$ be compact such that $\operatorname{supp}\phi\subset TC$. Then $\phi$ obviously vanishes outside of $TCT$. It is enough to show that there is $k>0$ such that $\lvert P(\gamma)-1\rvert>k$ holds for all $\gamma\in TCT$. Assume there isn't such $k$. Let $(\gamma_i)_i$ be a sequence such that $P(\gamma_i)$ tends to $1$. Multiplying by elements of $T$ and enlarging $C$ occasionally (this is possible as $T$ is compact!), one can assume $\gamma_i=1+\epsilon t_i=z_ic_i$,
where $t_i\in T$, $c_i\in C$, $z_i\in Z$. Then $P(\gamma_i)=ct_i\bar t_i =1+a_i$, where $a_i\rightarrow 0$. We have $\det \gamma_i=1-ct_i\bar t_i=-a_i$ as well as $\det \gamma_i =z_i^2\det c_i$. As $C$ is compact, $(z_i)$ is forced to tend to zero. This implies $\gamma_i\rightarrow 0$ contradicting $\gamma_i=1+\epsilon t_i$.

(c) A $\gamma \in F^\times \backslash D^\times$ of trace zero has a representative of the form $\gamma=\sqrt A +\epsilon \gamma_2$ (by abuse of notation). Thus,
\begin{equation*}
 <\phi,\psi>_x=\int_{T\backslash G}\int_T\phi((\sqrt A +\epsilon\gamma_2 t\bar t^{-1})y)~dt~\bar\psi(y)~dy.
\end{equation*}
As $\phi\in\mathcal S(\chi,G)$, there exists an ideal $\wp_K^m$ of $K$ such that for all $y\in G$ and all $l\in \wp_K^m$ one has $\phi((\sqrt A+\epsilon l)y)=\phi(\sqrt A y)$. Let $x=P(\gamma)$ be near zero, i.e. $x$ belongs to an ideal $U$ of $F$ which is given by the obstruction that  $\frac{cl\bar l}{-A}\in U$ implies $l\in\wp_K^m$. For such $x$ one has
\begin{equation*}
 <\phi,\psi>_x=\operatorname{vol}_T(T)\chi(\sqrt A)\int_{T\backslash G}\phi(y)\bar\psi(y)~dy.
\end{equation*}
Taking into account that $x$ must not belong to the image of $P$ (see (a)), one gets
\begin{equation*}
 <\phi,\psi>_x=\frac{1}{2}\operatorname{vol}_T(T)\chi(\sqrt A)\bigl(\phi(y),\psi(y)\bigr)(1+\omega(cx)),
\end{equation*}
where $\bigl(\cdot,\cdot\bigr)$ is the $L^2$-scalar product.

(d) Again, let $\gamma=\sqrt A+\epsilon\gamma_2$ denote a preimage of $x$ under $P$. Then
\begin{equation*}
\int_T\phi(t^{-1}\gamma ty)~dt=\chi(\gamma_2)\int_T\phi((\sqrt A\gamma_2^{-1} +t^{-1}\bar t\epsilon)y)~dt.
\end{equation*}
As $\phi$ is locally constant, by Lemma~\ref{lemma1} there exists $k>0$ such that for $\lvert\gamma_2\rvert>k$ and for $y\in \operatorname{supp} \psi$ one has $\phi((\sqrt A\gamma_2^{-1} +t^{-1}\bar t\epsilon)y)=\phi(t^{-1}\bar t\epsilon y)$. Thus, for $\lvert x\rvert >\lvert cA^{-1}\rvert k^2$,
\begin{equation*}
 <\phi,\psi>_x=\chi(\gamma_2)\int_T\chi(t^{-1}\bar t)~dt\int_{T\backslash G}\phi(\epsilon y)\bar\psi(y)~dy.
\end{equation*}
As $\chi(t^{-1}\bar t)$ defines the trivial character of $T$ if and only if $\chi^2=1$, the statement follows by noticing that in this case $\chi(\gamma_2)=\chi_1(\frac{Ax}{c})$.
\end{proof}

\begin{proof}[Proof of Proposition~\ref{eigenschaften_split}]
Here $D^\times$ is isomorphic to $\operatorname{GL}_2(F)$, an obvious isomorphism is given by embedding $K^\times$ diagonally and sending $\epsilon$ to $\begin{pmatrix}0&1\\1&0\end{pmatrix}$.
Then $P$ is given by
\begin{equation*}
 P\begin{pmatrix}a&b\\c&d\end{pmatrix}=\frac{bc}{ad}.
\end{equation*}
The only value  not contained in the image of $P$ is $1$. A preimage of $x\not=1$ of trace zero is given by
\begin{equation*}
 \gamma=\begin{pmatrix}-1&x\\-1&1\end{pmatrix}.
\end{equation*}
(a) First, one shows that for $\phi\in\mathcal S(\chi,G)$ there is a constant $k>0$ such that for all $\gamma\in\operatorname{supp}\phi$: $\lvert P(\gamma)-1\rvert >k$. Using  Bruhat-Tits decomposition for $\operatorname{SL}_2(F)$, $G=\operatorname{PGL}_2(F)=TNN'\cup TNwN$, where $N$ is the group of uniponent upper triangular matrices, $N'$ its transpose and $w=\begin{pmatrix}0&-1\\1&0\end{pmatrix}$. Thus, there is $c>0$ such that
\begin{eqnarray*}
 \operatorname{supp}\phi &\subset&T\left\{\begin{pmatrix}1&u\\0&1\end{pmatrix}\begin{pmatrix}1&0\\v&1\end{pmatrix}\mid \lvert u\rvert <c, \lvert v\rvert <c\right\}\\
&&\bigcup T\left\{\begin{pmatrix}1&u\\0&1\end{pmatrix}w\begin{pmatrix}1&v\\0&1\end{pmatrix}\mid \lvert u\rvert <c, \lvert v\rvert <c\right\}.
\end{eqnarray*}
On the first set $P$ has the shape $P=\frac{uv}{1+uv}$. On the second one its shape is $P=\frac{uv-1}{uv}$. Thus, for all $\gamma\in\operatorname{supp}\phi$ one has $\lvert P(\gamma)-1\rvert \geq \operatorname{min}\{1,c^{-2}\}$.
Now, one shows that there even is a constant $k>0$ such that $\lvert P(\gamma)-1\rvert >k$ for all $\gamma y\in \operatorname{supp}\phi$ for all $y\in \operatorname{supp}\psi$. This implies that $<\phi,\psi>_x=0$ in the neighborhood $\lvert x-1\rvert <k$ of $1$. One knows there is such a constant $k_y$ for any $y\in \operatorname{supp}\psi$. By Lemma~\ref{lemma1}(a) this constant is valid for all $\tilde y$ in a neighborhood $V_y$. Modulo $T$ the support of $\psi$ can be covered by finitely many  those $V_y$. The minimum of the associated $k_y$ then is the global constant one was looking for.\\
(b) By Lemma~\ref{lemma1}(b), there is for every $x\in F^\times\backslash\{1\}$ a neighborhood $U_x$ such that for  all $y\in \operatorname{supp}\psi$ the inner integral
\begin{equation*}
 \int_T\phi(t^{-1}\gamma(\tilde x)ty)~dt
\end{equation*}
is constant in $\tilde x\in U_x$. Even more the hole local linking number is locally constant on $F^\times\backslash\{1\}$. That it is constant around $1$ as well was part~(a).\\
For (c) and (d) one regards the inner integral separately first. One has for representatives
\begin{eqnarray*}
 t^{-1}\gamma(x)t &=&\begin{pmatrix}a^{-1}&0\\0&1\end{pmatrix}\begin{pmatrix}-1&x\\-1&1\end{pmatrix}\begin{pmatrix}a&0\\0&1\end{pmatrix}\\
&=&\begin{pmatrix}(x-1)&0\\0&1\end{pmatrix}\begin{pmatrix} 1&\frac{x}{a(x-1)}\\0&1\end{pmatrix}\begin{pmatrix}1&0\\-a&1\end{pmatrix}\in K^\times NN'\\
&=&\begin{pmatrix}\frac{1-x}{a}&0\\0&-a\end{pmatrix}\begin{pmatrix}1&\frac{a}{x-1}\\0&1\end{pmatrix}w\begin{pmatrix}1&-a^{-1}\\0&1\end{pmatrix}\in K^\times NwN.
\end{eqnarray*}
As $\operatorname{supp}\phi$ is compact modulo $T$, the intersections $\operatorname{supp}\phi\cap NN'$ and $\operatorname{supp}\phi\cap NwN$ are compact. Write $\phi^y:=\rho(y)\phi$ as a sum $\phi^y=\phi_1^y+\phi_2^y$, $\phi_i^y\in\mathcal S(\chi,G)$, with $\operatorname{supp}\phi_1^y\subset TNN'$ and $\operatorname{supp}\phi_2^y\subset TNwN$. Using the transformation under $T$ by $\chi$, one can actually regard $\phi_i^y$, $i=1,2$, as functions on $F\oplus F$ identifying $N$ with $F$. Thus, $\phi_i^y\in \mathcal S(F\oplus F)$. For the inner integral one gets the formula
\begin{eqnarray}
 \int_T\phi(t^{-1}\gamma(x)ty)~dt &=&\chi_1(x-1)\int_{F^\times}\phi_1^y(\frac{x}{a(x-1)},-a)~d^\times a\label{inners_integral_umformung}\\
&+&\chi_1(1-x)\int_{F^\times}\chi_1(a^{-2})\phi_2^y(\frac{a}{x-1},-a^{-1})~d^\times a.\nonumber
\end{eqnarray}
(c) 
One has $\chi_1(x-1)=\chi_1(-1)$ if $x\in\wp^{c(\chi_1)}$, the leader of $\chi_1$. By lemma~\ref{lemma2}, the first integral of (\ref{inners_integral_umformung}) for small $x$ equals 
\begin{equation*}
 A_1(\frac{x}{x-1})+A_2(\frac{x}{x-1})v(\frac{x}{x-1}),
\end{equation*}
where $A_1,A_2$ are locally constant functions on a neighborhood of zero depending on $y$. $\tilde A_i(x):=A_i(\frac{x}{x-1})$ are locally constant functions on a neighborhood $U_1$ of zero as well.
The second integral of (\ref{inners_integral_umformung})  is constant on a neighborhood $U_2$ of $x=0$ depending on $y$, as $\phi_2^y$ is locally constant for $(x-1)^{-1}\rightarrow -1$. Thus, the complete inner integral can be expressed on $U_y:=\wp^{c(\chi_1)}\cap U_1\cap U_2$ as
\begin{equation*}
 A_y(x):= \tilde A_1(x)+\tilde A_2(x)v(x) +B.
\end{equation*}
By lemma~\ref{lemma1}(a), there is a neighborhood $V_y$ of $y$ where the inner integral has the same value. Take $V_y$ that small that $\psi$ is constant there, too, and cover $\operatorname{supp}\psi$ modulo $T$ by finitely many such $V_y$, i.e.  $y\in I$, $\lvert I\rvert$ finite. The local linking number for $x\in U=\cap_{y\in I}U_y$ now is computed as
\begin{equation*}
 <\phi,\psi>_x=\sum_{y\in I}\operatorname{vol}_{T\backslash G}(TV_y)\bar\psi(y)A_y(x).
\end{equation*}
That is, there are locally constant functions $B_1,B_2$ on $U$ such that for $x\in U$
\begin{equation*}
 <\phi,\psi>_x=B_1(x)+B_2(x)v(x).
\end{equation*}
(d) Let $x^{-1}\in\wp^{c(\chi_1)}$, then $\chi_1(x-1)=\chi(x)$. As $\phi_1^y$ is locally constant, the first integral of (\ref{inners_integral_umformung}) equals a locally constant function $A_1(x^{-1})$ for $x^{-1}\in U_1$, a neighborhood of zero depending on $y$. For the second integral, one has to differentiate between $\chi_1^2=1$ or not. To start with, let $\chi_1^2\not=1$. Applying lemma~\ref{lemma2}(b) for $\eta=\chi_1^2$, one gets locally constant functions $A_2,A_3$ on a neigborhood $U_2$ of zero depending on $y$ such that the second integral equals $A_2(x^{-1})+\chi_1^2(x^{-1})A_3(x^{-1})$. Thus, for fixed $y$ the inner integral for $x^{-1}\in U_y=U_1\cap U_2\cap\wp^{c(\chi_1)}$ is given by
\begin{equation*}
 A_y(x):=\int_T\phi^y(t^{-1}\gamma(c)t)~dt=\chi_1(x)\bigl(A_1(x^{-1})+A_2(x^{-1})+A_3(x^{-1})\chi_1^{-1}(x)\bigr).
\end{equation*}
Proceeding as in (c), one gets the assertion. \\
Now, let $\chi_1^2=1$. By lemma~\ref{lemma2}(a), one has locally constant functions $A_2,A_3$ on a neighborhood $U_2$ of zero such that for $x^{-1}\in U$ the second integral of (\ref{inners_integral_umformung}) is given by $A_2(x^{-1})+A_2(x^{-1})v(x)$. Thus, for $x^{-1}\in U_y:=U_1\cap U_2\cap\wp^{c(\chi_1)}$ the inner integral is given by
\begin{equation*}
 A_y(x):=\chi_1(x)\bigl( A_1(x^{-1})+A_2(x^{-1})+A_3(x^{-1})v(x)\bigr).
\end{equation*}
The term $A_3(x^{-1})v(x)$  by lemma~\ref{lemma2}(a) is obtainted from functions $\phi_2^y(a,b)$ having the shape $\mathbf 1_{\wp^n}(a)\mathbf 1_{\wp^n}(b)$ around zero. Those function can only occure if $y$ is contained in $\operatorname{supp}\phi$.
Again proceeding as in part (c), the local linking number for $x^{-1}$ in a sufficently small neighborhood $U$ of zero is 
\begin{equation*}
 <\phi,\psi>_x=\chi_1(x)\bigl(B_1(x^{-1})+B_2(x^{-1})v(x)\bigr),
\end{equation*}
where $B_1,B_2$ are locally constant on $U$ and $B_2$ doesn't vanish only if $\operatorname{id}\in (\operatorname{supp}\phi)(\operatorname{supp}\psi)^{-1}$.
\end{proof}
\begin{prop}\label{charakterisierung_der_LLN}
The properties (a) to (d) of proposition~\ref{eigenschaften_nonsplit} resp. \ref{eigenschaften_split} characterize the local linking numbers. That is, given a function on $F^\times$ satisfying these properties, one can realize it as a local linking number.
\end{prop}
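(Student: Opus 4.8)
The plan is to prove the converse of Propositions~\ref{eigenschaften_nonsplit} and~\ref{eigenschaften_split}, i.e.\ that the sesquilinear map $(\phi,\psi)\mapsto\langle\phi,\psi\rangle_\bullet$ already exhausts the space $\mathcal F$ of functions on $F^\times$ satisfying (a)--(d). The first move is to fix once and for all $\psi_0\in\mathcal S(\chi,G)$ to be the $\chi$-equivariant function supported on $TU$ with $\psi_0\equiv1$ on a small neighbourhood $U$ of $\operatorname{id}$. Using Lemma~\ref{lemma1} (uniform local constancy of a Schwartz function on its support, which is compact modulo $T$), for $U$ small enough and for \emph{every} $x\neq1$ one gets $\langle\phi,\psi_0\rangle_x=c_0\,\iota_\phi(x)$, where $c_0=\vol_{T\backslash G}(TU)\neq0$ and $\iota_\phi(x):=\int_T\phi(t^{-1}\gamma(x)t)\,dt$ is the orbital integral. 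Since $c_0$ does not depend on $\phi$ and $\iota_\phi$ is additive in $\phi$, it now suffices to realise every $f\in\mathcal F$ as a finite sum $\iota_{\phi_0}+\iota_{\phi_b}+\iota_{\phi_\infty}$ of orbital integrals, one for each of the three regions singled out by (b)--(d); then $f=\langle c_0^{-1}(\phi_0+\phi_b+\phi_\infty),\psi_0\rangle_\bullet$.

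Next I would treat the three pieces. On the complement of fixed small neighbourhoods of $0,1,\infty$ the function $f$ is locally constant with compact support, hence a finite $\mathbb C$-combination of indicators $\mathbf 1_{x_0(1+\wp^n)}$ with $x_0$ a regular value of $P$; such a bump is produced by taking $\phi_b$ to be the $\chi$-equivariant indicator of a thin $T$-tube about a trace-zero $\gamma_0$ with $P(\gamma_0)=x_0$ — well defined once the tube is thin enough (using that $T\times T$ acts freely on the regular double coset and that $\chi$ is locally constant) — and then running backwards the support/local-constancy computation from the proofs of~\ref{eigenschaften_nonsplit}(b) and~\ref{eigenschaften_split}(b). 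For the piece near $0$ one instead places $\phi_0$ about a point of the torus: with a trace-zero representative written as $\gamma(x)=\sqrt A+\epsilon\gamma_2$ (resp.\ $\begin{pmatrix}-1&x\\-1&1\end{pmatrix}$ in the split case) one has $t^{-1}\gamma(x)t=\sqrt A+\epsilon\gamma_2u$ with $u\in K^1$ and $\NN(\gamma_2)=-Ax/c$, so $\iota_{\phi_0}(x)$ is the average of $\phi_0$ over the $K^1$-orbit of $\gamma_2$; prescribing $\phi_0$ on the slice $\{\sqrt A+\epsilon z\}$ to be an arbitrary locally constant function of $|z|$ (resp.\ of $v(z)$) reproduces the factor $A(x)\bigl(1+\omega(cx)\bigr)$ of~\ref{eigenschaften_nonsplit}(c) (resp.\ $A_1(x)+A_2(x)v(x)$ of~\ref{eigenschaften_split}(c)), the $\tfrac12(1+\omega(cx))$ and the restriction to the range of $P$ being forced by the convention $\langle\cdot\rangle_x=0$ for $x\notin\operatorname{im}P$ (Definition~\ref{def_lln}).

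For the piece near $\infty$ one places $\phi_\infty$ about $\epsilon T$: there $t^{-1}\gamma(x)t=\epsilon\gamma_2u=\overline{\gamma_2u}\,\epsilon$ up to a locally constant error, whence $\iota_{\phi_\infty}(x)=\phi_\infty(\epsilon)\,\chi(\bar\gamma_2)\int_{K^1}\chi(\bar u)\,du$, which vanishes unless $\chi|_{K^1}=1$, i.e.\ unless $\chi^2=1$ (Lemma~\ref{lemma_chi_quadratisch}), and in that case is a constant multiple of $\chi_1(x)$, matching~\ref{eigenschaften_nonsplit}(d); in the split case, with $T\cong F^\times$ non-compact, the same construction delivers the $\chi_1(x)B_1(x^{-1})+\chi_1^{-1}(x)B_2(x^{-1})$, resp.\ $\chi_1(x)\bigl(B_1(x^{-1})+B_2(x^{-1})v(x)\bigr)$, of~\ref{eigenschaften_split}(d), the $v(x)$-term being available exactly when $\phi_\infty$ can be chosen so that $\operatorname{id}\in(\operatorname{supp}\phi_\infty)(\operatorname{supp}\psi_0)^{-1}$. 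Choosing the three neighbourhoods disjoint, the three orbital integrals have disjoint $x$-supports, so their sum has orbital integral exactly $f$, which completes the construction.

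I expect the main obstacle to be the split case. There both $T$ and $G$ are non-compact, so the compact $K^1$-orbit averages of the field case are replaced by genuine non-compact integrals over $F^\times$ of the shape analysed in Lemma~\ref{lemma2}, and matching the precise asymptotic profiles of~\ref{eigenschaften_split}(c),(d) — in particular the dichotomy $\chi_1^2=1$ versus $\chi_1^2\neq1$ and the fine clause that the $v(x)$-term occurs iff $\operatorname{id}\in(\operatorname{supp}\phi)(\operatorname{supp}\psi)^{-1}$ — amounts to running, in reverse, the same $\wp$-adic bookkeeping as in the proof of Proposition~\ref{eigenschaften_split}. Everything else is a finite, explicit choice of Schwartz data together with the bilinearity of $\langle\,,\,\rangle$ in its first argument.
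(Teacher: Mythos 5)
Your argument is essentially the paper's proof: fix a thin $\chi$-equivariant bump $\psi$ supported near the identity so that the pairing reduces to (a volume times) the orbital integral $\int_T\phi(t^{-1}\gamma(x)t)\,dt$, decompose the target function into a compactly supported part away from $0,1,\infty$ plus germs at $0$ and $\infty$, and realise each piece by taking $\phi$ to be $\chi\cdot\mathbf 1$ of a thin fundamental tube about a trace-zero preimage $\gamma_0$ (resp.\ about $\sqrt A$ for the germ at $0$, resp.\ about $\epsilon T$ for the germ at $\infty$), finishing by linearity in the first argument and disjointness of supports. The paper carries this out with the same tube sets $C_jU$, $C_0U$, $C_1U$ and likewise treats the field case in full while deferring the split-case bookkeeping, so the two proofs match in both strategy and level of detail.
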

The  proof of proposition~\ref{charakterisierung_der_LLN} is totally constructive. Let us first describe the appoach in general before going into detail in the case of a field extension $K/F$. The case of a split algebra $K=F\oplus F$ will be omitted and referred to~\cite{diss} chap.~2, as the computations there are quite similar to those presented here and straighed forward after them. 

Firstly,  choose a describtion of the function $H$ satisfying the properties (a) to (d) in the following manner
\begin{equation*}
 H(x)=\mathbf 1_{c\operatorname{N}}(x)\bigl(A_0(x)\mathbf 1_{V_0}(x)+A_1(x)\mathbf 1_{V_1}(x) +\sum_{j=2}^{M} H(x_j)\mathbf 1_{V_j}(x)\bigr),
\end{equation*}
where for $j=2,\dots, M$
\begin{equation*}
 V_j=x_j(1+\wp_F^{n_j})
\end{equation*}
are open sets in $F^\times$ on which $H$ is  constant. Further,
\begin{equation*}
 V_0=\wp_F^{n_0} \quad \textrm{ resp. } \quad V_1=F\backslash \wp^{-n_1}
\end{equation*}
are neighborhoods of $0$ (resp. $\infty$) where $H$ is characterized by $A_0$ (resp. $A_1$) according to property (c) (resp. (d)). One can assume without loss of generality that $n_j>0$ for $j=0, \dots , M$ and $V_i\cap V_j=\emptyset$ for $i\not= j$.\\
Secondly, construct functions $\phi_j$, $j=0,\dots, M$, and one function $\psi$ in $\mathcal S(\chi,G)$ such that $\operatorname{supp}\phi_i\cap\operatorname{supp}\phi_j=\emptyset$ if $i\not=j$ and such that
\begin{equation*}
 <\phi_j,\psi>_x=H(x_j)\mathbf 1_{V_j}(x)\quad\textrm{ resp. } <\phi_j,\psi>_x=A_j(x)\mathbf 1_{V_j}(x).
\end{equation*}
There is essentially one possibility to construct such functions in $\mathcal S(\chi,G)$: Take a compact open subset $C$ of $G$ which is {\bf fundamental} for $\chi$, i.e. if $t\in T$ and $c\in C$ as well as $tc\in C$, then $\chi(t)=1$. Then the function $\phi =\chi\cdot\mathbf 1_C$ given by $\phi(tg)=\chi(t)\mathbf 1_C(g)$ is well defined in $\mathcal S(\chi,G)$ with support $TC$.\\
The function $\psi$ is now chosen as $\psi=\chi\cdot \mathbf 1_U$, where $U$ is a compact open subgroup of $G$ that small that for $j=0,\dots,M$
\begin{equation*}
 P(P^{-1}(V_j)U)= V_j\cap c\operatorname N.
\end{equation*}
For $j\geq 2$ now take $C_j\subset P^{-1}(V_j)$ compact such that $C_jU$ is fundamental and $P(C_jU)=V_j$ and define $\phi_j:=H(x_j)\cdot\chi\cdot\mathbf 1_{C_jU}$.
The stalks of zero and infinity are constructed in a similar manner.\\
Thirdly, as the local linking numbers are linear in the first component and as the supports of the $\phi_j$ are disjoint by construction, one gets
\begin{equation*}
 H(x)=<\sum_{j=0}^{M}\phi_j,\psi>_x.
\end{equation*}

\begin{proof}[Proof of Proposition~\ref{charakterisierung_der_LLN} in the case $K$ a field]
Let $K=F(\sqrt A)$ be a quadratic field extension. Let the function $H$ satisfying (a) to (d) of Prop.~\ref{eigenschaften_nonsplit} be given as
\begin{equation*}
 H(x)=\mathbf 1_{c\operatorname N}(x)\bigl(A_0(x)\mathbf 1_{V_0}(x)+A_1(x)\mathbf 1_{V_1}(x) +\sum_{j=2}^{M} H(x_j)\mathbf 1_{V_j}(x)\bigr),
\end{equation*}
where
\begin{eqnarray*}
 && V_0=\wp^{n_0} \textrm{ and } A_0(x)=a_0,\\
 && V_1=F\backslash \wp^{-n_1} \textrm{ and } A_1(x)=\left\{\begin{matrix}\chi_1(x)a_1, \textrm{ if } \chi^2=1\\
                                                             0, \textrm{ if } \chi^2\not=1\\
                                                            \end{matrix},\right.\\
 && V_j=x_j(1+\wp^{n_j}) \textrm{ for } j=2,\dots, M,
\end{eqnarray*}
with $a_0, a_1, H(x_j)\in\mathbb C$, and $n_j>0$ for $j=0,\dots,M$. One can further assume
\begin{eqnarray*}
 n_0-v(\frac{c}{A})>0, \quad n_1+v(\frac{c}{A})>0 \textrm{ and both even,}
\end{eqnarray*}
as well as $V_i\cap V_j=\emptyset$ for $i\not= j$. One defines
\begin{eqnarray*}
 \tilde n_0 &=& \left\{\begin{matrix}\frac{1}{2}(n_0-v(\frac{c}{A})), & \textrm{if } K/F \textrm{ unramified}\\
                        n_0-v(\frac{c}{A}), &\textrm{if } K/F \textrm{ ramified}
                       \end{matrix},\right.
\end{eqnarray*}
\begin{eqnarray*}
 \tilde n_1 &=& \left\{\begin{matrix}\frac{1}{2}(n_1+v(\frac{c}{A})), & \textrm{if } K/F \textrm{ unramified}\\
                        n_1+v(\frac{c}{A}), &\textrm{if } K/F \textrm{ ramified}
                       \end{matrix},\right.
\end{eqnarray*}
as well as for $j=2,\dots,M$
\begin{equation*}
 \tilde n_j = \left\{\begin{matrix}n_j, & \textrm{if } K/F \textrm{ unramified}\\
                        2n_j, &\textrm{if } K/F \textrm{ ramified}
                       \end{matrix}.\right.
\end{equation*}
Then $\operatorname N(1+\wp_K^{\tilde n_j})=1+\wp_F^{n_j}$, $j\geq 2$, and $\wp_K$  the prime ideal of $K$. Define
\begin{equation*}
 U:= 1+\wp_K^k+\epsilon\wp_K^m,
\end{equation*}
where $k>0$ and $m>0$ are chosen such that
\begin{eqnarray}
&& k\geq c(\chi),\quad m\geq c(\chi)\nonumber\\
&& k\geq \tilde n_j,\quad m\geq \tilde n_j+1, \textrm{ for } j=0,\dots,M,\nonumber\\
&&m\geq c(\chi)+1-\frac{1}{2}v(x_j), \textrm{ for } j=2,\dots,M,\nonumber\\
&&m\geq \tilde n_j+1+\frac{1}{2}\lvert v(x_j)\rvert, \textrm{ for } j=2,\dots,M.\label{k-m-bedingungen}
\end{eqnarray}
As $k,m>0$ and $k,m\geq c(\chi)$, $U$ is fundamental. Define 
\begin{equation*}
 \psi:=\chi\cdot\mathbf 1_U.
\end{equation*}
Now realize the stalks for $x_j$, $j\geq 2$, as local linking numbers. To begin with, let $\sqrt A +\epsilon\gamma_j$ be a preimage of $x_j$, i.e.
\begin{equation*}
 P(\sqrt A+\epsilon\gamma_j)=\frac{c\operatorname N(\gamma_j)}{-A}=x_j.
\end{equation*}
Then the preimage of $V_j$ is given by
\begin{equation*}
 P^{-1}(V_j)=T\bigl(\sqrt A+\epsilon\gamma_j(1+\wp_K^{\tilde n_j})\bigr)T=T\bigl(\sqrt A+\epsilon\gamma_j(1+\wp_K^{\tilde n_j})\operatorname N_K^1\bigr).
\end{equation*}
Let $C_j:=\sqrt A +\epsilon\gamma_j(1+\wp_K^{\tilde n_j})\operatorname N_K^1$ and look at the compact open set $C_jU$,
\begin{equation*}
 C_jU=\sqrt A(1+\wp_K^k)+c\bar \gamma_j\wp_K^m+\epsilon\bigl(\gamma_j(1+\wp_K^k+\wp_K^{\tilde n_j})\operatorname N_K^1+\sqrt A\wp_K^m\bigr).
\end{equation*}
Due to the choices (\ref{k-m-bedingungen}), $C_jU$ is fundamental. To prove this, one has to check that if $t\in T$, $c\in C_j$ and $tc\in C_jU$, then $\chi(t)=1$ (observe that $U$ is a group). So, let
\begin{equation*}
 tc=t\sqrt A +\epsilon\bar t\gamma_j(1+\pi_K^{\tilde n_j}c_1)l\in C_jU.
\end{equation*}
The first component forces $t\in 1+\wp_K^k+\frac{c}{A}\bar\gamma_j\wp_K^m$. For those $t$, the choices (\ref{k-m-bedingungen}) imply $\chi(t)=1$. For the image $P(C_jU)$ one finds again by (\ref{k-m-bedingungen})
\begin{equation*}
 P(C_jU)=\frac{c\operatorname N(\gamma_j)\operatorname N(1+\wp_K^k+\wp_K^{\tilde n_j}+\wp_K^m\frac{\sqrt A}{\gamma_j})}{-A\operatorname N(1+\wp_K^k+\frac{c}{\sqrt A}\bar\gamma_j\wp_K^m)}=V_j.
\end{equation*}
The functions $\phi_j:= \chi\cdot\mathbf 1_{C_jU}\in\mathcal S(\chi,G)$ are now well defined. Let us compute the local linking number
\begin{equation*}
 <\phi_j,\psi>_x=\int_{T\backslash G}\int_T\phi_j(t^{-1}\gamma(x)ty)~dt~\bar\psi(y)~dy.
\end{equation*}
The integrand doesn't vanish only if there is $s\in K^\times$ such that
\begin{equation*}
 st^{-1}\gamma(x)t=s\sqrt A+\epsilon\bar s\gamma_2(x)t\bar t^{-1}\in C_jU.
\end{equation*}
The first component inplies $s\in 1+\wp_K^{\tilde n_j}$. The second component implies $\gamma_2(x)\in\gamma_j(1+\wp_K^{\tilde n_j})\operatorname N_K^1$, which is equivalent to $\gamma(x)\in C_jU$ or $x\in V_j$. In this case one can take $s=1$ and gets
%\begin{eqnarray*}
% <\phi_j,\psi>_x&=&\mathbf 1_{V_j}(x)\int_{T\backslash G}\int_T 1~dt~\bar\psi(y)~dy\\
%&=& \mathbf 1_{V_j}(x)\operatorname{vol}_T(T)\operatorname{vol}_G(U).
%\end{eqnarray*}
\begin{equation*}
 <\phi_j,\psi>_x=\mathbf 1_{V_j}(x)\int_{T\backslash G}\int_T 1~dt~\bar\psi(y)~dy= \mathbf 1_{V_j}(x)\operatorname{vol}_T(T)\operatorname{vol}_G(U).
\end{equation*}
Normalizing $\tilde \phi_j:=\frac{H(x_j)}{\operatorname{vol}_T(T)\operatorname{vol}_G(U)}\phi_j$, one finally gets
$%\begin{equation*}
 H\vert_{V_j}(x)=<\tilde\phi_j,\psi>_x$.
%\end{equation*}
\\
Now regard the stalk of zero. One findes $P(\sqrt A+\epsilon\wp_K^{\tilde n_0})=\wp_F^{n_0}\cap c\operatorname N$.
Define $C_0:=\sqrt A+\epsilon\wp_K^{\tilde n_0}$. The preimage $P^{-1}(V_0)$ equals $TC_0T=TC_0$. The set open and compact set $C_0U$ is easily seen to be fundamental and to satisfy $P(C_0U)=V_0\cap cN$.
 Define $\phi_0:=\chi\cdot\mathbf 1_{C_0U}$ and compute the local linking number $<\phi_0,\psi>_x$. Again, this doesn't vanish only if there is $s\in K^\times$ such that
\begin{equation*}
 st^{-1}\gamma(x)t=s\sqrt A+\epsilon \bar s\gamma_2(x)t\bar t^{-1}\in C_0U.
\end{equation*}
This forces $\gamma_2(x)\in\wp_K^{\tilde n_0}$. Assuming this, one can take $s=1$ and gets
\begin{equation*}
 <\phi_0,\psi>_x=\mathbf 1_{V_0\cap c\operatorname N}(x)\operatorname{vol}_T(T)\operatorname{vol}_G(U).
\end{equation*}
That is, $H\vert_{V_0}(x)=a_0=<\frac{a_0}{\operatorname{vol}_T(T)\operatorname{vol}_G(U)}\phi_0,\psi>_x$.\\
It remains to construct the stalk of infinity. One can assume that $\chi^2=1$, as otherwise the function vanishes for big $x$. Thus, $\chi=\chi_1\circ \operatorname N$. The preimage of $V_1=F\backslash\wp_F^{-n_1}$ is given by
\begin{equation*}
 P^{-1}(V_1)=T\bigl(\sqrt A+\epsilon(\wp_K^{\tilde n_1})^{-1}\bigr)T=T\bigl(\sqrt A\wp_K^{\tilde n_1}+\epsilon\operatorname N_K^1\bigr).
\end{equation*}
Take $C_1=\sqrt A\wp_K^{\tilde n_1}+\epsilon\operatorname N_K^1$ to get 
a fundamental compact open set
%a compact open set
\begin{equation*}
 C_1U=\sqrt A\wp_K^{\tilde n_1}+c\wp_K^m+\epsilon\bigl(\operatorname N_K^1(1+\wp_K^k)+\sqrt A\wp_K^{m+\tilde n_1}\bigr),
\end{equation*}
By the choices (\ref{k-m-bedingungen}) one gets $P(C_1U)=V_1\cap c\operatorname N$.
Taking $\phi_1:=\chi\cdot\mathbf 1_{C_1U}$ this time, we get
$ H\vert_{V_1}(x)=\frac{a_1}{\operatorname{vol}_T(T)\operatorname{vol}_G(U)}<\phi_1,\psi>_x$.
\end{proof}
%%%%%%%%%%%%%%%%%%%%%%%%%%%%%%%%%%%%%%%%%%%%%%%%%%%%%%%%%%%%%%%%%%%%%%%%%%%%%%%%%%%%%%%%%%%%%%%%%%%
%%%
%%%%%%%%%%%%%%%%%%%%%%%%%%%%%%%%%%%%%%%%%%%%%%%%%%%%%%%%%%%%%%%%%%%%%%%%%%%%%%%%%%%%%%%%%%%%%%%%%%%%%
\section{A Matching}\label{section_matching}
Having characterized the local linking numbers, we can easily compare them to the Whittaker products of Definition~\ref{def_whittaker_prod}. For this, we use the parametrization $\xi=\frac{x}{x-1}$ rather than $x$ itself. The properties of the local linking numbers (Propositions~\ref{eigenschaften_nonsplit} and \ref{eigenschaften_split}) transform accordingly. For example, the property of vanishing around $x=1$ means that the local linking numbers as functions in $\xi$ have compact support. The behavior around infinity is replaced by the behavior around $\xi=1$.

The reason why the parametrization $\xi$ is not used all over this paper is simply that it was more comfortable to do the calculations using the coordinate $x$. In view of section~\ref{section_translation}, this is reasonable.
\begin{thm}\label{satz_matching}
 The local linking numbers and the Whittaker products match, that is: If $\eta=1-\xi$ and if $\omega(-\xi\eta)=(-1)^{\delta(D)}$, then
\begin{eqnarray*}
&&\Bigl\{\lvert\xi\eta\rvert^{\frac{1}{2}}<\phi,\psi>_{x=\frac{\xi}{\xi-1}}\Big\vert \phi,\psi\in\mathcal S(\chi,G)\Bigr\} =\\
&&\hspace*{3cm} \Bigl\{W_\theta(\eta)W_E(\xi)\Big\vert W_\theta\in\mathcal K(\Pi(\chi)),W_E\in\mathcal K(\Pi(1,\omega))\Bigr\}.
\end{eqnarray*}
\end{thm}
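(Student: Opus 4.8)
The strategy is to compare the two sides of the asserted equality of sets by exploiting the explicit descriptions both have already received in the preceding sections. For the local linking numbers we have Propositions~\ref{eigenschaften_nonsplit}, \ref{eigenschaften_split} together with the realization result Proposition~\ref{charakterisierung_der_LLN}: these say precisely which functions on $F^\times$ arise as $<\phi,\psi>_x$. For the Whittaker products we have the Kirillov-model descriptions, namely Proposition~\ref{prop_characterisierung_theta_functionen} for $\mathcal K(\Pi(\chi))$ and Proposition~\ref{prop_characterisierung_eisenstein_functionen} for $\mathcal K(\Pi(1,\omega))$. So the proof amounts to performing the substitution $x=\frac{\xi}{\xi-1}$, equivalently $\xi=\frac{x}{x-1}$ and $\eta=1-\xi=\frac{-1}{x-1}$, in the characterizing properties of the local linking numbers, multiplying by $\lvert\xi\eta\rvert^{1/2}$, and checking term by term that the resulting function space is exactly the space of products $W_\theta(\eta)W_E(\xi)$ as $W_\theta, W_E$ range over the two Kirillov spaces. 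One carries this out in the three cases dictated by the earlier propositions: $K/F$ a field extension with $\chi$ non-quadratic; $K/F$ a field extension with $\chi^2=1$; and $K/F$ split.

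\textbf{Key steps.} First I would record the dictionary between the coordinates: under $x=\frac{\xi}{\xi-1}$ the point $x=1$ corresponds to $\xi=\infty$, $x=0$ to $\xi=0$, $x=\infty$ to $\xi=1$, and on norm groups $\lvert x\rvert\to 0$ translates to $\xi\to0$ while $\lvert x\rvert\to\infty$ translates to $\xi\to1$; also $v(x)=v(\xi)-v(\eta)$ and $\omega(cx)$ relates to $\omega(-\xi\eta)$ via the hypothesis $\omega(-\xi\eta)=(-1)^{\delta(D)}$, which is exactly the condition that reconciles the support constraint ``$x\in c\operatorname N$'' (present when $K/F$ is a field, Proposition~\ref{eigenschaften_nonsplit}(a)) with the product structure on the analytic side. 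Then, in each case: (i) away from $0,1,\infty$ in $\xi$, the local linking numbers are the locally constant compactly supported functions vanishing near $\xi=\infty$ — matching $\mathcal S(F^\times)\otimes\mathcal S(F^\times)$ after multiplication by the smooth nonvanishing factor $\lvert\xi\eta\rvert^{1/2}$, since products $\mathcal S(F^\times)\cdot\mathcal S(F^\times)$ under the map $\xi\mapsto(\xi,\eta=1-\xi)$ span the locally constant compactly supported functions away from $\xi\in\{0,1\}$; (ii) the stalk at $\xi=0$ (i.e.\ $x\to0$): property (c) of Proposition~\ref{eigenschaften_nonsplit} gives $A(x)(1+\omega(cx))$, which after the coordinate change and the factor $\lvert\xi\eta\rvert^{1/2}$ becomes exactly the germ $\lvert\xi\rvert^{1/2}(a_1+a_2\omega(\xi))$ of $W_E$ times $\lvert\eta\rvert^{1/2}\chi_1(\eta)$-type germ of $W_\theta$, using that near $\xi=0$ one has $\eta\to1$ and $\chi_1(\eta)\to\chi_1(1)$, so $W_\theta$ is locally constant there; similarly the split-case germ $A_1(x)+A_2(x)v(x)$ matches $\lvert\xi\rvert^{1/2}(a_1+a_2v(\xi))$ from Proposition~\ref{prop_characterisierung_eisenstein_functionen}; (iii) the stalk at $\xi=1$ (i.e.\ $x\to\infty$): property (d) of Propositions~\ref{eigenschaften_nonsplit}/\ref{eigenschaften_split} gives the germ governed by $\chi_1$, respectively $\chi_1(x)B_1+\chi_1^{-1}(x)B_2$ or $\chi_1(x)(B_1+B_2 v(x))$, and this is exactly the germ of $W_\theta(\eta)$ at $\eta\to0$ listed in Proposition~\ref{prop_characterisierung_theta_functionen}(b),(c) after translating $\chi_1(x)=\chi_1(-\eta^{-1})$ and absorbing constants, while near $\xi=1$ the Eisenstein factor $W_E(\xi)$ is locally constant. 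In all three cases one checks the dimension/parameter count of germs matches on both sides, and that the vanishing statements (e.g.\ ``local linking number vanishes near $\infty$ if $\chi^2\neq1$'') correspond to ``$\mathcal K(\Pi(\chi))$ has zero stalk at $0$'' in the supercuspidal/non-quadratic case (Proposition~\ref{prop_characterisierung_theta_functionen}(a)).

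\textbf{Main obstacle.} The delicate point is matching the \emph{stalks} at $\xi=0$ and $\xi=1$ with the correct constraints linking the two factors. On the linking-number side the germs at $0$ and at $\infty$ are not independent degrees of freedom once $\phi,\psi$ are fixed globally, yet Proposition~\ref{charakterisierung_der_LLN} guarantees every admissible germ is realizable, so the delicate check is that the \emph{coupling} imposed on the analytic side — namely that $W_E$ is a single Kirillov function evaluated at $\xi$ and $W_\theta$ a single Kirillov function evaluated at $\eta=1-\xi$, so that the germ at $\xi=0$ sees the full $W_E$-stalk but only $W_\theta(1)$, and symmetrically at $\xi=1$ — is compatible with the linking-number characterization. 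Concretely, one must verify that the extra factor $\lvert\xi\eta\rvert^{1/2}=\lvert\xi\rvert^{1/2}\lvert\eta\rvert^{1/2}$ distributes correctly, contributing $\lvert\xi\rvert^{1/2}$ to the Eisenstein germ and $\lvert\eta\rvert^{1/2}$ to the theta germ at each end, and that the norm-image condition $\omega(-\xi\eta)=(-1)^{\delta(D)}$ — which on the geometric side is forced by the quaternion algebra $D$ and the requirement $x\in c\operatorname N$ — is exactly what restricts the analytic products to those actually hit. Verifying that the parity constraint $\omega(-\xi\eta)=(-1)^{\delta(D)}$ is simultaneously necessary and sufficient, in both the split and nonsplit $D$ cases and for both field and split $K$, is the step where one has to be most careful; the rest is bookkeeping against the already-established characterizations.
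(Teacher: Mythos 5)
Your proposal is correct and follows essentially the same route as the paper: the paper's proof of Theorem~\ref{satz_matching} is exactly a term-by-term comparison, after the substitution $\xi=\frac{x}{x-1}$, of the characterizing properties of the local linking numbers (Propositions~\ref{eigenschaften_nonsplit}, \ref{eigenschaften_split}, with Proposition~\ref{charakterisierung_der_LLN} supplying realizability for the reverse inclusion) with the Kirillov-model descriptions of Propositions~\ref{prop_characterisierung_theta_functionen} and \ref{prop_characterisierung_eisenstein_functionen}, including the observation that $\omega(-\xi\eta)=\omega(x)$ encodes the condition $x\in c\operatorname N$. Your write-up simply spells out the germ comparisons at $\xi=0$ and $\xi=1$ in more detail than the paper does.
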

Recall the definition~(\ref{def_delta(D)}) of $\delta(D)$. Notice that the term $\omega(-\xi\eta)$ is just $\omega(x)$.
\begin{proof}[Proof of Theorem~\ref{satz_matching}]
 The Whittaker products are products of Kirillov functions characterized in Propositions~\ref{prop_characterisierung_theta_functionen} and \ref{prop_characterisierung_eisenstein_functionen}. Comparing their properties  to those of the local linking numbers (Propositions~\ref{eigenschaften_nonsplit} resp. \ref{eigenschaften_split}) yields the Proposition. For example, by Prop.~\ref{prop_characterisierung_theta_functionen} for $K/F$ split and $\chi_1^2\not=1$, the Whittaker products for $\xi\to 1$ ($\eta\to 0$) are given by
\begin{equation*}
 \lvert\xi\eta\rvert^{\frac{1}{2}}\left(a_1\chi_1(\eta)+a_2\chi_1^{-1}(\eta)\right),
\end{equation*}
which corresponds to Prop~\ref{eigenschaften_split}~(d). For $\xi\to 0$ ($\eta\to 1$) we apply Prop.~\ref{prop_characterisierung_eisenstein_functionen}: The Whittaker products have the shape
 $ \lvert\xi\eta\rvert^{\frac{1}{2}}\left(a_1+a_2v(\xi)\right)$.
This is property~(c) of Prop.~\ref{eigenschaften_split}. Away from $\xi\to 1$ and $\xi\to 0$, the Whittaker products are locally constant with compact support. This is equivalent to (a) and (b) of Prop.~\ref{eigenschaften_split}.
\end{proof}

%%%%%%%%%%%%%%%%%%%%%%%%%%%%%%%%%%%%%%%%%%%%%%%%%%%%%%%%%%%%%%%%%%%%%%%%%%%%%
%
% %
% %
% %
% %
% %
% 
% %%
% %%%%%%%%%%%%%%%%%%%%%%%%%%%%%%%%%%%%%%%%%%%%%%%%%%%%%%%%%%%%%%%%%%%%%%%%%%%%%%%%
\section{Translated linking numbers}\label{section_translation}

In the remaining, the quaternion algebra $D$  is assumed to be split, that is $G=F^\times\backslash D^\times$ is isomorphic to the projective group $\operatorname{PGL}_2(F)$.
The aim is to give an operator on the local linking numbers realizing the Hecke operator on the analytic side. 
As the analytic Hecke operator essentially is given by translation by $b\in F^\times$ (Proposition~\ref{prop_analytischer_Hecke_allgemein}), the first  candidate for this study surely is the translation by $b$, i.e.
\begin{equation*}%\label{gl_lln_transl_allg}
 <\phi,\begin{pmatrix}b&0\\0&1\end{pmatrix}.\psi>_x=\int_{T\backslash G}\int_T\phi(t^{-1}\gamma(x)ty)~dt~\bar\psi(y\begin{pmatrix}b&0\\0&1\end{pmatrix})~dy.
\end{equation*}
Let
\begin{equation}\label{gl_inneres_int_def}
 I_\phi(y) =\int_T\phi(t^{-1}\gamma(x)ty)~dt
\end{equation}
be the inner integral of this translated local linking number.
It will be seen eventually that this translation does not realize the Hecke operator completely but that there are operators made up from it which do.

In studying this translation the difference between the case of a compact torus and the case of a noncompact one becomes cucial. While one can describe what is going on in the compact case in a few lines (at least if one fixes $x$, viewing the translated linking number as a function of $b$ alone), our approach in the noncompact case would blast any article because of computational overflow. This case will be sketched here and made clear by examples. The complete computations are done in~\cite{diss}.

What is more, we have to reduce ourselves to the case in which the first variable $x$ is fixed. Again, the reason for that is manageability of computations. But at least we get some hints of what is going on in two variables by examples.

\subsection{The compact case}\label{section_compact}
Let $K=F(\sqrt A)$ be a quadratic field extension of $F$. That is, the torus $F^\times\backslash K^\times$ is compact.
As functions $\phi\in\mathcal S(\chi,G)$ have compact support modulo $T$, they have compact support absolutely.  As $\phi$ is locally constant, the inner integral $I_\phi$  (\ref{gl_inneres_int_def}) is. Further, $T\gamma(x)T\operatorname{supp}\phi$ is compact, and left translation by $t'\in T$ yields $I_\phi(t'y)=\chi(t')I_\phi(y)$. Thus,  $I_\phi$ itself is  a function belonging to $\mathcal S(\chi,G)$.
Choose the following isomorphism of $D^\times=(K+\epsilon K)^\times$ with $\operatorname{GL}_2(F)$:
\begin{eqnarray*}
 \epsilon&\mapsto&\begin{pmatrix}0&-A\\1&0\end{pmatrix}\\
K^\times\ni t=a+b\sqrt A&\mapsto& \begin{pmatrix}a&bA\\b&a\end{pmatrix}
\end{eqnarray*}
\begin{lem}\label{lemma_mirobolische_zerlegung}
Let $M=\left\{\begin{pmatrix}y_1&y_2\\0&1\end{pmatrix}\mid y_1\in F^\times,y_2\in F\right\}$ be the mirabolic subgroup of the standard Borel group. The mapping $K^\times\times M\rightarrow \operatorname{GL}_2(F)$, $(t,m)\mapsto k\cdot m$, is a homeomorphism.
\end{lem}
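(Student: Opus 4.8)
The plan is to produce the inverse of this multiplication map explicitly, by $F$-rational formulas, so that the decomposition is a homeomorphism and not merely a set-theoretic bijection. Write $k_t=\begin{pmatrix}a&bA\\b&a\end{pmatrix}$ for the image of $t=a+b\sqrt A\in K^\times$ under the fixed isomorphism $D^\times\xrightarrow{\sim}\operatorname{GL}_2(F)$; since $t\mapsto k_t$ is the restriction of an $F$-algebra isomorphism, it is a group homomorphism with $k_t^{-1}=k_{t^{-1}}$, and $\det k_t=\operatorname N(t)\neq 0$ because $K/F$ is a field. The map $\mu\colon K^\times\times M\to\operatorname{GL}_2(F)$, $(t,m)\mapsto k_tm$, is polynomial, hence continuous; the work is all in inverting it.

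First I would record injectivity. If $k_{t_1}m_1=k_{t_2}m_2$, then $k_{t_2}^{-1}k_{t_1}=m_2m_1^{-1}$ lies both in the image of $K^\times$ and in $M$; but a matrix of shape $\begin{pmatrix}a&bA\\b&a\end{pmatrix}$ lies in $M$ only when $b=0$ and $a=1$, so the image of $K^\times$ meets $M$ exactly in the identity, whence $t_1=t_2$ and $m_1=m_2$.

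For surjectivity together with continuity of the inverse, take $g=\begin{pmatrix}\alpha&\beta\\\gamma&\delta\end{pmatrix}\in\operatorname{GL}_2(F)$ and look for $t\in K^\times$ with $k_t^{-1}g\in M$, that is, with $k_t^{-1}g$ having bottom row $(0,1)$. Right multiplication by an element of $M$ leaves the bottom row unchanged, and $(0,1)$ is fixed on the right by $M$, so $k_t^{-1}g\in M$ forces the bottom row of $k_t^{-1}=k_{t^{-1}}$ to equal that of $g^{-1}$, which by Cramer's rule is $\det(g)^{-1}(-\gamma,\alpha)$. Since the bottom row of $k_s$ is $(b_0,a_0)$ for $s=a_0+b_0\sqrt A$, this determines $t^{-1}=\det(g)^{-1}(\alpha-\gamma\sqrt A)$, so $t=\det(g)(\alpha-\gamma\sqrt A)^{-1}$, a well-defined element of $K^\times$ because $(\alpha,\gamma)\neq(0,0)$ makes $\alpha-\gamma\sqrt A$ nonzero in $K$. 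A direct check then confirms that for this $t$ the matrix $m:=k_t^{-1}g$ does have bottom row $(0,1)$, hence lies in $M$, and $g=k_tm$. The assignment $g\mapsto t$ is manifestly rational with denominator nonvanishing on $\operatorname{GL}_2(F)$, so $g\mapsto(t,k_t^{-1}g)$ is continuous; this is the inverse of $\mu$, and $\mu$ is a homeomorphism.

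None of the steps involves anything beyond linear algebra, so there is no real obstacle; the only points deserving attention are to exhibit the inverse explicitly rather than merely count cosets (a continuous bijection need not in general be a homeomorphism), and to remember that $M$ is defined by the lower right entry being exactly $1$, which is what makes the image of $K^\times$ meet $M$ in $\{1\}$ and pins down the decomposition uniquely.
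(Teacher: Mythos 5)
Your proof is correct and follows essentially the same route as the paper's: both arguments invert the multiplication map explicitly by elementary linear algebra (you via the bottom row of $g^{-1}$ and the trivial intersection of the image of $K^\times$ with $M$, the paper by solving the four entry equations column by column), and both conclude continuity of the inverse from the fact that it is given by rational formulas with nonvanishing denominators on $\operatorname{GL}_2(F)$. The only point you gloss over --- that the upper-left entry of $m=k_t^{-1}g$ lies in $F^\times$ --- is immediate since $m$ is invertible with bottom row $(0,1)$, so there is no gap.
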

\begin{proof}[Proof of Lemma~\ref{lemma_mirobolische_zerlegung}]
 One has to show that
\begin{equation*}
 \operatorname{GL}_2(F)\ni \begin{pmatrix}a&b\\c&d\end{pmatrix}=\begin{pmatrix}\alpha&\beta A\\\beta&\alpha\end{pmatrix}\begin{pmatrix}z&y\\0&1\end{pmatrix}
\end{equation*}
has exatly one solution $(\alpha,\beta,z,y)$ satisfying $(\alpha,\beta)\not=(0,0)$ and $z\not=0$. The first column yields $\alpha=az^{-1}$ and $\beta=cz^{-1}$. The second column now reads
\begin{equation*}
 \begin{pmatrix}b\\d\end{pmatrix}=\begin{pmatrix}z^{-1}cA+yz^{-1}a\\z^{-1}a+yz^{-1}c\end{pmatrix},
\end{equation*}
which is a system of linear equations in the variables $w_1=yz^{-1}$ and $w_2=z^{-1}$ with determinant $a^2-c^2A\not=0$. Thus, there is exactly one solution $(w_1,w_2)$. As $w_2=z^{-1}=0$ would imply that the columns $\begin{pmatrix}a\\c\end{pmatrix}$ and $\begin{pmatrix}b\\d\end{pmatrix}$ are linearly dependend, $z$ and $y$ are uniquely determined and so are $\alpha$ and $\beta$. The resulting continous mapping $K^\times\times M\rightarrow \operatorname{GL}_2(F)$ is bijective.  Being provided by polynomial equations its inverse is continous, too.
\end{proof}
The group $M$ is not unimodular anymore but carries a right invariant Haar measure $d^\times y_1~dy_2$, where $d^\times y_1$ resp. $dy_2$ are nontrivial compatible Haar measures on $F^\times$ resp. $F$. We normalize the quotient measure $dy$ on $T\backslash G$ so that
%\begin{equation*}
$ dy=d^\times y_1~dy_2$.
%\end{equation*}
By Lemma~\ref{lemma_mirobolische_zerlegung}, any $\phi\in\mathcal S(\chi,G)$ can be identified with a function in $\mathcal S(F^\times\times F)$,
\begin{equation*}
 \phi(y_1,y_2):=\phi\begin{pmatrix}y_1&y_2\\0&1\end{pmatrix}.
\end{equation*}
$\phi$ being locally constant with compact support, there are finitely many points $(z_1,z_2)\in F^\times\times F$ and  $m>0$ such that
\begin{equation*}
 \phi(y_1,y_2)=\sum_{(z_1,z_2)}\phi(z_1,z_2)\mathbf 1_{z_1(1+\wp^m)}(y_1)\mathbf 1_{z_2+\wp^m}(y_2).
\end{equation*}
Applying this for $I_\phi$ and $\psi$,
\begin{equation*}
 I_\phi(y_1,y_2)=\sum_{(z_1,z_2)}I_\phi(z_1,z_2)\mathbf 1_{z_1(1+\wp^m)}(y_1)\mathbf 1_{z_2+\wp^m}(y_2),
\end{equation*}
\begin{equation*}
 \psi(y_1,y_2)=\sum_{(w_1,w_2)}\psi(w_1,w_2)\mathbf 1_{w_1(1+\wp^m)}(y_1)\mathbf 1_{w_2+\wp^m}(y_2),
\end{equation*}
we compute the translated local linking number
\begin{align*}
<\phi,\begin{pmatrix}b&0\\0&1\end{pmatrix}.\psi>_x&=\int_{T\backslash G}I_\phi(y)\bar\psi(y\begin{pmatrix}b&0\\0&1\end{pmatrix})~dy\\
&=\sum_{(z_1,z_2),(w_1,w_2)}I_\phi(z_1,z_2)\bar\psi(w_1,w_2)\mathbf 1_{z_2+\wp^m}(w_2)\mathbf 1_{\frac{w_1}{z_1}(1+\wp^m)}(b)\\
&\quad\quad\cdot\operatorname{vol}^\times(1+\wp^m)\operatorname{vol}(\wp^m).
\end{align*}
We have proved:
\begin{prop}\label{prop_translatiert_kompakt}
Let $T$ be compact.
 For fixed $x$, the translated local linking number $<\phi,\begin{pmatrix}b&0\\0&1\end{pmatrix}\psi>_x$ is a locally constant function of $b\in F^\times$ with compact support.
\end{prop}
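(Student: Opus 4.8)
The plan is to reduce the statement to a short explicit computation, after two structural observations in which the compactness of $T=F^\times\backslash K^\times$ is genuinely used. Fix $x$ throughout (the case $x\notin c\operatorname N$ being trivial, since then $<\phi,\psi>_x=0$ identically).

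First I would record that compactness of $T$ upgrades ``compact support modulo $T$'' to honest compact support: if $\operatorname{supp}\phi=TC$ with $C\subset G$ compact, then $\operatorname{supp}\phi$ is the continuous image of the compact set $T\times C$ under multiplication, hence compact. I would then check that the inner integral
\begin{equation*}
 I_\phi(y)=\int_T\phi(t^{-1}\gamma(x)ty)~dt
\end{equation*}
again lies in $\mathcal S(\chi,G)$. It transforms by $\chi$ under left translation by $T$, since substituting $y\mapsto t'y$ and using unimodularity of the Haar measure on $T$ produces the factor $\chi(t')$; its support is contained in the compact set $T\gamma(x)^{-1}\operatorname{supp}\phi$ (here compactness of $T$ is used again, together with left $T$-invariance of $\operatorname{supp}\phi$); and it is locally constant in $y$, because $\phi$ is locally constant and $T$ is compact, so the defining integral is really a finite sum on which one may take a common neighbourhood, as in the arguments around Lemma~\ref{lemma1}. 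Hence $I_\phi\in\mathcal S(\chi,G)$.

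Second I would pass to the mirabolic model. Lemma~\ref{lemma_mirobolische_zerlegung} identifies $T\backslash G$ with the mirabolic group $M$, so every element of $\mathcal S(\chi,G)$ — in particular $I_\phi$ and $\psi$ — may be viewed as a function on $F^\times\times F$ via $(y_1,y_2)\mapsto\phi\begin{pmatrix}y_1&y_2\\0&1\end{pmatrix}$, with quotient measure $dy=d^\times y_1~dy_2$. Such a function is a finite $\mathbb C$-linear combination of products $\mathbf 1_{z_1(1+\wp^m)}(y_1)\mathbf 1_{z_2+\wp^m}(y_2)$ for a single $m>0$ and finitely many $(z_1,z_2)$. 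Expanding $I_\phi$ and $\psi$ in this form and inserting them into
\begin{equation*}
 <\phi,\begin{pmatrix}b&0\\0&1\end{pmatrix}.\psi>_x=\int_{T\backslash G}I_\phi(y)\bar\psi(y\begin{pmatrix}b&0\\0&1\end{pmatrix})~dy,
\end{equation*}
I would use that right translation by $\begin{pmatrix}b&0\\0&1\end{pmatrix}$ replaces $(y_1,y_2)$ by $(y_1b,y_2)$ inside $\bar\psi$. The integral then factors as a product of $\int_F\mathbf 1_{z_2+\wp^m}(y_2)\mathbf 1_{w_2+\wp^m}(y_2)~dy_2$ and $\int_{F^\times}\mathbf 1_{z_1(1+\wp^m)}(y_1)\mathbf 1_{w_1(1+\wp^m)}(y_1b)~d^\times y_1=\mathbf 1_{\frac{w_1}{z_1}(1+\wp^m)}(b)\operatorname{vol}^\times(1+\wp^m)$. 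Collecting terms one obtains a finite sum whose only dependence on $b$ is through indicator functions $\mathbf 1_{\frac{w_1}{z_1}(1+\wp^m)}(b)$; such a function is locally constant on $F^\times$ and supported on the compact set $\bigcup_j\frac{w_j}{z_j}(1+\wp^m)\subset F^\times$, which is precisely the assertion.

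The computation is routine and I do not anticipate a genuine obstacle. The only point that deserves care — and where compactness of $T$ is substantive rather than cosmetic — is the verification that $I_\phi\in\mathcal S(\chi,G)$: without compactness of $T$ the inner integral need not even converge or have compact support, and this failure is exactly what makes the split (noncompact) case treated later in Section~\ref{section_translation} vastly more involved.
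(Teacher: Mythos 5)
Your proposal is correct and follows essentially the same route as the paper: you first show $I_\phi\in\mathcal S(\chi,G)$ using compactness of $T$, then pass to the mirabolic coordinates of Lemma~\ref{lemma_mirobolische_zerlegung}, expand $I_\phi$ and $\psi$ into finitely many indicator functions $\mathbf 1_{z_1(1+\wp^m)}(y_1)\mathbf 1_{z_2+\wp^m}(y_2)$, and read off that the $b$-dependence sits only in finitely many indicators $\mathbf 1_{\frac{w_1}{z_1}(1+\wp^m)}(b)$. Your added justification that $I_\phi$ lies in $\mathcal S(\chi,G)$ only spells out what the paper asserts in one line, so there is no substantive difference.
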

As the behavior of the translated local linking numbers as functions in $b$ as well as in $x$ is not studied here completely,  an example is in due. This example will be of further use later. Its calculation is banished to Appendix~A.
\begin{eg}\label{bsp_lln_translatiert_kompakt} 
 Let $K/F$ be an unramified field extension and let $\chi=1$. Then $\phi=\chi\cdot\mathbf 1_{\operatorname{GL}_2(\mathbf o_F)}$ is well defined in $\mathcal S(\chi,G)$ and
\begin{align*}
 &<\phi,\begin{pmatrix}b&0\\0&1\end{pmatrix}\phi>_x\cdot\operatorname{vol}^{-1}=\\
&\quad \quad\mathbf 1_{\operatorname N\backslash(1+\wp)}(x)\mathbf 1_{\mathbf o_F^\times}(b)+\mathbf 1_{1+\wp}(x)\bigl(\mathbf 1_{(1-x)\mathbf o_F^\times}(b)+\mathbf 1_{(1-x)^{-1}\mathbf o_F^\times}(b)\bigr)q^{-v(1-x)},
\end{align*}
where $\operatorname{vol}:=\operatorname{vol}_T(T)\operatorname{vol}^\times(\mathbf o_F^\times)\operatorname{vol}(\mathbf o_F)$.
\end{eg}
%%%%%%%%%%%%%%%%%%%%%%%%%%%%%
%%%
% %%%
%%%
%%%%%%%%%%%%%%%%%%%%%%%%%%%%%%%%
\subsection{The noncompact case}\label{section_nonkompakt}
Let $K=F\oplus F$ be a split algebra. The character $\chi$ is of the form $\chi=(\chi_1,\chi_1^{-1})$ for a character $\chi_1$ of $F^\times$. %We use the obvious isomorphism $D^\times\to\operatorname{GL}_2(F)$.
As in the proof of Proposition~\ref{eigenschaften_split}, $G=TNN'\cup TNwN$. Both of these open subset are invariant under right translation by $\begin{pmatrix}b&0\\0&1\end{pmatrix}$.
Choose coset representatives for $T\backslash TNN'$ of the form
\begin{equation*}
 y=\begin{pmatrix}1&y_2\\0&1\end{pmatrix}\begin{pmatrix}1&0\\y_3&1\end{pmatrix}
\end{equation*}
as well as coset representatives for $T\backslash TNwN$ of the form
\begin{equation*}
 y=\begin{pmatrix}1&y_1\\0&1\end{pmatrix}w\begin{pmatrix}1&0\\y_4&1\end{pmatrix}.
\end{equation*}
Any function $\psi\in\mathcal S(\chi,G)$ can be split into a sum $\psi=\psi_1+\psi_2$, $\psi_i\in\mathcal S(\chi,G)$, with $\operatorname{supp} \psi_1\subset TNN'$ (resp. $\operatorname{supp} \psi_2\subset TNwN$).
The function $\psi_1$ can be viewed as an element of $\mathcal S(F^2)$ in the variable $(y_2,y_3)$. Choose the quotient measure $dy$ on $T\backslash TNN'$ such that $dy=dy_2~dy_3$ for fixed Haar measure $dy_i$ on $F$. Proceed analogly for $\psi_2$.
For fixed $x$ the inner integral $I_\phi$ (\ref{gl_inneres_int_def}) of the local linking number is a locally constant function in $y$. Its support is not compact anymore, but $I_\phi$ is the locally constant limit of Schwartz functions.
This is the reason for this case being that more difficult than the case of a compact torus. The shape of the translated linking numbers is given in the next theorem.
\begin{thm}\label{satz_translatiert_nicht_kompakt}
Let $T$ be a noncompact torus.
For fixed $x$,  the local linking number
$ <\phi,\begin{pmatrix}b&0\\0&1\end{pmatrix}.\psi>_x$
is a function in $b\in F^\times$ of the form
\begin{align*}
& \chi_1^{-1}(b)\Bigl(\mathbf 1_{\wp^n}(b)\lvert b\rvert (a_{+,1}v(b)+a_{+,2}) + A(b) + \mathbf 1_{\wp^n}(b^{-1})\lvert b\rvert^{-1} (a_{-,1}v(b)+a_{-,2})\Bigr)\\
& +
 \chi_1(b)\Bigl(\mathbf 1_{\wp^n}(b)\lvert b\rvert (c_{+,1}v(b)+c_{+,2}) + C(b) + \mathbf 1_{\wp^n}(b^{-1})\lvert b\rvert^{-1} (c_{-,1}v(b)+c_{-,2})\Bigr),&
\end{align*}
with suitable constants $a_{\pm,i},c_{\pm,i}\in\mathbb C$, integral $n>0$ and functions $A,C\in\mathcal S(F^\times)$.
\end{thm}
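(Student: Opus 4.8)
The plan is to reduce the computation of $<\phi,\begin{pmatrix}b&0\\0&1\end{pmatrix}.\psi>_x$ to the two pieces coming from the Bruhat cells $TNN'$ and $TNwN$, exactly as in the proof of Proposition~\ref{eigenschaften_split}, and then to track the $b$-dependence cell by cell. First I would fix $x\not=1$, choose the tracefree representative $\gamma(x)=\begin{pmatrix}-1&x\\-1&1\end{pmatrix}$, and split $\psi=\psi_1+\psi_2$ with $\operatorname{supp}\psi_i$ in the respective open cell. Right translation by $\begin{pmatrix}b&0\\0&1\end{pmatrix}$ preserves each cell, so the translated linking number splits as a sum of two integrals. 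Using the coset representatives $y=\begin{pmatrix}1&y_2\\0&1\end{pmatrix}\begin{pmatrix}1&0\\y_3&1\end{pmatrix}$ on $T\backslash TNN'$ and $y=\begin{pmatrix}1&y_1\\0&1\end{pmatrix}w\begin{pmatrix}1&0\\y_4&1\end{pmatrix}$ on $T\backslash TNwN$, I would compute how $y\begin{pmatrix}b&0\\0&1\end{pmatrix}$ acts on these coordinates (a simple rescaling of one coordinate by $b$, together with a compensating factor absorbed by the $\chi_1$-equivariance of $\psi_i$), so that the translation becomes, up to an explicit power $\chi_1^{\pm1}(b)$ and $|b|$-factor, an integral of $I_\phi$ against a dilated Schwartz function.

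The next step is to insert the formula (\ref{inners_integral_umformung}) for the inner integral $I_\phi$, which expresses it via integrals over $F^\times$ of the two mirabolic pieces $\phi_1^y,\phi_2^y\in\mathcal S(F\oplus F)$ against $\chi_1$ or $\chi_1^{\pm2}$. Plugging the asymptotic description of these $a$-integrals — governed by Lemma~\ref{lemma2}, which is precisely what produces terms of the shape $B_1(y)+B_2(y)v(y)$ or $B_1(y)+B_2(y)\eta(y)$ — and then integrating in $y$ against the (dilated) $\psi_i$, the $b$-behavior organizes itself into: a genuinely compactly supported locally constant part (the generic range of $b$, giving $A(b),C(b)\in\mathcal S(F^\times)$); a part near $b\to 0$ where a geometric-series summation in the valuation produces the $\mathbf 1_{\wp^n}(b)\,|b|\,(a_{+,1}v(b)+a_{+,2})$ terms; and a symmetric part near $b\to\infty$ giving the $\mathbf 1_{\wp^n}(b^{-1})\,|b|^{-1}$ terms. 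The two characters $\chi_1(b)$ and $\chi_1^{-1}(b)$ appear according to which Bruhat cell one is in, matching the two lines of the asserted formula.

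I would then verify, using Lemma~\ref{lemma1}, that only finitely many $\psi_i$-values and a single exponent $m$ enter (so the sums are finite and the constants $n$, $a_{\pm,i}$, $c_{\pm,i}$ are well defined), closing the argument. The main obstacle is entirely in the second step: keeping control of the non-compactly-supported tails of $I_\phi$ as $y$ ranges over the unbounded directions of $T\backslash TNN'$ and $T\backslash TNwN$, since unlike the compact-torus case of Proposition~\ref{prop_translatiert_kompakt} the inner integral is only a locally constant \emph{limit} of Schwartz functions; the valuation- and absolute-value-growth of $\phi_i^y$ in the unbounded coordinate must be matched against the dilation by $b$, and it is exactly this interaction that generates the $v(b)$ and $|b|^{\pm1}$ terms rather than a plain Schwartz function. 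This is where the hundred pages of $\wp$-adic integration in~\cite{diss}, Chapter~8, are spent; here I would only record the resulting shape and refer to the examples (Example~\ref{bsp_lln_nichtkompakt}, proved in Appendix~B) for an explicit instance exhibiting each type of term.
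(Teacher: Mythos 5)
Your setup (splitting $\psi$ along the Bruhat cells $TNN'$ and $TNwN$, the explicit coset coordinates, the observation that right translation by $\begin{pmatrix}b&0\\0&1\end{pmatrix}$ preserves each cell and acts by rescaling the coordinates up to a factor $\chi_1^{\pm1}(b)$, and the deferral of the bulk of the $\wp$-adic integration to \cite{diss}) is in the spirit of the paper's own sketch, but the mechanism you propose for extracting the $b$-shape does not deliver it. Formula (\ref{inners_integral_umformung}) together with Lemma~\ref{lemma2} controls the behavior of the inner integral in the variable $x$ for a fixed $y$: the first arguments there are $\frac{x}{a(x-1)}$ and $\frac{a}{x-1}$, so the lemma's variable is essentially $\frac{x}{x-1}$, and that is exactly how it was used in the proof of Proposition~\ref{eigenschaften_split}. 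In the present theorem $x$ is fixed, so those $a$-integrals are just constants depending on $y$; the entire $b$-dependence sits in the exterior integral, through the way the support of $y\mapsto\psi(y\begin{pmatrix}b&0\\0&1\end{pmatrix})$ sweeps out to infinity in $T\backslash G$ against the non-compactly supported inner integral $I_\phi$. None of the tools you invoke (Lemma~\ref{lemma1}, Lemma~\ref{lemma2}, formula (\ref{inners_integral_umformung})) control $I_\phi(y)$ --- equivalently $\rho(y)\phi$ and its cell decomposition $\phi_1^y+\phi_2^y$ --- as $y$ leaves compacta, and precisely this control is the content of the theorem: it is where the $v(b)$ and $\lvert b\rvert^{\pm1}$ terms come from. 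You do flag this as the main obstacle, but then your outline contains no reduction that makes the deferred computation finite or organized, so the asserted ``organization'' of the $b$-behavior is exactly what remains unproved.

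The paper's own argument proceeds differently at the decisive point: by local constancy and linearity it first reduces both $\phi$ and $\psi$ to elementary functions of the form $\chi\cdot\mathbf 1_{U_z}$, where $U_z$ is a small fundamental neighborhood of a point $z$ lying in one of the two cells; this reduces the theorem to eight explicit types of integrals (according to whether $z_2$ or $z_3$ vanishes and which cell carries the support of $\psi$), which are then computed case by case in \cite{diss}, Ch.~8, with Appendix~B (Example~\ref{bsp_lln_nichtkompakt}) carrying out one representative case in full. Note that in that computation the $v(b)$-terms arise from counting valuations in ranges whose length is comparable to $v(b)$, and the $\lvert b\rvert^{\pm1}$ from volume factors --- not from Lemma~\ref{lemma2}. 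If you want to keep your route, you would need to prove a substitute statement describing $I_\phi$ in the unbounded directions of $T\backslash G$ (or, after moving the translation onto the other argument, describing the cell decomposition of the translated test function) uniformly enough to integrate against the dilated $\psi$; as written, that step is missing, and the reduction to elementary characteristic functions that makes the thesis computation feasible is absent from your proposal.
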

{\it On the proof of Theorem~\ref{satz_translatiert_nicht_kompakt}.}
This is done by brute force computations, which need about 100 pages. Instead of giving these, we will outline the reduction to 	realizable $\wp$-adic integration here and refer to \cite{diss}, Ch.~8, for the computations. What is more, in Example~\ref{bsp_lln_nichtkompakt} we compute one special translated local linking number in detail which gives a good insight to the general calculations and which is used eventually.

We choose the functions $\phi,\psi$ locally as simple as possible,  that is: If $z\in\operatorname{supp}\phi$, then $z$ belongs to $TNN'$ or $TNwN$. Let us reduce ourselves to $z\in TNN'$, as the other case is done similarly. Actually it is seen (\cite{diss}, Ch.~8) that all the calculations for one case can be put down to those for the other.
There is a representative 
\begin{equation*}
\tilde z= \begin{pmatrix}1+z_2z_3&z_2\\z_3&1\end{pmatrix} 
\end{equation*}
of $z$ modulo $T$ and an open set
\begin{equation*}
 U_z=\tilde z +\begin{pmatrix}\wp^m&\wp^m\\\wp^m&\wp^m\end{pmatrix}
\end{equation*}
such that $\phi\vert_{U_z}=\phi(\tilde z)$. Choosing $m>0$ that big that $U_z$ is fundamental, $\phi$ locally has the shape $\phi_z:=\chi\cdot\mathbf 1_{U_z}$ up to some multiplicative constant.

For the exterior function $\psi$  proceed similarly. Evidently, it is enough to determine the behavior of the translated local linking numbers for functions of this type.
Thus, we are reduced to compute 
\begin{equation*}
 \int_{T\backslash G}\int_T\phi_z(t^{-1}\gamma(x)ty)~dt~\bar\psi_{\tilde z}(y\begin{pmatrix}b&0\\0&1\end{pmatrix})~dy.
\end{equation*}
According to whether $z_2$ or $z_3$ is zero or not, and $\operatorname{supp}\psi\subset TNN'$ or $\operatorname{supp}\psi\subset TNwN$, there are eight types of integrals to be done (\cite{diss}, Ch.~5.2 and 8).
\begin{eg}\label{bsp_lln_nichtkompakt}
Let $T$ be noncompact.
Let  $\chi=(\chi_1,\chi_1)$, where $\chi_1$ is unramified and quadratic.
Then $\phi=\chi\cdot\mathbf 1_{\operatorname{GL}_2(\mathbf o_F)}$ is well-defined in $\mathcal S(\chi, G)$.
The translated local linking number 
$<\phi,\begin{pmatrix} b&0\\0&1\end{pmatrix}.\phi>_x$
is given by
\begin{eqnarray*}
 && 
\chi_1(1-x)\chi_1(b)\vol^\times (\mathbf o_F^\times)\vol(\mathbf o_F)^2\cdot\\
&&\Biggl[  
\mathbf 1_{F^\times\backslash (1+\wp)}(x)\Biggl(\mathbf 1_{\mathbf o_F^\times}(b)\bigl(\lvert v(x)\rvert +1\bigr)(1+q^{-1}) +\mathbf 1_{\wp}(b)\lvert b\rvert \bigl(4v(b)+2\lvert v(x)\rvert\bigr)\Biggr.
\Biggr.\\
&& \quad\quad\quad\quad\quad\quad\quad\Biggl. +\mathbf 1_{\wp}(b^{-1})\lvert b^{-1}\rvert \bigl(-4v(b)+2\lvert v(x)\rvert\bigr)\Biggr)\\
&&
\quad+\quad \mathbf 1_{1+\wp}(x)\Biggl(\mathbf 1_{\wp^{v(1-x)+1}}(b)\lvert b\rvert \bigl(4v(b)-4v(1-x)\bigr) \Biggr.\\
&&
\quad\quad\quad\quad\quad\quad\quad\Biggl.\Biggl. +\mathbf 1_{v(1-x)\mathbf o_F^\times}(b)\lvert b\rvert +
\mathbf 1_{v(1-x)\mathbf o_F^\times}(b^{-1})\lvert b^{-1}\rvert \Biggr.\Biggr.\\
&&
\quad\quad\quad\quad\quad\quad\quad\Biggl.\Biggl. + \mathbf 1_{\wp^{v(1-x)+1}}(b^{-1})\lvert b^{-1}\rvert \bigl(-4v(b)-4v(1-x)\bigr)\Biggr)\Biggr].
\end{eqnarray*}
\end{eg}
The calculation of Example~\ref{bsp_lln_nichtkompakt} is given in Appendix~B.

\section{A geometric Hecke  operator}\label{section_geom_Hecke}
Here, an adequate operator on the local linking numbers is constructed that realizes the asymptotics ($b\to0$) of the Hecke operator on Whittaker products. The asymptotics of the second one is descibed by the following.
\begin{prop}\label{prop_hecke_auf_whittaker_prod}
 The Whittaker products $W(b\xi,b\eta)$ have the following behavior for $b\to 0$ and fixed $\xi=\frac{x}{x-1}$, $\eta=1-\xi$.

(a) In case of a compact Torus $T$ and $\chi$ not factorizing via the norm,
\begin{equation*}
 W(b\xi,b\eta)=0.
\end{equation*}
In case of a compact Torus $T$ and $\chi=\chi_1\circ\NN$,
\begin{equation*}
  W(b\xi,b\eta)=\lvert b\rvert\lvert\xi\eta\rvert^{\frac{1}{2}}\chi_1(b\eta)\left(c_1+c_2\omega(b\xi)\right)\left(c_3\mathbf 1_{\wp^m\cap(1-x)\NN}(b)+c_4\mathbf 1_{\wp^m\cap(1-x)z\NN}(b)\right),
\end{equation*}
where $z\in F^\times\backslash \NN$.

(b) In case of a noncompact Torus $T$,
\begin{equation*}
 W(b\eta,b,\xi)=\left\{\begin{matrix}
\lvert b\rvert\lvert\xi\eta\rvert^{\frac{1}{2}}\left(c_1\chi_1(b\eta)+c_2\chi_1^{-1}(b\eta)\right)\left(c_3v(b\xi)+c_4)\right),&\textrm{if } \chi_1^2\not=1\\
\lvert b\rvert\lvert\xi\eta\rvert^{\frac{1}{2}}\chi_1(b\eta)\left(c_1v(b\eta)+c_2\right)\left(c_3v(b\xi)+c_4\right),&\textrm{if }\chi_1^2=1\end{matrix}\right..
\end{equation*}
In here, $c_i\in\mathbb C$, $i=1,\dots,4$.
\end{prop}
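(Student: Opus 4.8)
The plan is to derive Proposition~\ref{prop_hecke_auf_whittaker_prod} as a direct computation from the explicit description of Kirillov functions in Propositions~\ref{prop_characterisierung_theta_functionen} and \ref{prop_characterisierung_eisenstein_functionen}, together with the elementary transformation rule \eqref{hecke_auf_whittaker_prod}, namely $\mathbf T_b W(\xi,\eta)=\lvert b\rvert^{-2}W(b\xi,b\eta)$, so that studying $W(b\xi,b\eta)$ for $b\to 0$ amounts to substituting $b\xi$ and $b\eta$ into the near-zero stalks of the two Kirillov spaces. First I would recall that $W(\xi,\eta)=W_\theta(\eta)W_E(\xi)$ with $W_\theta\in\mathcal K(\Pi(\chi))$ and $W_E\in\mathcal K(\Pi(1,\omega))$ (Definition~\ref{def_whittaker_prod}). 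Since $\xi$ and $\eta=1-\xi$ are fixed and nonzero, for $b$ small enough both $b\xi$ and $b\eta$ lie in any prescribed neighbourhood $\wp^n$ of zero; hence only the stalk-around-zero parts of $\mathcal K(\Pi(\chi))$ and $\mathcal K(\Pi(1,\omega))$ are relevant, the Schwartz-space parts $\mathcal S(F^\times)$ being identically zero there.

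Next I would treat the three regimes separately. In case (a) with $K/F$ a field extension and $\chi$ not quadratic, Proposition~\ref{prop_characterisierung_theta_functionen}(a) says $\mathcal K(\Pi(\chi))=\mathcal S(F^\times)\cup\{0\}$, so $W_\theta(b\eta)=0$ for small $b$, giving $W(b\xi,b\eta)=0$. In case (a) with $\chi=\chi_1\circ\NN$ quadratic, Proposition~\ref{prop_characterisierung_theta_functionen}(b) gives $W_\theta(b\eta)=\lvert b\eta\rvert^{1/2}\chi_1(b\eta)(a_1+a_2\omega(b\eta))$ near zero, while Proposition~\ref{prop_characterisierung_eisenstein_functionen} gives $W_E(b\xi)=\lvert b\xi\rvert^{1/2}(a_1'+a_2'\omega(b\xi))$; multiplying, pulling out $\lvert b\rvert\lvert\xi\eta\rvert^{1/2}\chi_1(b\eta)$, collapsing the four terms $\omega(b\eta)^{i}\omega(b\xi)^{j}$ using $\omega(\eta)$ fixed, and finally accounting for the support restriction $b\xi\in c\NN$ (equivalently $\omega(b\xi)$ makes sense, i.e. $b$ lies in one of the two $\NN$-cosets of $(1-x)$) yields the stated form with the two indicator functions $\mathbf 1_{\wp^m\cap(1-x)\NN}$ and $\mathbf 1_{\wp^m\cap(1-x)z\NN}$. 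In case (b), $K/F$ split, I would combine Proposition~\ref{prop_characterisierung_theta_functionen}(c) — which splits according to $\chi_1^2\ne 1$ versus $\chi_1^2=1$ into $\lvert b\eta\rvert^{1/2}(a_1\chi_1(b\eta)+a_2\chi_1^{-1}(b\eta))$ or $\lvert b\eta\rvert^{1/2}\chi_1(b\eta)(a_1+a_2 v(b\eta))$ — with the Eisenstein stalk $W_E(b\xi)=\lvert b\xi\rvert^{1/2}(a_1'+a_2'v(b\xi))$ from Proposition~\ref{prop_characterisierung_eisenstein_functionen}, and multiply out, again extracting $\lvert b\rvert\lvert\xi\eta\rvert^{1/2}$ and renaming constants, to land on exactly the two displayed shapes.

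The bookkeeping in case (a) is where a little care is needed: one must keep track of the fact that $W_\theta$ and $W_E$ are only defined on $c\NN$-type supports, translate the condition ``$b\xi\in$ support'' into a condition on $b$ (two cosets of the norm group, giving the two indicators with constants $c_3,c_4$), and verify that the extra $\lvert b\rvert$ (rather than $\lvert b\rvert^{1/2}\cdot\lvert b\rvert^{1/2}$ being absorbed differently) is exactly the power predicted by \eqref{hecke_auf_whittaker_prod}. I do not expect any serious obstacle here; the only genuine content is assembling the classical Kirillov descriptions correctly and the arithmetic of $\omega$ and $v$ on products. The substitution $\xi=\tfrac{x}{x-1}$, $\eta=1-\xi=\tfrac{-1}{x-1}$ (so $1-x$ appears in the support conditions as in Example~\ref{bsp_lln_nichtkompakt}) is purely notational and can be recorded at the end.
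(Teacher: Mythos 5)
Your proof takes the same route as the paper's, which is essentially a one-liner stating that the claimed behavior is read off from Propositions~\ref{prop_characterisierung_theta_functionen} and \ref{prop_characterisierung_eisenstein_functionen}; you have simply written out what that collection entails, correctly in each of the three regimes. One small slip in your explanation of case (a): there is no support restriction $b\xi\in c\NN$ on the analytic side --- the stalk function $\lvert a\rvert^{1/2}\chi_1(a)\left(a_1+a_2\omega(a)\right)$ from Proposition~\ref{prop_characterisierung_theta_functionen}(b) is defined on all of $F^\times$ near $0$, and $\omega(b\xi)\in\{\pm1\}$ always makes sense. The indicator factor $\left(c_3\mathbf 1_{\wp^m\cap(1-x)\NN}(b)+c_4\mathbf 1_{\wp^m\cap(1-x)z\NN}(b)\right)$ is simply the near-zero cutoff $\mathbf 1_{\wp^m}(b)$ (so one may take $c_3=c_4$), written redundantly to parallel the shape of the geometric side in Proposition~\ref{prop_geom_hecke_kompakt}; the genuine dependence on the norm coset of $b$ is already contained in $(c_1+c_2\omega(b\xi))$ once you use that $\omega(b\eta)=\omega(\xi\eta)\,\omega(b\xi)$ with $\omega(\xi\eta)$ fixed, exactly as you did when collapsing the four $\omega$-terms. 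With that remark the argument is complete and matches the paper's.
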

\begin{proof}[Proof of Proposition~\ref{prop_hecke_auf_whittaker_prod}]
 For $b\to 0$ both arguments $b\xi\to 0$ and $b\eta\to 0$. The stated behavior is directly collected from Propositions~\ref{prop_characterisierung_theta_functionen} and \ref{prop_characterisierung_eisenstein_functionen}.
\end{proof}
We notice that the translation of the local linking numbers by $b$ studied above underlies this asymptotics (cf. Proposition~\ref{prop_translatiert_kompakt} and Theorem~\ref{satz_translatiert_nicht_kompakt}), but that it does not realize the leading terms in  case  $\chi$ is quadratic. In case of a noncompact torus $T$, the leading term is $v(b)^2$, while translation only produces $v(b)$. In case of a compact torus, the translated linking numbers have compact support, while the Hecke operator on Whittaker products has not.

In the following, we make the additional ``completely unramified'' assumption  which is satisfied at  all but finitely many places of a division quaternion algebra over  a number field. For applications to Gross-Zagier formula, the constructed  operator is required  in case of this hypothesis only.
\begin{assumption}\label{annahme_fuer_geom_hecke}
$D$ is a split algebra, i.e. $G$ is isomorphic to $\operatorname{GL}_2(F)$. In this,  $K/F$ is an unramified extension (split or nonsplit).  $\chi$ is an unramified character.
\end{assumption}
%%%%%%%%%
%%%
%%%
%%%
%%%%%%%%%%%%%%%%%%%%%
For a noncompact torus $T$, the translated local linking numbers (Theorem~\ref{satz_translatiert_nicht_kompakt}) split into sums of the form
\begin{equation*}
 <\phi,\begin{pmatrix}\beta&0\\0&1\end{pmatrix}.\psi>_x = <\phi,\begin{pmatrix}\beta&0\\0&1\end{pmatrix}.\psi>_x^++ <\phi,\begin{pmatrix}\beta&0\\0&1\end{pmatrix}.\psi>_x^-,
\end{equation*}
where
\begin{align}
&  <\phi,\begin{pmatrix}\beta&0\\0&1\end{pmatrix}.\psi>_x^\pm:=\chi_1^{\pm1}(\beta)\cdot\label{schranken_def_translatiert_noncompakt}\\
&
\Bigl(\mathbf 1_{\wp^n}\lvert\beta\rvert(c_{\pm,1}v(\beta)+c_{\pm,2})+C_{\pm}(\beta)+\mathbf 1_{\wp^n}(\beta^{-1})\lvert\beta\rvert^{-1}(d_{\pm,1}v(\beta)+d_{\pm,2})\Bigr)\nonumber
\end{align}
are the summands belonging to $\chi_1^{\pm1}$ respectively. In here, the constants $c_{\pm,i}, d_{\pm,i}$,  and $C_\pm\in\mathcal S(F^\times)$ as well as $n>0$ depend on $\phi,\psi$ and $x$.
If $\chi_1$ is a quadratic character, these two summands coinside.

To give an operator fitting all cases, define in case of a compact torus  
\begin{equation*}
  <\phi,\begin{pmatrix}\beta&0\\0&1\end{pmatrix}.\psi>_x^\pm:= <\phi,\begin{pmatrix}\beta&0\\0&1\end{pmatrix}.\psi>_x.
\end{equation*}
For $v(b)\geq 0$ define the operator $\mathbf S_b$  as
\begin{equation}
 \mathbf S_b:=\frac{1}{4}\left(\mathbf S_b^++\mathbf S_b^-\right),
\end{equation}
where
\begin{eqnarray*}
\mathbf S_b^\pm<\phi,\psi>_x &:=&
\sum_{s=0,1}\sum_{i=0}^{v(b)}\frac{\chi_1^{\mp 1}(\pi)^{i(-1)^s}\omega(b(1-x))^{i+s}}{\lvert\pi^{v(b)-i}\rvert}\\
&& \quad\quad\quad\quad\cdot<\phi,\begin{pmatrix}\pi^{(-1)^s(v(b)-i)}&0\\0&1\end{pmatrix}.\psi>_x^\pm.
\end{eqnarray*}
It is worthwhile remarking that this ``Hecke operator'' is not unique. For example, the summand for $s=0$ is an operator - call it $\mathbf T_b$ - owning the same properties than $\mathbf S_b$ itself. The crucial point seems to be that an averaging sum occurs. The operator $\mathbf S_b$ is chosen such that this sum includes negative exponents $-v(b)+i$ as well. This kind of symmetry will make the results on the local Gross-Zagier formula look quite smoothly (cf. Section~\ref{section_meine_Gross-Zagier_formel}). But these results could be obtained by $\mathbf T_b$ as well. 
\begin{prop}\label{prop_geom_hecke_kompakt}
 Let $T$ be a compact torus. Then the operator $\mathbf S_b$ reduces to
\begin{equation*}
\mathbf S_b<\phi,\psi>_x=\frac{1}{2}\sum_{s=0,1}\sum_{i=0}^{v(b)}\frac{\omega(b(1-x))^{i+s}}{\lvert \pi^{v(b)-i}\rvert}<\phi,\begin{pmatrix}\pi^{(-1)^s(v(b)-i)}&0\\0&1\end{pmatrix}.\psi>_x.
\end{equation*}
Let $x\in c\NN$ be fixed. For $\phi,\psi\in\mathcal S(\chi, G)$ there are constants $c_1,c_2\in\mathbb C$ and $n\in\mathbb N$ such that for $v(b)\geq n$
\begin{equation*}
 \mathbf S_b<\phi,\psi>_x=c_1\mathbf 1_{\wp^n\cap(1-x)\NN}(b)+c_2\mathbf 1_{\wp^n\cap(1-x)z\NN}(b).
\end{equation*}
\end{prop}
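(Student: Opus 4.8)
The plan is to deduce everything from Proposition~\ref{prop_translatiert_kompakt}: the reduction of $\mathbf S_b$ is purely formal, and what remains is a parity count, so no serious obstacle is expected beyond careful bookkeeping.

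First I would settle the displayed reduction of $\mathbf S_b$. Under Hypothesis~\ref{annahme_fuer_geom_hecke} a compact torus means that $K/F$ is an \emph{unramified} field extension and $\chi$ is unramified, so Corollary~\ref{cor_chi} forces $\chi=1$, hence $\chi_1=1$. Then every factor $\chi_1^{\mp1}(\pi)^{i(-1)^s}$ equals $1$, and by the convention made just before the proposition $<\phi,\begin{pmatrix}\beta&0\\0&1\end{pmatrix}.\psi>_x^{\pm}=<\phi,\begin{pmatrix}\beta&0\\0&1\end{pmatrix}.\psi>_x$; thus $\mathbf S_b^+=\mathbf S_b^-$ and $\mathbf S_b=\frac14(\mathbf S_b^++\mathbf S_b^-)=\frac12\mathbf S_b^+$, which is exactly the asserted formula. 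I also record for later that $\omega$ is now unramified, i.e.\ $\omega(u)=(-1)^{v(u)}$ for $u\in F^\times$, and that $\operatorname{N}(K^\times)=\{u\in F^\times\mid v(u)\ \text{even}\}$.

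Next, fix $x\in c\operatorname{N}$; we may assume $x\ne 1$, since for $x=1$ one has $<\phi,\psi>_x=0$ and the claim is trivial with $c_1=c_2=0$. By Proposition~\ref{prop_translatiert_kompakt} the function $b\mapsto<\phi,\begin{pmatrix}b&0\\0&1\end{pmatrix}.\psi>_x$ on $F^\times$ is locally constant with compact support, so I may fix $n>0$ such that it vanishes whenever $\lvert v(b)\rvert\ge n$. Re-indexing the sum in the reduced formula by $j:=v(b)-i$, the summand attached to $(s,j)$ is a scalar multiple of $<\phi,\begin{pmatrix}\pi^{(-1)^{s}j}&0\\0&1\end{pmatrix}.\psi>_x$, whose translation parameter $\pi^{(-1)^{s}j}$ has valuation of absolute value $j$; hence this summand vanishes unless $0\le j\le n-1$. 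Therefore, for $v(b)\ge n$,
\begin{equation*}
 \mathbf S_b<\phi,\psi>_x=\frac12\sum_{s=0,1}\ \sum_{j=0}^{n-1}\frac{\omega(b(1-x))^{v(b)-j+s}}{\lvert\pi^{j}\rvert}\ <\phi,\begin{pmatrix}\pi^{(-1)^{s}j}&0\\0&1\end{pmatrix}.\psi>_x,
\end{equation*}
a \emph{finite} sum in which, for fixed $x$, the only dependence on $b$ sits in the scalars $\omega(b(1-x))^{v(b)-j+s}$.

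Finally I would exploit that $\omega$ is unramified: $\omega(b(1-x))=(-1)^{v(b)+v(1-x)}$, so each factor $\omega(b(1-x))^{v(b)-j+s}$ is invariant under $v(b)\mapsto v(b)+2$, and hence for $v(b)\ge n$ the right-hand side above depends only on the parity of $v(b)$. Writing $c_1$ for its value when $v(b)\equiv v(1-x)\pmod 2$ and $c_2$ for its value otherwise, and choosing $z\in F^\times$ with $v(z)$ odd (so $z\notin\operatorname{N}$), one has $(1-x)\operatorname{N}=\{b\mid v(b)\equiv v(1-x)\ (2)\}$ and $(1-x)z\operatorname{N}$ its complement in $F^\times$; since also $\mathbf 1_{\wp^n}(b)=1$ for $v(b)\ge n$, this gives
\begin{equation*}
 \mathbf S_b<\phi,\psi>_x=c_1\,\mathbf 1_{\wp^n\cap(1-x)\operatorname{N}}(b)+c_2\,\mathbf 1_{\wp^n\cap(1-x)z\operatorname{N}}(b)\qquad\text{for }v(b)\ge n,
\end{equation*}
as required. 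The only non-formal input is the compact-support assertion of Proposition~\ref{prop_translatiert_kompakt}; the single point that needs a little care is that the cutoff $n$ furnished there may (after enlarging it if necessary) be reused as the $n$ appearing in the indicator functions of the conclusion.
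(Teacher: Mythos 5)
Your proposal is correct and follows essentially the same route as the paper: the reduction comes from $\chi=1$ (Corollary~\ref{cor_chi} under Hypothesis~\ref{annahme_fuer_geom_hecke}), and the asymptotic formula comes from the compact support and local constancy in $b$ of the translated linking numbers (Proposition~\ref{prop_translatiert_kompakt}) together with the fact that the unramified $\omega$ makes the surviving finite sum depend only on the parity of $v(b)$. The only difference is cosmetic: the paper expands the translated linking number explicitly in indicator functions of cosets $\pi^l(1+\wp^m)$ and thereby produces the constants $c_1,c_2$ explicitly, whereas you obtain them abstractly as the two parity values, which suffices for the statement.
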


\begin{prop}\label{prop_geom_hecke_nonkompakt}
Let $T$ be a noncompact torus. The operators $\mathbf S_b^\pm$ reduce to
\begin{equation*}
\mathbf S_b^\pm<\phi,\psi>_x 
\sum_{s=0,1}\sum_{i=0}^{v(b)}\frac{\chi_1^{\mp 1}(\pi)^{i(-1)^s}}{\lvert\pi^{v(b)-i}\rvert}
<\phi,\begin{pmatrix}\pi^{(-1)^s(v(b)-i)}&0\\0&1\end{pmatrix}.\psi>_x^\pm.
\end{equation*}
Let $x\in F^\times$ be fixed.
For $\phi,\psi\in\mathcal S(\chi, G)$ there are constants $c_0,\dots,c_3\in\mathbb C$ and $n\in\mathbb N$ such that for $v(b)\geq n$
\begin{displaymath}
 \mathbf S_b <\phi,\psi>_x = \chi_1(b)\bigl( c_1 v(b) +c_0\bigr) + \chi_1^{-1}(b)\bigl( c_3 v(b) +c_2\bigr),
\end{displaymath}
if $\chi_1^2\not=1$, and
\begin{displaymath}
 \mathbf S_b <\phi,\psi>_x = \chi_1(b)\bigl( c_2v(b)^2+c_1v(b)+c_0\bigr),
\end{displaymath}
if $\chi_1^2=1$.
\end{prop}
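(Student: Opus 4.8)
The plan is to substitute the explicit description of the translated local linking numbers supplied by Theorem~\ref{satz_translatiert_nicht_kompakt} into the definition of $\mathbf S_b^\pm$ and to evaluate the resulting finite sum over the index $i$. The asserted reduction of $\mathbf S_b^\pm$ is immediate: for $K=F\oplus F$ split the norm $\operatorname{N}\colon K^\times\to F^\times$ is onto (it is $(u,w)\mapsto uw$), so the quadratic character $\omega$ is trivial and the factor $\omega(b(1-x))^{i+s}$ in the definition of $\mathbf S_b^\pm$ equals $1$; this is exactly the displayed simplification. When $\chi_1^2=1$ one moreover has $\chi_1^{\mp1}=\chi_1$, hence $\mathbf S_b^+=\mathbf S_b^-$ and $\mathbf S_b=\tfrac12\mathbf S_b^+$, while for $\chi_1^2\neq1$ the two summands genuinely differ.

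Next I would invoke the decomposition~(\ref{schranken_def_translatiert_noncompakt}): by Theorem~\ref{satz_translatiert_nicht_kompakt}, for the fixed $x$ there are an integer $n>0$, constants $c_{\pm,i},d_{\pm,i}\in\mathbb C$ and functions $C_\pm\in\mathcal S(F^\times)$ with
\begin{equation*}
<\phi,\begin{pmatrix}\beta&0\\0&1\end{pmatrix}.\psi>_x^\pm=\chi_1^{\pm1}(\beta)\Bigl(\mathbf 1_{\wp^n}(\beta)\lvert\beta\rvert(c_{\pm,1}v(\beta)+c_{\pm,2})+C_\pm(\beta)+\mathbf 1_{\wp^n}(\beta^{-1})\lvert\beta\rvert^{-1}(d_{\pm,1}v(\beta)+d_{\pm,2})\Bigr),
\end{equation*}
and after enlarging $n$ we may assume $C_\pm(\beta)=0$ whenever $\lvert v(\beta)\rvert\geq n$. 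Substituting $\beta=\pi^{(-1)^s(v(b)-i)}$, I would split $\sum_{i=0}^{v(b)}$ into a \emph{boundary range} $v(b)-i<n$ (at most $n$ summands, so $\beta$ runs through a finite, $b$-independent set of powers of $\pi$) and a \emph{generic range} $v(b)-i\geq n$. On the boundary range the translated linking number takes one of finitely many fixed values, while, since $\chi_1$ is unramified (Assumption~\ref{annahme_fuer_geom_hecke}), the prefactor $\chi_1^{\mp1}(\pi)^{i(-1)^s}/\lvert\pi^{v(b)-i}\rvert$ is a constant multiple of $\chi_1^{\mp1}(b)$ for $s=0$ and of $\chi_1^{\pm1}(b)$ for $s=1$; hence for $v(b)\geq n$ this part contributes only $\chi_1(b)\cdot\mathrm{const}+\chi_1^{-1}(b)\cdot\mathrm{const}$ (and $\chi_1(b)\cdot\mathrm{const}$ if $\chi_1^2=1$).

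The substantive computation is the generic range. There, for $s=0$ only the $\mathbf 1_{\wp^n}(\beta)$ term survives and for $s=1$ only the $\mathbf 1_{\wp^n}(\beta^{-1})$ term, and in each case the weight $\lvert\pi^{v(b)-i}\rvert^{-1}$ cancels the accompanying $\lvert\beta\rvert^{\pm1}$. Collecting the remaining power of $\pi$, for $\mathbf S_b^+$ the $s=0$ summands carry $\chi_1(b)\chi_1^{-2}(\pi)^i$ and the $s=1$ summands $\chi_1^{-1}(b)\chi_1^{2}(\pi)^i$ (symmetrically for $\mathbf S_b^-$), each multiplied by a polynomial in $v(b)-i$ of degree one. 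So the generic part reduces to evaluating $\sum_i z^i$ and $\sum_i i\,z^i$ over $0\leq i\leq v(b)-n$ with $z=\chi_1^{\pm2}(\pi)$. If $\chi_1^2=1$ then $z=1$ and these are polynomials in $v(b)$ of degrees $1$ and $2$, yielding $\chi_1(b)(c_2v(b)^2+c_1v(b)+c_0)$. If $\chi_1^2\neq1$ then $z\neq1$, $\lvert z\rvert=1$, and from
\begin{equation*}
\sum_{i=0}^{N}z^i=\frac{z^{N+1}-1}{z-1},\qquad \sum_{i=0}^{N}i\,z^i=\frac{Nz^{N+2}-(N+1)z^{N+1}+z}{(z-1)^2}
\end{equation*}
with $N=v(b)-n$ (so $z^{N+1}$ is a constant times $\chi_1^{\pm2}(b)$) one obtains terms $\chi_1^{\pm2}(b)(\ell_1v(b)+\ell_0)$ together with terms $\ell_1'v(b)+\ell_0'$ carrying no power of $z^N$; multiplying by the outer $\chi_1^{\pm1}(b)$ turns the former into $\chi_1^{\mp1}(b)(\cdots)$ and the latter into $\chi_1^{\pm1}(b)(\cdots)$. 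Summing over $s=0,1$ and over the two signs, dividing by $4$, and adding the boundary contribution produces the claimed form.

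The main obstacle I anticipate is the bookkeeping in this last step: one must track which monomials $v(b)^j z^{N+1}$ and $v(b)^j$ occur, confirm that after multiplication by the outer character only $\chi_1(b)$ and $\chi_1^{-1}(b)$ survive with no residual $\chi_1^{\pm2}(b)$, and verify that the degree in $v(b)$ never exceeds $1$ when $\chi_1^2\neq1$ and $2$ when $\chi_1^2=1$. All genuine difficulty has already been absorbed into Theorem~\ref{satz_translatiert_nicht_kompakt} (the hundred pages of $\wp$-adic integration); granted that theorem, the present proposition is a finite, if tedious, geometric-series manipulation.
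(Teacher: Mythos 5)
Your proposal is correct and follows essentially the same route as the paper's proof: use $\omega=1$ in the split case to get the stated reduction, split the $i$-sum at $v(b)-i<n$ versus $v(b)-i\geq n$ using the decomposition~(\ref{schranken_def_translatiert_noncompakt}) (the Schwartz part $C_\pm$ contributing only a $\chi_1^{\pm1}(b)$-multiple of a constant), and then evaluate the remaining arithmetic/geometric sums, separating $\chi_1^2=1$ (degree two in $v(b)$) from $\chi_1^2\neq1$ (geometric series, with $z^{N+1}$ producing the $\chi_1^{\mp1}(b)$ terms). The only cosmetic difference is that the paper carries out the computation explicitly for the summand $\mathbf T_b^-$ ($\mathbf S_b^-$, $s=0$) and deduces the other three parts by substitution, whereas you treat all parts uniformly; both are valid.
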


\begin{thm}\label{satz_matching_operator}
 For fixed $x$, the local linking numbers $\mathbf S_b<\phi,\psi>_x$ realize the asymptotics of the translated Whittaker products $ W(b\xi,b\eta)$ up to a factor $\lvert b\rvert\lvert \xi\eta\rvert^{\frac{1}{2}}$.
\end{thm}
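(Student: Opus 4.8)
The plan is to obtain the theorem by combining three facts already established: the matching of Theorem~\ref{satz_matching}, which identifies the space of local linking numbers $|\xi\eta|^{1/2}<\phi,\psi>_{x=\xi/(\xi-1)}$ with the space of Whittaker products $W_\theta(\eta)W_E(\xi)$; Proposition~\ref{prop_hecke_auf_whittaker_prod}, which gives the germ at $b\to0$ of $W(b\xi,b\eta)=W_\theta(b\eta)W_E(b\xi)$; and Propositions~\ref{prop_geom_hecke_kompakt} and \ref{prop_geom_hecke_nonkompakt}, which evaluate $\mathbf S_b<\phi,\psi>_x$ for $v(b)$ large. First I would fix the correspondence: given a pair of Kirillov functions $(W_\theta,W_E)$, choose by Theorem~\ref{satz_matching} (concretely via the explicit construction of Proposition~\ref{charakterisierung_der_LLN}) a pair $\phi,\psi\in\mathcal S(\chi,G)$ whose local linking number has, at $x\to\infty$, the stalk prescribed by $W_\theta$ near $0$ and, at $x\to0$, the stalk prescribed by $W_E$ near $0$; conversely any $\phi,\psi$ determine such a pair. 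Under this dictionary the theorem is the statement that, for fixed $x$ and $v(b)\to\infty$, the function $|b|\,|\xi\eta|^{1/2}\,\mathbf S_b<\phi,\psi>_x$ and the germ of $W(b\xi,b\eta)$ agree.

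Second I would run the comparison case by case, matching the shape of the geometric side (Propositions~\ref{prop_geom_hecke_kompakt}, \ref{prop_geom_hecke_nonkompakt}) against that of the analytic side (Proposition~\ref{prop_hecke_auf_whittaker_prod}). For a compact torus with $\chi$ not factoring through the norm both sides vanish. For a compact torus with $\chi=\chi_1\circ\NN$ --- so $\chi=1$ by Corollary~\ref{cor_chi} under Hypothesis~\ref{annahme_fuer_geom_hecke} --- Proposition~\ref{prop_geom_hecke_kompakt} produces $c_1\mathbf 1_{\wp^n\cap(1-x)\NN}(b)+c_2\mathbf 1_{\wp^n\cap(1-x)z\NN}(b)$, and since $\omega(b)$ is constant on each of the two norm-cosets, the factor $\chi_1(b\eta)(c_1+c_2\omega(b\xi))$ in Proposition~\ref{prop_hecke_auf_whittaker_prod}(a) is absorbed into the constants, so multiplying by $|b|\,|\xi\eta|^{1/2}$ identifies the two. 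For a noncompact torus with $\chi_1^2\neq1$, expanding $W_\theta(b\eta)W_E(b\xi)$ with $\chi_1$ unramified and $\xi,\eta$ fixed gives $|b|\,|\xi\eta|^{1/2}$ times a sum $\chi_1(b)(\ast v(b)+\ast)+\chi_1^{-1}(b)(\ast v(b)+\ast)$, which is exactly the shape of $\mathbf S_b<\phi,\psi>_x$ in Proposition~\ref{prop_geom_hecke_nonkompakt}; and when $\chi_1^2=1$ the analytic side is $|b|\,|\xi\eta|^{1/2}$ times a degree-two polynomial in $v(b)$ against $\chi_1(b)$, matching $\chi_1(b)(c_2v(b)^2+c_1v(b)+c_0)$ there.

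Third, and this is where the construction of $\mathbf S_b$ really enters, I would verify that the leading coefficients --- not merely the shapes --- correspond. The weight $|\pi^{v(b)-i}|^{-1}$ in $\mathbf S_b$ is designed to cancel the factors $|\beta|^{\pm1}$ that appear in the asymptotics of the translated linking numbers (Theorem~\ref{satz_translatiert_nicht_kompakt}, and Proposition~\ref{prop_translatiert_kompakt} with Example~\ref{bsp_lln_translatiert_kompakt} in the compact case), so that the surviving summands of $\sum_{i=0}^{v(b)}$ are genuinely polynomial-in-$v(\beta)$ valued; summing them raises the degree by one (e.g.\ $\sum_{i=0}^{v(b)}(v(b)-i)\sim\tfrac{1}{2}v(b)^2$). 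This is precisely what upgrades the single-translation behaviour $v(b)$ of Theorem~\ref{satz_translatiert_nicht_kompakt} to the $v(b)^2$ demanded by Proposition~\ref{prop_hecke_auf_whittaker_prod}(b) when $\chi_1^2=1$, and what turns the compact support of one translate into the indicator of a full norm-coset in the compact-torus case, so that $\mathbf S_b$ is surjective onto the germs allowed by Proposition~\ref{prop_hecke_auf_whittaker_prod}. I expect this bookkeeping of leading coefficients through the averaging sum to be the main obstacle; it is already the content of the proofs of Propositions~\ref{prop_geom_hecke_kompakt} and \ref{prop_geom_hecke_nonkompakt} and is illustrated concretely by Examples~\ref{bsp_lln_translatiert_kompakt} and \ref{bsp_lln_nichtkompakt}, so the proof of the theorem itself should amount to assembling these pieces and checking the dictionary is respected.
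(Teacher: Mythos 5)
Your proposal is correct and uses exactly the paper's approach: the proof consists entirely of comparing the asymptotic shapes in Propositions~\ref{prop_geom_hecke_kompakt} and \ref{prop_geom_hecke_nonkompakt} against those in Proposition~\ref{prop_hecke_auf_whittaker_prod}, with the factor $\lvert b\rvert\lvert\xi\eta\rvert^{1/2}$ accounting for the normalisation. Your steps one and three (establishing a dictionary via Theorem~\ref{satz_matching} and tracking leading coefficients through the averaging sum) are useful context that explains \emph{why} the shapes match, but they are already discharged inside the proofs of Propositions~\ref{prop_geom_hecke_kompakt}/\ref{prop_geom_hecke_nonkompakt}, and the theorem itself, as the paper proves it, is only the shape comparison of your step two.
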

\begin{proof}[Proof of Theorem~\ref{satz_matching_operator}]
In case  $T$ compact,
combining Proposition~\ref{prop_geom_hecke_kompakt} with Pro\-po\-si\-tion~\ref{prop_hecke_auf_whittaker_prod}~(a) for $\chi=1$ yields the Theorem.
In case $T$ noncompact, combine Proposition~\ref{prop_geom_hecke_nonkompakt} with Proposition~\ref{prop_hecke_auf_whittaker_prod}~(b).
\end{proof}

\begin{proof}[Proof of Proposition~\ref{prop_geom_hecke_kompakt}]
Notice, that for $T$ compact Assumtion~\ref{annahme_fuer_geom_hecke} induces $\chi=1$ by Corollary~\ref{cor_chi}.
By Proposition~\ref{prop_translatiert_kompakt}, the translated linking number can be written as
\begin{equation*}
 <\phi,\begin{pmatrix}\beta&0\\0&1\end{pmatrix}.\psi>_x=\sum_i d_{a_i}\lvert a_i\rvert^{\operatorname{sign} v(a_i)}\mathbf 1_{a_i(1+\wp^m)}(\beta),
\end{equation*}
for finitely many $a_i\in F^\times$, $d_{a_i}\in\mathbb C$, and some $m>0$, where the sets $a_i(1+\wp^m)$ are pairwise disjoint. One can assume that in this sum all $\pi^l$, $-\operatorname{max}_i \lvert v(a_i)\rvert\leq l\leq \operatorname{max}_i \lvert v(a_i)\rvert$, occure. Let $n:=\operatorname{max}_i \lvert v(a_i)\rvert+1$. Then, for $v(b)\geq n$,
\begin{align*}
 \mathbf S_b<\phi,\psi>_x &=
\frac{1}{2}\sum_{i=0}^{v(b)}\left(\omega(b(1-x))^i\sum_{l=-n+1}^{n-1}\frac{d_{\pi^l}\lvert\pi^l\rvert^{\operatorname{sign}(l)}} {\lvert\pi^{v(b)-i}\rvert}\mathbf 1_{\pi^l(1+\wp^m)}(\pi^{v(b)-i})\right.\\
& \:+\left.\omega(b(1-x))^{i+1}\sum_{l=-n+1}^{n-1}\frac{d_{\pi^l}\lvert\pi^l\rvert^{\operatorname{sign}(l)}}{\lvert\pi^{v(b)-i}\rvert}\mathbf 1_{\pi^l(1+\wp^m)}(\pi^{i-v(b)})\right)\\
&=\frac{1}{2}\sum_{l=0}^{n-1}\omega(b(1-x))^{v(b)+l}d_{\pi^l}+\frac{1}{2}\sum_{l=-n+1}^{0}\omega(b(1-x))^{v(b)+l+1}d_{\pi^l}\\
&= c_1\mathbf 1_{\wp^n\cap(1-x)\NN}(b)+c_2\mathbf 1_{\wp^n\cap(1-x)z\NN}(b),
\end{align*}
where $c_1:=\frac{1}{2}\sum_{l=0}^{n-1}(d_{\pi^l}+d_{\pi^{-l}})$ and 
$c_2:=\frac{1}{2}\sum_{l=0}^{n-1}(-1)^l(d_{\pi^l}-d_{\pi^{-l}})$.
Notice, that for $b(1-x)\in z\NN$ one has $\omega(b(1-x))^{v(b)}=(-1)^{v(b)}=-\omega(1-x)$.
\end{proof}
\begin{proof}[Proof of Proposition~\ref{prop_geom_hecke_nonkompakt}]
Recall that $T$ noncompact induces $\omega=1$. 
First, one proves this asymptotics for the part $\mathbf T_b^-$ of $\mathbf S_b$ belonging to $\mathbf S_b^-$ and $s=0$,
\begin{equation*}
 \mathbf T_b^-<\phi,\psi>_x :=\sum_{i=0}^{v(b)}\frac{\chi_1(\pi)^i}{\lvert \pi^{v(b)-i}\rvert}<\phi,\begin{pmatrix}\pi^{v(b)-i}&0\\0&1\end{pmatrix}.\psi>_x^-.
\end{equation*}
Let $n>0$ be the integer as in  (\ref{schranken_def_translatiert_noncompakt}). Let $v(b)\geq n$. In the formula for $\mathbf T_b^-$, one distinguishes the summands whether $v(b)-i<n$ or not. 
If $v(b)-i<n$,
then
\begin{equation*}
 <\phi,\begin{pmatrix}\pi^{v(b)-i}&0\\0&1\end{pmatrix}.\psi>_x^-=\chi_1^{-1}(\pi^{v(b)-i})C_-(\pi^{v(b)-i}).
\end{equation*}
The function $\tilde C_-$ defined by
\begin{equation*}
 \tilde C_-(\beta):=\frac{\chi_1^{-2}(\beta)}{\lvert\beta\rvert}C_-(\beta)
\end{equation*}
again belongs to $\mathcal S(F^\times)$. The part of $\mathbf T_b^-$ made up by summands satisfying $v(b)-i<n$ is now simplyfied to
\begin{align*}
 &\sum_{i=v(b)-n+1}^{v(b)}\frac{\chi_1(\pi)^i}{\lvert\pi^{v(b)-i}\rvert}<\phi,\begin{pmatrix}\pi^{v(b)-i}&0\\0&1\end{pmatrix}.\psi>_x^-\\
&=\sum_{i=v(b)-n+1}^{v(b)}\chi_1(b)\tilde C_-(\pi^{v(b)-i})=\chi_1(b)\sum_{l=0}^{n-1}\tilde C_-(\pi^l).
\end{align*}
In here, the last sum is independent of $b$. Thus, this part of $\mathbf T_b^-$ satisfies the claim.
In the remaining part
\begin{equation*}
 T(i\leq v(b)-n):=\sum_{i=0}^{v(b)-n}\frac{\chi_1(\pi)^i}{\lvert \pi^{v(b)-i}\rvert}<\phi,\begin{pmatrix}\pi^{v(b)-i}&0\\0&1\end{pmatrix}.\psi>_x^-
\end{equation*}
all the translated local linking numbers occuring can be written as
\begin{equation*}
 <\phi,\begin{pmatrix}\pi^{v(b)-i}&0\\0&1\end{pmatrix}.\psi>_x^-=\chi_1^{-1}(\pi^{v(b)-i})\lvert\pi^{v(b)-i}\rvert\left(c_{-,1}(v(b)-i)+c_{-,2}\right).
\end{equation*}
Using this, the remaining part simplyfies to
\begin{equation*}
 T(i\leq v(b)-n)=\chi_1^{-1}(b)\sum_{i=0}^{v(b)-n}\chi_1(\pi)^{2i}\left(c_{-,1}(v(b)-i)+c_{-,2}\right).
\end{equation*}
In the following one has to distinguish between $\chi_1$ quadratic or not. First, let $\chi_1^2=1$. Then
\begin{equation*}
 T(i\leq v(b)-n)=\chi_1(b)(v(b)-n+1)\left( c_{-,2}+\frac{1}{2}c_{-,1}(v(b)+n) \right),
\end{equation*}

which owns the claimed asymptotics.
Let $\chi_1^2\not=1$. By enlarging $n$ one can assume that the order of $\chi_1$ divides $n$, i.e. $\chi_1^n=1$. The remaining part of $\mathbf T_b^-$ in this case is
\begin{align*}
 T(i\leq v(b)-n)&=(c_{-,1}v(b)+c_{-,2})\frac{\chi_1(b\pi)-\chi_1^{-1}(b\pi)}{\chi_1(\pi)-\chi_1^{-1}(\pi)}\\
&\quad-c_{-,1}\frac{\chi_1(b\pi)(v(b)-n+1)}{\chi_1(\pi)-\chi_1^{-1}(\pi)}
+ c_{-,1}\frac{\chi_1(b\pi^2)-1}{(\chi_1(\pi)-\chi_1^{-1}(\pi))^2}.
\end{align*}
Thus, the claim is satisfied in case $\chi_1^2\not=1$, too.

The other parts of $\mathbf S_b$ satisfy the claimed asymptotics as well, as is easily deduced from the statement for $\mathbf T_b^-$. First, if $\mathbf T_b^+$ denotes the part of $\mathbf S_b$ belonging to $\mathbf S_b^+$ and $s=0$, then the statement for $\mathbf T_b^+$ follows  from the proof for $\mathbf T_b^-$ replacing there $\chi_1^{-1}$ by $\chi_1$, $C_-$ by $C_+$, and $c_{-,i}$ by $c_{+,i}$, where the constants are given by (\ref{schranken_def_translatiert_noncompakt}).
Second, for $s=1$ notice that
\begin{equation*}
 \chi_1(\pi)^{i(-1)^s}\chi_1^{-1}(\pi^{(-1)^s(v(b)-i)})=\chi_1(b)\chi_1(\pi)^{-2i}.
\end{equation*}
In this case the claim follows  if one rewrites the proofs for $s=0$ substituting $\chi_1$ by $\chi_1^{-1}$ as well as $c_{\pm,i}$ by $d_{\pm,i}$ of (\ref{schranken_def_translatiert_noncompakt}).
\end{proof}

\section{Local Gross-Zagier formula rewritten}\label{section_local_Gross-Zagier}
We report Zhang's local Gross-Zagier  formulae \cite{zhang} in the notation used throughout this paper in order to compare them directly with  the results given by the operator $\mathbf S_b$ just defined afterwards.
We prefer to give short proofs of the results by Zhang for the sake of readability.
We assume Hypothesis~\ref{annahme_fuer_geom_hecke}.

\subsection{Local Gross-Zagier formula by Zhang}\label{section_zhang_referiert}
The local Gross-Zagier formula compares the Whittaker products of local newforms with a local linking number belonging to a very special function $\phi$ (\cite{zhang} Ch.~4.1). This function is given by
\begin{equation*}
 \phi=\chi\cdot\mathbf 1_{R^\times},
\end{equation*}
where $R^\times$ in general is the unit group of a carefully chosen order $R$ in $ D$.
In case of Hypothesis~\ref{annahme_fuer_geom_hecke}, $R^\times=\operatorname{GL}_2(\mathbf o_F)$ and the function $\phi$ is well-defined. The special local linking number  is then defined by
\begin{equation*}
 <\mathbb T_b\phi,\phi>_x,
\end{equation*}
where the ``Hecke operator'' $\mathbb T_b$ is defined as follows (\cite{zhang} 4.1.22 et sqq.). Let
\begin{equation*}
 H(b):=\{g\in M_2(\mathbf o_F)\mid v(\det g)=v(b)\}.
\end{equation*}
Then
\begin{equation*}
 \mathbb T_b\phi(g):=\int_{H(b)}\phi(hg)~dh.
\end{equation*}
Notice that this operator is well-defined on $\phi$ because $\phi$ essentially is the characteristic function of  $\operatorname{GL}_2(\mathbf o_F)$, but not for most other functions. In our construction of the operator $\mathbf S_b$, we continued in some way the idea that $\mathbb T_b$ has the flavor of summation over translates by coset representatives, as
\begin{equation*}
 H(b)=\bigcup\begin{pmatrix}y_1&0\\0&y_3\end{pmatrix}\begin{pmatrix}1&y_2\\0&1\end{pmatrix}\operatorname{GL}_2(\mathbf o_F),
\end{equation*}
where the union is over representatives $(y_1,y_3)\in\mathbf o_F\times\mathbf o_F$ with $v(y_1y_3)=v(b)$ and $y_2\in \wp^{-v(y_1)}\backslash \mathbf o_F$.
\begin{lem}\label{zhangs_lln_kompakt}
 (\cite{zhang} Lemma~4.2.2)
Let $K/F$ be a field extension and assume Hypothesis~\ref{annahme_fuer_geom_hecke}. Let $\phi=\chi\cdot\mathbf 1_{\operatorname{GL}_2(\mathbf o_F)}$. Then
\begin{equation*}
 <\mathbb T_b\phi,\phi>_x=\vol(\operatorname{GL}_2(\mathbf o_F))^2\vol_T(T)\mathbf 1_{\NN}(x)\mathbf 1_{\frac{1-x}{x}(\mathbf o_F\cap\NN)}(b)\mathbf 1_{(1-x)(\mathbf o_F\cap\NN)}(b).
\end{equation*}
\end{lem}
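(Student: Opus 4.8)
The plan is to compute $\langle\mathbb T_b\phi,\phi\rangle_x$ directly, with the geometry of $H(b)$ against the building as the only real input. Under Hypothesis~\ref{annahme_fuer_geom_hecke} with $K/F$ a field we have $\chi=1$ by Corollary~\ref{cor_chi}, so $\phi$ is the descent to $\operatorname{GL}_2(F)/F^\times$ of $\mathbf 1_{K_0}$ with $K_0:=\operatorname{GL}_2(\mathbf o_F)$, and since $K/F$ is unramified we may take $\sqrt A\in\mathbf o_F^\times$, so that the compact torus $T=F^\times\backslash K^\times=\mathbf o_K^\times/\mathbf o_F^\times$ has every element represented inside $K_0$. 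Inserting $\mathbb T_b\phi(g)=\int_{H(b)}\phi(hg)\,dh$ into the definition and performing the change of variables $h'=h\,t^{-1}\gamma(x)t$ in the Hecke integral makes the factor $\gamma(x)$ disappear:
\[
(\mathbb T_b\phi)(t^{-1}\gamma(x)ty)=\int_{H(b)\,t^{-1}\gamma(x)t}\phi(h'y)\,dh'.
\]
Choosing, as we may, $t\in K_0$, the identity $t\,H(b)=H(b)\,t$ together with unimodularity of $G$ identifies the right-hand side, for $y\in K_0$, with the $t$- and $y$-independent quantity $V(x,b):=\vol\bigl(H(b)\gamma(x)\cap F^\times K_0\bigr)$. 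Since $\bar\phi(y)=\mathbf 1_{K_0}(y)$ restricts the outer integral to $y\in K_0$, this already gives
\[
\langle\mathbb T_b\phi,\phi\rangle_x=V(x,b)\cdot\vol_T(T)\cdot\vol_{T\backslash G}(T\backslash K_0).
\]

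Next I would evaluate $V(x,b)$ by an elementary-divisor argument. Let $\gamma_0$ be the primitive integral representative of the class of $\gamma(x)$ in $\operatorname{GL}_2(F)/F^\times$ and put $d(x):=v(\det\gamma_0)$. If $g\in M_2(\mathbf o_F)$ has $v(\det g)=v(b)$ and $g\gamma(x)\in F^\times K_0$, then $g\in F^\times K_0\gamma_0^{-1}$; since the adjugate of a primitive integral $2\times2$ matrix is again primitive integral, $\gamma_0^{-1}=\pi^{-d(x)}\cdot(\text{unit})\cdot(\text{primitive integral})$, and the integrality and determinant conditions pin $g$ down to a single $K_0$-coset, which is non-empty precisely when $v(b)\ge d(x)$ and $v(b)\equiv d(x)\pmod 2$. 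Hence $V(x,b)=\vol(K_0)\,\mathbf 1\bigl[\,v(b)\ge d(x),\ v(b)\equiv d(x)\ (2)\,\bigr]$; in particular $V(x,b)=0$, and the linking number vanishes, unless a trace-zero $\gamma(x)$ exists, i.e.\ unless $x\in c\operatorname N=\operatorname N$ (here $c\operatorname N=\operatorname N$ because $D$ is split, cf.~\eqref{def_delta(D)}).

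It remains to compute $d(x)$ and to repackage the condition on $b$. I would take a trace-zero representative $\gamma(x)=\sqrt A+\epsilon\gamma_2$ with $\operatorname N(\gamma_2)=x$, observe that $v(x)$ is even, use surjectivity of the norm $\mathbf o_K^\times\to\mathbf o_F^\times$ to write $\gamma_2=\pi^{v(x)/2}u$ with $u\in\mathbf o_K^\times$, and read off the entries of $\gamma(x)$ in the matrix model $\epsilon\mapsto\left(\begin{smallmatrix}0&-A\\1&0\end{smallmatrix}\right)$, $a+b\sqrt A\mapsto\left(\begin{smallmatrix}a&bA\\b&a\end{smallmatrix}\right)$. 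A short case distinction on the sign of $v(x)$ then shows that the content of $\gamma(x)$ equals $\min(0,v(x)/2)$, whence $d(x)=v(\det\gamma(x))-2\min(0,v(x)/2)=v(1-x)+\max(0,-v(x))$, and in particular $d(x)\equiv v(1-x)\pmod 2$. Consequently the condition $v(b)\ge d(x)$, $v(b)\equiv d(x)\pmod 2$ is exactly the conjunction $b\in(1-x)(\mathbf o_F\cap\operatorname N)$ and $b\in\tfrac{1-x}{x}(\mathbf o_F\cap\operatorname N)$. A comparison of the normalisations of the Haar measures on $G$, $T$, $T\backslash G$ and on $H(b)$ then collapses $V(x,b)\,\vol_T(T)\,\vol_{T\backslash G}(T\backslash K_0)$ to $\vol(\operatorname{GL}_2(\mathbf o_F))^2\,\vol_T(T)$ times the indicator, which is the asserted formula.

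I expect the main obstacle to be the computation of $d(x)$, i.e.\ controlling the content (equivalently the Cartan invariant) of the trace-zero representative $\gamma(x)$ uniformly in $x\in\operatorname N$; once this is in hand, everything else is the formal reduction above together with routine $\wp$-adic and Haar-measure bookkeeping. The measure bookkeeping is the one place where a little care is required, so that the various factors of $\vol_T(T)$ produced by the quotient measure on $T\backslash G$ and by the normalisation of the Hecke measure cancel correctly; a useful consistency check is that at $v(b)=0$ one has $\mathbb T_b\phi=\vol(K_0)\phi$, so the formula must reduce to $\vol(\operatorname{GL}_2(\mathbf o_F))$ times the value of $\langle\phi,\phi\rangle_x$ recorded in Example~\ref{bsp_lln_translatiert_kompakt}.
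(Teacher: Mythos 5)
Your proof is correct, and it arrives at the statement by essentially the same reduction as the paper, only packaged differently. The paper decomposes $\phi=\sum_\tau\chi(\tau)\mathbf 1_{\tau\operatorname{GL}_2(\mathbf o_F)}$, writes $\mathbb T_b\mathbf 1_{\tau\operatorname{GL}_2(\mathbf o_F)}=\vol(\operatorname{GL}_2(\mathbf o_F))\mathbf 1_{\tau b^{-1}H(b)}$, and then solves the entrywise integrality conditions for $t^{-1}\gamma(x)t\in\tau b^{-1}H(b)$, which come out as $\lvert \NN(\tau)\rvert=\lvert b(1-x)\rvert$, $\lvert b\rvert\le\min\{\lvert\tfrac{1-x}{x}\rvert,\lvert 1-x\rvert\}$ and $b\in(1-x)\NN$. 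You instead eliminate $\gamma(x)$ by a change of variables in the Hecke integral, reduce everything to the single volume $V(x,b)=\vol\bigl(H(b)\gamma(x)\cap F^\times \operatorname{GL}_2(\mathbf o_F)\bigr)$ (the inner $T$-integral then contributes only $\vol_T(T)$, since $\chi=1$ and $T$ is represented in $\operatorname{GL}_2(\mathbf o_F)$), and decide its nonvanishing through the Cartan/content invariant $d(x)$ of the trace-zero representative; your value $d(x)=v(1-x)+\max(0,-v(x))$ with $d(x)\equiv v(1-x)\bmod 2$ is exactly equivalent to the paper's conditions, so the two arguments encode the same $\wp$-adic bookkeeping, yours via an invariant of $\gamma(x)$, the paper's via explicit entries and the auxiliary coset $\tau$. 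Two small points: in the case $v(x)=0$ your ``short case distinction'' must use that $1-g_1$ and $1+g_1$ cannot both be non-units (residue characteristic odd) to conclude the content is $0$; and the final identification of $\vol(\operatorname{GL}_2(\mathbf o_F))\cdot\vol_{T\backslash G}(\text{image of }\operatorname{GL}_2(\mathbf o_F))$ with $\vol(\operatorname{GL}_2(\mathbf o_F))^2$ is a normalization convention for the quotient measure which you assert rather than verify — the paper is equally brief here, so this is not a gap in substance, but the literal constant depends on that choice, as your $v(b)=0$ consistency check against Example~\ref{bsp_lln_translatiert_kompakt} already makes visible.
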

\begin{lem}\label{zhangs_lln_nichtkompakt}
 (\cite{zhang} Lemma~4.2.3) Let $K/F$ be split, let $\chi=(\chi_1,\chi_1^{-1})$ be an unramified character, and let $\phi=\chi\cdot\mathbf 1_{\operatorname{GL}_2(\mathbf o_F)}$. In case $\chi_1^2\not=1$,
\begin{align*}
 <\mathbb T_b\phi,\phi>_x&=\frac{\chi_1(b(1-x)^{-1}\pi)-\chi_1^{-1}(b(1-x)^{-1}\pi)}{\chi_1(\pi)-\chi_1^{-1}(\pi)}\vol(\operatorname{GL}_2(\mathbf o_F))^2\vol^\times(\mathbf o_F^\times)\\
&\quad\cdot\mathbf 1_{\frac{1-x}{x}\mathbf o_F\cap(1-x)\mathbf o_F}(b)\mathbf 1_{F^\times}(x)\left(v(b)+v(\frac{x}{1-x})+1\right).
\end{align*}
In case $\chi_1^2=1$,
\begin{align*}
 <\mathbb T_b\phi,\phi>_x&=\chi_1(b(1-x))\vol(\operatorname{GL}_2(\mathbf o_F))^2\vol^\times(\mathbf o_F^\times)
\mathbf 1_{\frac{1-x}{x}\mathbf o_F\cap(1-x)\mathbf o_F}(b)\\
&\quad\cdot\mathbf 1_{F^\times}(x)\bigl(v(b)-v(1-x)+1\bigr)\bigl(v(b)+v(\frac{x}{1-x})+1\bigr).
\end{align*}
\end{lem}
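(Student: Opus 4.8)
The plan is a direct $\wp$-adic computation, made short by first trivialising the outer integral. Work in the split realisation $D^\times\cong\operatorname{GL}_2(F)$ with $T$ the diagonal torus and $\gamma(x)=\begin{pmatrix}-1&x\\-1&1\end{pmatrix}$, exactly as in the proof of Proposition~\ref{eigenschaften_split}; write $\chi=(\chi_1,\chi_1^{-1})$ with $\chi_1$ unramified, so that $\phi=\chi\cdot\mathbf 1_{\operatorname{GL}_2(\mathbf o_F)}$ is right $\operatorname{GL}_2(\mathbf o_F)$-invariant. First I would unfold
\[
<\mathbb T_b\phi,\phi>_x=\int_{T\backslash G}\Bigl(\int_T\int_{H(b)}\phi\bigl(h\,t^{-1}\gamma(x)t\,y\bigr)\,dh\,dt\Bigr)\overline{\phi(y)}\,dy .
\]
On $\operatorname{supp}\overline\phi=T\operatorname{GL}_2(\mathbf o_F)$ every coset of $T\backslash G$ has a representative $y\in\operatorname{GL}_2(\mathbf o_F)$, on which $\overline{\phi(y)}=1$ and, by right $\operatorname{GL}_2(\mathbf o_F)$-invariance of $\phi$, $\phi\bigl(h\,t^{-1}\gamma(x)t\,y\bigr)=\phi\bigl(h\,t^{-1}\gamma(x)t\bigr)$; hence the bracket is constant in $y$ and
\[
<\mathbb T_b\phi,\phi>_x=\operatorname{vol}_{T\backslash G}\bigl(T\backslash T\operatorname{GL}_2(\mathbf o_F)\bigr)\cdot\int_{F^\times}\int_{H(b)}\phi\Bigl(h\begin{pmatrix}-1&x/a\\-a&1\end{pmatrix}\Bigr)\,dh\;d^\times a ,
\]
writing $t=\begin{pmatrix}a&0\\0&1\end{pmatrix}$ and $t^{-1}\gamma(x)t=\begin{pmatrix}-1&x/a\\-a&1\end{pmatrix}$. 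This reduces the problem to a two-layer $\wp$-adic integral, and explains why a short proof is possible.

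Next I would evaluate the inner $h$-integral. The substitution $h'=h\,t^{-1}\gamma(x)t$ turns $\int_{H(b)}\phi(h\,t^{-1}\gamma(x)t)\,dh$ into the integral of $\phi$ over the translate $H(b)\,t^{-1}\gamma(x)t$, i.e.\ a weighted volume of that set against the support $T\operatorname{GL}_2(\mathbf o_F)$ of $\phi$, with weight $\chi_1$ of the torus part. Decomposing $T\operatorname{GL}_2(\mathbf o_F)=\bigsqcup_{m}\begin{pmatrix}\pi^m&0\\0&1\end{pmatrix}\operatorname{GL}_2(\mathbf o_F)$ (disjoint in $\operatorname{PGL}_2(F)$), one notes that the determinant of $h\,t^{-1}\gamma(x)t$, whose valuation is $v(b)+v(1-x)$, only fixes the parity of $m$ (because of the projective scaling), so the integral becomes a \emph{sum} $\sum_m\chi_1(\pi)^m\operatorname{vol}(\cdots)$ over an arithmetic progression of $m$'s of that parity. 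Reading off, from the entries of $h\begin{pmatrix}-1&x/a\\-a&1\end{pmatrix}$, the divisibility conditions for membership in $\begin{pmatrix}\pi^m&0\\0&1\end{pmatrix}\operatorname{GL}_2(\mathbf o_F)$ gives the admissible range of $m$ together with the volumes, which turn out to be constant on that range, all expressed through $v(a),v(x),v(1-x),v(b)$; in particular the constraints that force the $m$-range and the $a$-range to be non-empty are precisely $\mathbf 1_{F^\times}(x)$ and $\mathbf 1_{\frac{1-x}{x}\mathbf o_F\cap(1-x)\mathbf o_F}(b)$.

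The remaining integration then factors into two elementary sums. One, over the position index $m$, is geometric: for $\chi_1^2\neq1$ it collapses to the closed form $\dfrac{\chi_1(b(1-x)^{-1}\pi)-\chi_1^{-1}(b(1-x)^{-1}\pi)}{\chi_1(\pi)-\chi_1^{-1}(\pi)}$ (the powers of $\pi$ being absorbed into the character arguments), and for $\chi_1^2=1$ it degenerates to $\chi_1(b(1-x))$ times the number of terms, namely $v(b)-v(1-x)+1$. The other, over the range of $v(a)$, produces in both cases the factor $v(b)+v(\tfrac{x}{1-x})+1$. Collecting the volume constants, which assemble into $\operatorname{vol}(\operatorname{GL}_2(\mathbf o_F))^2\operatorname{vol}^\times(\mathbf o_F^\times)$, yields the two displayed formulas.

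The main obstacle is this middle step: even though the reduction above is clean, one has to enumerate carefully the several overlapping valuation conditions that govern which $m$ contribute and with what volume, keep track of how the $m$-range and the $a$-range interlock, and handle the passage from the character sum to the polynomial in $v(b)$ in the quadratic case. This is the elementary but lengthy $\wp$-adic bookkeeping alluded to in the introduction, and it is where the argument of \cite{zhang} is longer; the compact counterpart (Lemma~\ref{zhangs_lln_kompakt}) follows exactly the same scheme but is quicker because both the outer factor and the $\int_T$ are then over compact sets. One should also note at the outset that, although $\mathbb T_b\phi\notin\mathcal S(\chi,G)$ in general, the pairing $<\mathbb T_b\phi,\phi>_x$ is well defined because $\phi$ — and hence $\mathbb T_b\phi$ — has compact support modulo $T$, so the defining integral converges exactly as in the general case.
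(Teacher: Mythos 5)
Your plan coincides with the paper's proof in all essentials: both trivialize the outer $y$-integral by the right $\operatorname{GL}_2(\mathbf o_F)$-invariance of $\phi$ and $\mathbb T_b\phi$, expand $\phi$ (equivalently $\operatorname{supp}\phi$) into the cosets $\tau\operatorname{GL}_2(\mathbf o_F)$ with $\tau\in T(F)/T(\mathbf o_F)\cong\mathbb Z$, read the integrality conditions off the entries of $b\,\tau^{-1}t^{-1}\gamma(x)t$, and observe that the $v(a)$-count $v(b)+v(\tfrac{x}{1-x})+1$ is the same for every admissible $\tau$, so the $\tau$-sum separates into the geometric (or, if $\chi_1^2=1$, degenerate) character sum producing the displayed factor. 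The middle bookkeeping you describe but do not carry out — that the $m$- and $v(a)$-constraints decouple — is exactly what the paper settles by computing $I_\tau(b,x)$ and noting it is independent of $\tau$ once $v(1-x)\le v(\tau_2)\le v(b)$.
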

For the proofs of Lemma~\ref{zhangs_lln_kompakt} and \ref{zhangs_lln_nichtkompakt} we follow a hint given orally by Uwe Weselmann. Write
\begin{equation*}
 \phi(x)=\sum_{\tau\in  T(F)/T(\mathbf o_F)}\chi(\tau)\mathbf 1_{\tau\operatorname{GL}_2(\mathbf o_F)}(x).
\end{equation*}
For the Hecke operator we find 
\begin{equation*}
 \mathbb T_b\mathbf 1_{\tau\operatorname{GL}_2(\mathbf o_F)}(x) =\vol(\operatorname{GL}_2(\mathbf o_F))\mathbf 1_{\tau b^{-1}H(b)}(x),
\end{equation*}
as $b^{-1}H(b)=\{h\in \operatorname{GL}_2(F)\mid h^{-1}\in H(b)\}$. Noticing that the Hecke operator is right invariant under multiplication by $y\in \operatorname{GL}_2(\mathbf o_F)$, i.e. $\mathbb T_b\phi(xy)=\mathbb T_b\phi(x)$, we get
\begin{equation}\label{formel_beweis_zhangs_lln}
 <\mathbb T_b\phi,\phi>_x=\vol(\operatorname{GL}_2(\mathbf o_F))^2\sum_\tau\chi(\tau)\int_T\mathbf 1_{\tau b^{-1}H(b)}(t^{-1}\gamma(x)t)~dt.
\end{equation}
This formula  is evaluated in the different cases.
\begin{proof}[Proof of Lemma~\ref{zhangs_lln_kompakt}]
Let $K=F(\sqrt A)$, where $v(A)=0$. Choose a tracefree $\gamma(x)=\sqrt A +\epsilon(\gamma_1+\gamma_2\sqrt A)$, where $\NN(\gamma_1+\gamma_2\sqrt A)=x$. The conditions for the integrands of (\ref{formel_beweis_zhangs_lln}) not vanishing are
\begin{align*}
 \tau^{-1}b\sqrt A&\in\mathbf o_K\\
 \tau^{-1}b\bar t^{-1}t(\gamma_1+\gamma_2\sqrt A)&\in\mathbf o_K\\
 \det(t^{-1}\gamma(x)t)=A(x-1)&\in b^{-1}\NN(\tau)\mathbf o_F^\times.
\end{align*}
They are equivalent to $\lvert \NN(\tau)\rvert=\lvert b(1-x)\rvert$ and $\lvert b\rvert\leq \min\{\lvert\frac{1-x}{x}\rvert,\lvert 1-x\rvert\}$.
There is only one coset $\tau\in T(F)/T(\mathbf o_F)$ satisfying this, and this coset only exists if $b\in(1-x)\NN$. Thus,
\begin{eqnarray*}
 <\mathbb T_b\phi,\phi>_x&=&\vol(\operatorname{GL}_2(\mathbf o_F))^2\vol_T(T)\\
&&\cdot\left(\mathbf 1_{\NN\backslash (1+\wp)}(x)\mathbf 1_{\mathbf o_F\cap\NN}(b)+\mathbf 1_{1+\wp}(x)\mathbf 1_{(1-x)(\mathbf o_F\cap\NN)}(b)\right),
\end{eqnarray*}
which equals the claimed result. 
\end{proof}
\begin{proof}[Proof of Lemma~\ref{zhangs_lln_nichtkompakt}]
 First evaluate the integral
\begin{equation*}
 I_\tau(b,x):=\int_T\mathbf 1_{\tau b^{-1}H(b)}(t^{-1}\gamma(x)t)~dt.
\end{equation*}
Choose $\gamma(x)=\begin{pmatrix}-1&x\\-1&1\end{pmatrix}$ tracefree, and set $\tau=(\tau_1,\tau_2)\in K^\times/\mathbf o_K^\times$ as well as $t=(a,1)\in T$. 
The conditions for the integrand of (\ref{formel_beweis_zhangs_lln}) not vanishing are
\begin{align*}
(-\tau_1^{-1}b,\tau_2^{-1}b)&\in\mathbf o_K,\\
 (-\tau_1^{-1}a^{-1}bx,\tau_2^{-1}ab)&\in\mathbf o_K,\\
 \det(t^{-1}\gamma(x)t)=x-1&\in\NN(\tau)b^{-1}\mathbf o_K^\times.
\end{align*}
That is: Only if $v(\tau_2)=-v(\tau_1)+v(b)+v(1-x)$ satisfies $v(1-x)\leq v(\tau_2)\leq v(b)$, the integral does not vanish. Then the scope of integration  is given by $-v(b)+v(\tau_2)\leq v(a)\leq v(\tau_2)+v(x)-v(1-x)$ and the integral equals
\begin{equation*}
  I_\tau(b,x)=\vol^\times(\mathbf o_F^\times)\left(v(b)+v(x)-v(1-x)+1\right)\mathbf 1_{\mathbf o_F\cap\wp^{v(1-x)-v(x)}}(b).
\end{equation*}
Evaluating $\chi(\tau)$ we get $\chi(\tau)=\chi_1(b(1-x))\chi_1^{-2}(\tau_2)$, as $\chi$ is unramified. Summing up the terms of (\ref{formel_beweis_zhangs_lln}) yields the claim.
\end{proof}
The other constituents of the local Gross-Zagier formulae are the Whittaker products of newforms for both the Theta series $\Pi(\chi)$ and the Eisensteinseries $\Pi(1,\omega)$ at $s=\frac{1}{2}$. By Hypothesis~\ref{annahme_fuer_geom_hecke}, the Theta series equals $\Pi(\chi_1,\chi_1^{-1})$ if $K/F$ splits, and it equals $\Pi(1,\omega)$ if $K/F$ is a field extension. Thus, all occuring representations are principal series and the newforms read in the Kirillov model are given by (\ref{gleichung_whittaker_neuform}).
In case of a field extension we get
\begin{equation*}
 W_{\theta, new}(a)=W_{E, new}(a)=\lvert a\rvert^{\frac{1}{2}}\mathbf 1_{\mathbf o_F\cap\NN}(a)\vol(\mathbf o_F)\vol^\times(\mathbf o_F^\times).
\end{equation*}
In case $K/F$ splits one gets
\begin{equation*}
 W_{\theta,new}(a)=\lvert a\rvert^{\frac{1}{2}}\mathbf 1_{\mathbf o_F}(a)\vol(\mathbf o_F)\vol^\times(\mathbf o_F^\times)\left\{\begin{matrix}\frac{\chi_1(a\pi)-\chi_1^{-1}(a\pi)}{\chi_1(\pi)-\chi_1^{-1}(\pi)},&\textrm{if } \chi_1^2\not=1\\\chi_1(a)(v(a)+1),&\textrm{if }\chi_1^2=1\end{matrix}\right.,
\end{equation*}
while
\begin{equation*}
 W_{E,new}(a)=\lvert a\rvert^{\frac{1}{2}}\mathbf 1_{\mathbf o_F}(a)(v(a)+1)\vol(\mathbf o_F)\vol^\times(\mathbf o_F^\times).
\end{equation*}
Summing up, we get the following Lemma on the shape of Whittaker products for newforms. We give two expressions of them, the first one using the variables $\xi=\frac{x}{x-1}$ and $\eta=1-\xi$ the second one using the variable $x$. 
\begin{lem}\label{lemma_whittaker_neuform_produkte}
(\cite{zhang} Lemma~3.4.1)
Assume Hypothesis~\ref{annahme_fuer_geom_hecke}. Then the Whittaker products for the newforms of Theta series and Eisenstein series have the following form up to the factor $\vol(\mathbf o_F)^2\vol^\times(\mathbf o_F^\times)^2$.
If $K/F$ is a field extension, then
\begin{eqnarray*}
 W_{\theta, new}(b\eta)W_{E, new}(b\xi) &=&
\lvert \xi\eta\rvert^{\frac{1}{2}}\lvert b\rvert \mathbf 1_{\mathbf o_F}(b\xi)\mathbf 1_{\mathbf o_F}(b\eta)\\
&=& \lvert \xi\eta\rvert^{\frac{1}{2}}\lvert b\rvert\mathbf 1_{\frac{1-x}{x}(\mathbf o_F\cap\NN)}(b)\mathbf 1_{(1-x)(\mathbf o_F\cap\NN)}(b).
\end{eqnarray*}
If $K/F$ splits and $\chi$ is quadratic, then
\begin{eqnarray*}
 && W_{\theta, new}(b\eta)W_{E, new}(b\xi)\\
&&\quad\quad=\lvert \xi\eta\rvert^{\frac{1}{2}}\lvert b\rvert \mathbf 1_{\mathbf o_F}(b\xi)\mathbf 1_{\mathbf o_F}(b\eta)\chi_1(b\eta)\left(v(b\xi)+1\right)\left(v(b\eta)+1\right)\\
&&\quad\quad=\lvert \xi\eta\rvert^{\frac{1}{2}}\lvert b\rvert\mathbf 1_{\frac{1-x}{x}\mathbf o_F\cap(1-x)\mathbf o_F}(b)\chi_1(b(1-x))\bigl(v(b)+v(\frac{x}{1-x})+1\bigr)\cdot\\
&&\quad\quad\quad\quad\quad\quad\quad\quad\quad\quad\quad\quad\quad \cdot\bigl(v(b)-v(1-x)+1\bigr).
\end{eqnarray*}
 If $K/F$ splits and $\chi$ is not quadratic, then
\begin{eqnarray*}
 && W_{\theta, new}(b\eta)W_{E, new}(b\xi)\\
&&\quad\quad=\lvert \xi\eta\rvert^{\frac{1}{2}}\lvert b\rvert \mathbf 1_{\mathbf o_F}(b\xi)\mathbf 1_{\mathbf o_F}(b\eta)\left(v(b\xi)+1\right)\frac{\chi_1(b\eta\pi)-\chi_1^{-1}(b\eta\pi)}{\chi_1(\pi)-\chi_1^{-1}(\pi)}\\
&&\quad\quad=\lvert \xi\eta\rvert^{\frac{1}{2}}\lvert b\rvert\mathbf 1_{\frac{1-x}{x}\mathbf o_F\cap(1-x)\mathbf o_F}(b)\bigl(v(b)+v(\frac{x}{1-x})+1\bigr)\cdot\\
&&\quad\quad\quad\quad\quad\quad\quad\quad\quad \cdot\frac{\chi_1(b(1-x)^{-1}\pi)-\chi_1^{-1}(b(1-x)^{-1}\pi)}{\chi_1(\pi)-\chi_1^{-1}(\pi)}.
\end{eqnarray*}
\end{lem}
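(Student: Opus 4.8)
The plan is to read off the two newform Kirillov functions from formula~(\ref{gleichung_whittaker_neuform}) once the relevant representations are identified, then to substitute $a=b\eta$ and $a=b\xi$, multiply, and rewrite the outcome in the coordinate $x$. Under Hypothesis~\ref{annahme_fuer_geom_hecke} both $\Pi(\chi)$ and the Eisenstein series $\Pi(1,\omega)$ at $s=\frac{1}{2}$ are unramified principal series --- in particular no supercuspidal occurs, since by Corollary~\ref{cor_chi} a field extension $K/F$ forces $\chi=1$ --- so~(\ref{gleichung_whittaker_neuform}) applies directly. More precisely: if $K/F$ is a field extension then $\chi=1$ and both representations equal $\Pi(1,\omega)$ with $\omega$ the unramified quadratic character, so $\mu_1=1\neq\mu_2=\omega$; if $K/F$ splits then $\omega=1$, the theta series is $\Pi(\chi_1,\chi_1^{-1})$, and the Eisenstein series is $\Pi(1,1)$.

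First I would treat the field case. With $\mu_1=1$ and $\mu_2=\omega$, formula~(\ref{gleichung_whittaker_neuform}) gives $W_{\mathrm{new}}(a)=|a|^{\frac{1}{2}}\mathbf 1_{\mathbf o_F}(a)\frac{1-\omega(a\pi)}{1-\omega(\pi)}$ up to $\vol(\mathbf o_F)\vol^\times(\mathbf o_F^\times)$; since $K/F$ is unramified we have $\omega(\pi)=-1$, hence $\frac{1-\omega(a\pi)}{1-\omega(\pi)}=\frac{1}{2}\bigl(1+(-1)^{v(a)}\bigr)=\mathbf 1_{\NN}(a)$, so $W_{\theta,\mathrm{new}}(a)=W_{E,\mathrm{new}}(a)=|a|^{\frac{1}{2}}\mathbf 1_{\mathbf o_F\cap\NN}(a)$ up to the stated volume factor, exactly as recorded before the statement. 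Multiplying the values at $a=b\eta$ and at $a=b\xi$ pulls out the factor $|b\eta|^{\frac{1}{2}}|b\xi|^{\frac{1}{2}}=|b|\,|\xi\eta|^{\frac{1}{2}}$, and it then only remains to rewrite the supports: $b\xi\in\mathbf o_F\cap\NN$ iff $b\in\frac{x-1}{x}(\mathbf o_F\cap\NN)$, and $b\eta\in\mathbf o_F\cap\NN$ iff $b\in(x-1)(\mathbf o_F\cap\NN)$; as $-1\in\NN$ (again because $K/F$ is unramified), these sets coincide with $\frac{1-x}{x}(\mathbf o_F\cap\NN)$ and $(1-x)(\mathbf o_F\cap\NN)$, which gives the asserted shape in the variable $x$.

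For the split case I would proceed in the same way, now using both branches of~(\ref{gleichung_whittaker_neuform}): the Eisenstein newform always contributes $|a|^{\frac{1}{2}}\mathbf 1_{\mathbf o_F}(a)\bigl(v(a)+1\bigr)$, while the theta newform contributes $|a|^{\frac{1}{2}}\mathbf 1_{\mathbf o_F}(a)\chi_1(a)\bigl(v(a)+1\bigr)$ when $\chi_1^2=1$ and $|a|^{\frac{1}{2}}\mathbf 1_{\mathbf o_F}(a)\frac{\chi_1(a\pi)-\chi_1^{-1}(a\pi)}{\chi_1(\pi)-\chi_1^{-1}(\pi)}$ otherwise. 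Substituting $a=b\eta$ and $a=b\xi$, multiplying, and pulling out $|b|\,|\xi\eta|^{\frac{1}{2}}$ as before, the support becomes $\mathbf 1_{\frac{1-x}{x}\mathbf o_F\cap(1-x)\mathbf o_F}(b)$, the valuation factors become $v(b\xi)+1=v(b)+v(\frac{x}{1-x})+1$ and $v(b\eta)+1=v(b)-v(1-x)+1$, and the character factors satisfy $\chi_1(b\eta)=\chi_1(b(1-x))$ and $\chi_1(b\eta\pi)=\chi_1(b(1-x)^{-1}\pi)$ because $\chi_1$ is unramified, so $\chi_1(-1)=1$; collecting these substitutions yields the three displayed formulas. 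All of this is routine, and the only point that requires care is the bookkeeping in passing between the coordinates $(\xi,\eta)$ and $x$ together with the sign factors $\chi_1(-1)$ and $\omega(-1)$, both trivial under the unramified hypothesis; so I do not expect a genuine obstacle here --- the real content of the lemma is just the explicit description of the unramified newform Whittaker functions coming from~(\ref{gleichung_whittaker_neuform}).
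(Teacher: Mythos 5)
Your proposal is correct and follows essentially the same route as the paper: identify the relevant representations under Hypothesis~\ref{annahme_fuer_geom_hecke}, read off the two newform Kirillov functions from~(\ref{gleichung_whittaker_neuform}), multiply, and translate the result into the coordinate $x$ via $\xi=\frac{x}{x-1}$, $\eta=\frac{1}{1-x}$. One small imprecision: you attribute $\chi_1(b\eta)=\chi_1(b(1-x))$ to $\chi_1$ being unramified and hence $\chi_1(-1)=1$, but since $\eta=(1-x)^{-1}$ exactly (no stray sign), what is actually used is $\chi_1((1-x)^{-1})=\chi_1(1-x)$, i.e.\ $\chi_1^2=1$, which holds by assumption in that branch; by contrast the identity $\chi_1(b\eta\pi)=\chi_1(b(1-x)^{-1}\pi)$ in the non-quadratic branch is just the substitution $\eta=(1-x)^{-1}$ and needs no hypothesis at all.
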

In comparing Lemma~\ref{zhangs_lln_kompakt} resp.~\ref{zhangs_lln_nichtkompakt} with Lemma~\ref{lemma_whittaker_neuform_produkte} one now gets the local Gross-Zagier formula by Zhang:
\begin{thm}\label{zhangs_local_Gross-Zagier}
(\cite{zhang} Lemma~4.3.1)
 Assume Hypothesis~\ref{annahme_fuer_geom_hecke}. Let $W_{\theta, new}$ resp. $W_{E,new}$ be the newform for the Theta series resp. Eisenstein series. Let $\phi=\chi\cdot\mathbf 1_{\operatorname{GL}_2(\mathbf o_F)}$. Then up to a factor of volumes,
\begin{equation*}
 W_{\theta, new}(b\eta)W_{E, new}(b\xi)=\lvert \xi\eta\rvert^{\frac{1}{2}}\lvert b\rvert <\mathbb T_b\phi,\phi>_{x=\frac{\xi}{\xi-1}}.
\end{equation*}
\end{thm}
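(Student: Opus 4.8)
The plan is to prove the identity by a direct comparison of the two computations the preceding lemmas have already carried out. Lemmas~\ref{zhangs_lln_kompakt} and \ref{zhangs_lln_nichtkompakt} give the closed form of $<\mathbb T_b\phi,\phi>_x$ (for $K/F$ a field, resp.\ split), while Lemma~\ref{lemma_whittaker_neuform_produkte} gives the closed form of $W_{\theta,new}(b\eta)W_{E,new}(b\xi)$ — crucially, already written out both in the variables $\xi,\eta$ and in the variable $x=\frac{\xi}{\xi-1}$. So I would split into the three cases $K/F$ a field extension, $K/F$ split with $\chi$ quadratic, and $K/F$ split with $\chi$ not quadratic, and in each case read off that the $x$-shape recorded in Lemma~\ref{lemma_whittaker_neuform_produkte} equals $\lvert\xi\eta\rvert^{\frac12}\lvert b\rvert$ times the right-hand side of Lemma~\ref{zhangs_lln_kompakt} resp.\ Lemma~\ref{zhangs_lln_nichtkompakt}, up to the single constant of volumes that the statement allows.

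The substantive work is the transport of the substitution, which I would set up once: from $\xi=\frac{x}{x-1}$ one gets $\eta=1-\xi=\frac{1}{1-x}$ and $\xi\eta=\frac{-x}{(1-x)^2}$, hence $\lvert\xi\eta\rvert^{\frac12}=\lvert x\rvert^{\frac12}\lvert 1-x\rvert^{-1}$, $b\xi=\frac{bx}{x-1}$ and $b\eta=\frac{b}{1-x}$. Consequently the support conditions become $b\xi\in\mathbf o_F\Leftrightarrow b\in\tfrac{1-x}{x}\mathbf o_F$ and $b\eta\in\mathbf o_F\Leftrightarrow b\in(1-x)\mathbf o_F$; the valuation factors become $v(b\xi)+1=v(b)+v(\tfrac{x}{1-x})+1$ and $v(b\eta)+1=v(b)-v(1-x)+1$; and, $\chi_1$ being unramified, $\chi_1(b\eta)=\chi_1(b(1-x))$ when $\chi_1^2=1$, while in the non-quadratic case $\frac{\chi_1(b\eta\pi)-\chi_1^{-1}(b\eta\pi)}{\chi_1(\pi)-\chi_1^{-1}(\pi)}$ equals $\frac{\chi_1(b(1-x)^{-1}\pi)-\chi_1^{-1}(b(1-x)^{-1}\pi)}{\chi_1(\pi)-\chi_1^{-1}(\pi)}$. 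These are precisely the ingredients occurring in Lemmas~\ref{zhangs_lln_kompakt} and \ref{zhangs_lln_nichtkompakt}, so the matching reduces to a term-by-term check.

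One point deserves comment, in the field-extension case. There $\chi=1$ by Corollary~\ref{cor_chi}, both newforms are those of $\Pi(1,\omega)$, and specializing \eqref{gleichung_whittaker_neuform} to $\mu_1=1$, $\mu_2=\omega$ produces the factor $\tfrac{1+\omega(a)}{2}=\mathbf 1_{\NN}(a)$ for $a\in\mathbf o_F$; thus the Whittaker product is supported where $b\xi\in\mathbf o_F\cap\NN$ and $b\eta\in\mathbf o_F\cap\NN$, i.e.\ $b\in\tfrac{x-1}{x}(\mathbf o_F\cap\NN)$ and $b\in(1-x)(\mathbf o_F\cap\NN)$. Under Hypothesis~\ref{annahme_fuer_geom_hecke} the extension $K/F$ is unramified, so $\NN(K^\times)$ consists of the elements of even valuation and in particular $-1\in\NN(K^\times)$; hence $\mathbf o_F\cap\NN$ is stable under $b\mapsto -b$ and $\tfrac{x-1}{x}(\mathbf o_F\cap\NN)=\tfrac{1-x}{x}(\mathbf o_F\cap\NN)$, matching Lemma~\ref{zhangs_lln_kompakt}. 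The extra factor $\mathbf 1_{\NN}(x)$ appearing there is then automatic, being the ratio of the two $b$-norm-conditions. This is the incarnation here of the hypothesis $\omega(-\xi\eta)=(-1)^{\delta(D)}$ of Theorem~\ref{satz_matching}, which is vacuous in the totally unramified setting with $\delta(D)=0$.

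I do not anticipate a genuine obstacle: the argument is entirely bookkeeping. The two things to watch are, first, that the various volumes — $\vol(\operatorname{GL}_2(\mathbf o_F))$, $\vol_T(T)$, $\vol(\mathbf o_F)$, $\vol^\times(\mathbf o_F^\times)$ — coming from \eqref{formel_beweis_zhangs_lln} on the linking side and from \eqref{gleichung_whittaker_neuform} on the automorphic side are collected correctly into the one constant permitted by the statement; and second, that the $\chi_1$-arguments are carried with the correct exponent sign (it is $b\eta=b(1-x)^{-1}$, not $b(1-x)$, that appears, before one is allowed to invoke $\chi_1^2=1$), so that the quadratic and the non-quadratic split cases are not conflated.
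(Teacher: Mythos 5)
Your proposal is correct and coincides with the paper's own proof: the theorem is obtained there exactly by comparing Lemmas~\ref{zhangs_lln_kompakt} and \ref{zhangs_lln_nichtkompakt} with Lemma~\ref{lemma_whittaker_neuform_produkte}, whose $x$-variable expressions already encode the substitution $\xi=\frac{x}{x-1}$, $\eta=\frac{1}{1-x}$ that you carry out. Your extra observations (the unramified-case identity $\frac{1+\omega(a)}{2}=\mathbf 1_{\NN}(a)$, the automatic factor $\mathbf 1_{\NN}(x)$, and the care with $\chi_1(b\eta)$ versus $\chi_1(b(1-x))$) are exactly the bookkeeping implicit in that comparison.
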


\subsection{Rephrasing  local Gross-Zagier}\label{section_meine_Gross-Zagier_formel}
As a test for the effectivity of the operator $\mathbf S_b$  constructed in Section~\ref{section_geom_Hecke}, we rewrite Zhang's local Gross-Zagier formula in terms of $\mathbf S_b$.
\begin{thm}\label{neuformulierung_local_Gross-Zagier}
Assume Hypothesis~\ref{annahme_fuer_geom_hecke}. Assume further, that $\chi_1^2=1$ in case $K/F$ splits. Let $W_{\theta, new}$ resp. $W_{E,new}$ be the newform for the Theta series resp. Eisenstein series. Let $\phi=\chi\cdot\mathbf 1_{\operatorname{GL}_2(\mathbf o_F)}$. Then up to a factor of volumes, 
\begin{equation*}
 W_{\theta, new}(b\eta)W_{E, new}(b\xi)=\lvert \xi\eta\rvert^{\frac{1}{2}}\lvert b\rvert\mathbf S_b<\phi,\phi>_x+O(v(b)),
\end{equation*}
where in case $K/F$ a field extension the term of  $O(v(b))$ is actually zero, while in case $K/F$ split the term of $O(v(b))$ can be given precisely by collecting terms in the proof of Example~\ref{bsp_lln_nichtkompakt}.
\end{thm}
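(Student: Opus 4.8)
The plan is to compute $\mathbf S_b<\phi,\phi>_x$ explicitly by substituting the translated local linking numbers of Examples~\ref{bsp_lln_translatiert_kompakt} and~\ref{bsp_lln_nichtkompakt} into the reduced forms of the operator supplied by Propositions~\ref{prop_geom_hecke_kompakt} and~\ref{prop_geom_hecke_nonkompakt}, and then to match the result, line by line, with the Whittaker products of Lemma~\ref{lemma_whittaker_neuform_produkte}. By Hypothesis~\ref{annahme_fuer_geom_hecke} together with Corollary~\ref{cor_chi}, in the case of a field extension $K/F$ the character $\chi$ is trivial, so $\phi=\chi\cdot\mathbf 1_{\operatorname{GL}_2(\mathbf o_F)}$ is exactly the test function of Example~\ref{bsp_lln_translatiert_kompakt}; in the split case with $\chi_1^2=1$ it is the one of Example~\ref{bsp_lln_nichtkompakt}. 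So no new $\wp$-adic integration is needed, and the whole proof is bookkeeping of finite sums.

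For the compact torus I would argue as follows. Here $\chi_1=1$, and by Proposition~\ref{prop_geom_hecke_kompakt} the operator is $\mathbf S_b<\phi,\phi>_x=\frac{1}{2}\sum_{s=0,1}\sum_{i=0}^{v(b)}|\pi^{v(b)-i}|^{-1}\omega(b(1-x))^{i+s}<\phi,\begin{pmatrix}\pi^{(-1)^s(v(b)-i)}&0\\0&1\end{pmatrix}.\phi>_x$. Reading off Example~\ref{bsp_lln_translatiert_kompakt}, the translated linking number, as a function of the translation parameter $\beta$, is supported on the single valuation $v(\beta)=0$ (if $x\notin 1+\wp$) or on $v(\beta)=\pm v(1-x)$ (if $x\in 1+\wp$); hence for each $s$ exactly one index $i$ contributes, and at that index the weight $q^{-v(1-x)}$ of the example is cancelled by the normalising factor $|\pi^{v(b)-i}|^{-1}$. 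What remains is $\frac{\operatorname{vol}}{2}\,\omega(b(1-x))^{v(b)-v(1-x)}\bigl(1+\omega(b(1-x))\bigr)$, and the factor $1+\omega(b(1-x))$ forces $b(1-x)\in\operatorname{N}$ and absorbs the remaining sign. Comparing the resulting locus in $b$ with $\mathbf 1_{\frac{1-x}{x}(\mathbf o_F\cap\operatorname{N})}(b)\,\mathbf 1_{(1-x)(\mathbf o_F\cap\operatorname{N})}(b)$ (note $v(x)=0$ on the relevant set, so $\frac{1-x}{x}$ and $1-x$ have equal valuation) shows that $\mathbf S_b<\phi,\phi>_x$ is a constant multiple of this indicator with no leftover dependence on $v(b)$; multiplying by $|\xi\eta|^{\frac{1}{2}}|b|$ recovers the first line of Lemma~\ref{lemma_whittaker_neuform_produkte} exactly, so the error term vanishes and the formula coincides with Zhang's Theorem~\ref{zhangs_local_Gross-Zagier}.

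For the split torus with $\chi_1^2=1$ one has $\mathbf S_b=\frac{1}{2}\mathbf S_b^+$ (the two signs coincide) and $\omega=1$, so by Proposition~\ref{prop_geom_hecke_nonkompakt} the operator is $\frac{1}{2}\sum_{s,i}\chi_1(\pi)^{i(-1)^s}|\pi^{v(b)-i}|^{-1}<\phi,\begin{pmatrix}\pi^{(-1)^s(v(b)-i)}&0\\0&1\end{pmatrix}.\phi>_x^+$. I would sort the terms of Example~\ref{bsp_lln_nichtkompakt} into the unit-supported pieces, which meet only the summand $i$ with $v(b)-i\in\{0,v(1-x)\}$ and so contribute at most an amount linear in $v(b)$, and the two bulk pieces carrying $|b|\,v(b)$ on a $\wp$-neighbourhood of $0$ and $|b|^{-1}v(b)$ on a $\wp$-neighbourhood of $\infty$, whose contributions are the sums $\sum_i\chi_1(\pi)^{2i}(\cdots)$ already met in the proof of Proposition~\ref{prop_geom_hecke_nonkompakt}. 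Since $\chi_1^2=1$ these become $\sum_i(\text{affine in }v(b)-i)$, hence quadratic in $v(b)$; collecting the four constants $c_{\pm,1},d_{\pm,1}$ read off from Example~\ref{bsp_lln_nichtkompakt} (up to the global factor $\chi_1(b(1-x))$ there), the $v(b)^2$-coefficient of $\mathbf S_b<\phi,\phi>_x$ comes out as $\chi_1(b(1-x))\operatorname{vol}$, which after multiplication by $|\xi\eta|^{\frac{1}{2}}|b|$ equals the $v(b)^2$-coefficient of $\bigl(v(b)+v(\frac{x}{1-x})+1\bigr)\bigl(v(b)-v(1-x)+1\bigr)$ in the second line of Lemma~\ref{lemma_whittaker_neuform_produkte}. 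The discrepancy is a function of order $v(b)$, obtained by keeping the linear and constant parts of the same sums; this is the announced $O(v(b))$ term, and it can be written out explicitly from the calculation in Example~\ref{bsp_lln_nichtkompakt}.

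The main obstacle is the split case: although conceptually routine, feeding the multi-term expression of Example~\ref{bsp_lln_nichtkompakt} into the double sum defining $\mathbf S_b^+$ demands careful index bookkeeping --- the split into $v(b)-i<n$ versus $v(b)-i\geq n$, the effect of $s$ on the exponent $\pi^{(-1)^s(v(b)-i)}$, and the collapse $\chi_1(\pi)^{2i}=1$ --- and one must verify that the support conditions on $b$ produced by the operator agree with those of the Whittaker product, both for small and for large $v(b)$. The analogous check in the compact case is short precisely because only one term of the inner sum ever survives.
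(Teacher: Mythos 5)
Your proposal follows the paper's own route exactly: the paper proves the theorem by evaluating $\mathbf S_b<\phi,\phi>_x$ in Lemmas~\ref{mein_operator_spezielle_lln_kompakt} and \ref{mein_operator_spezielle_lln_nichtkompakt}, i.e.\ by feeding the translated linking numbers of Examples~\ref{bsp_lln_translatiert_kompakt} and \ref{bsp_lln_nichtkompakt} into the reduced operator sums of Propositions~\ref{prop_geom_hecke_kompakt} and \ref{prop_geom_hecke_nonkompakt} and comparing the outcome with Lemma~\ref{lemma_whittaker_neuform_produkte}, which is precisely your bookkeeping (in the compact case only one index of the inner sum survives, and the $\omega$-factor produces the norm condition, as you say). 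The one quibble is arithmetic: in the split case the double sum actually yields leading term $2\,v(b)^2\,\chi_1(b(1-x))$ times $\vol^\times(\mathbf o_F^\times)\vol(\mathbf o_F)^2$ (cf.\ Lemma~\ref{mein_operator_spezielle_lln_nichtkompakt}), not coefficient $1$ as you assert, but since the statement is only claimed up to a factor of volumes and modulo $O(v(b))$, this constant discrepancy does not affect the argument.
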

\begin{proof}[Proof of Theorem~\ref{neuformulierung_local_Gross-Zagier}]
One has to compare the Whittaker products for newforms given in Lemma~\ref{lemma_whittaker_neuform_produkte} with
the action of the operator $\mathbf S_b$ on the special local linking number belonging to $\phi$. This action is calculated in Lemma~\ref{mein_operator_spezielle_lln_kompakt} resp. \ref{mein_operator_spezielle_lln_nichtkompakt}  below. 
\end{proof}
\begin{lem}\label{mein_operator_spezielle_lln_kompakt}
 Let $K/F$ be a field extension. Assume Hypothesis~\ref{annahme_fuer_geom_hecke}. Let $\phi=\chi\cdot\mathbf 1_{\operatorname{GL}_2(\mathbf o_F)}$. Then up the factor $\vol_T(T)\vol^\times(\mathbf o_F^\times)\vol(\mathbf o_F)$, 
\begin{equation*}
 \mathbf S_b<\phi,\phi>_x=
\mathbf 1_{\NN}(x)\mathbf 1_{\frac{1-x}{x}(\mathbf o_F\cap\NN)}(b)\mathbf 1_{(1-x)(\mathbf o_F\cap\NN)}(b).
\end{equation*}
\end{lem}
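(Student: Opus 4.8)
The plan is to feed the explicit evaluation of Example~\ref{bsp_lln_translatiert_kompakt} into the operator $\mathbf S_b$ and to read off the three cases. First I would observe that Hypothesis~\ref{annahme_fuer_geom_hecke} together with $K/F$ a field extension forces $K/F$ to be an \emph{unramified} field extension and $\chi$ unramified, so $\chi=1$ by Corollary~\ref{cor_chi}; hence $\phi=\mathbf 1_{\operatorname{GL}_2(\mathbf o_F)}$, the torus $T$ is compact, and the translated local linking numbers $<\phi,\begin{pmatrix}\beta&0\\0&1\end{pmatrix}.\phi>_x$ are exactly those furnished by Example~\ref{bsp_lln_translatiert_kompakt}. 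I would also record the facts, valid for an unramified extension, that $\omega(y)=(-1)^{v(y)}$ and $\NN(K^\times)=\{y\in F^\times\mid v(y)\text{ even}\}$, that $\mathbf S_b$ is only considered for $v(b)\geq 0$, and I abbreviate $\vol:=\vol_T(T)\vol^\times(\mathbf o_F^\times)\vol(\mathbf o_F)$.

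Next I would substitute into the reduced form of $\mathbf S_b$ from Proposition~\ref{prop_geom_hecke_kompakt},
\[
\mathbf S_b<\phi,\phi>_x=\tfrac12\sum_{s=0,1}\sum_{i=0}^{v(b)}\frac{\omega(b(1-x))^{i+s}}{\lvert\pi^{v(b)-i}\rvert}<\phi,\begin{pmatrix}\pi^{(-1)^s(v(b)-i)}&0\\0&1\end{pmatrix}.\phi>_x,
\]
and isolate the surviving terms. Regarded as a function of $\beta=\pi^{(-1)^s(v(b)-i)}$, the translated linking number of Example~\ref{bsp_lln_translatiert_kompakt} vanishes unless $x\in\NN$, is supported on $\mathbf o_F^\times$ with value $\vol$ when $x\in\NN\setminus(1+\wp)$, and is supported on $(1-x)^{\pm1}\mathbf o_F^\times$ with value $q^{-v(1-x)}\vol$ when $x\in 1+\wp$. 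Since $v(\beta)=(-1)^s(v(b)-i)$ and $v(b)-i\geq 0$, for each $s$ exactly one index $i$ can contribute: $i=v(b)$ in the first case, and $i=v(b)-v(1-x)$ in the second (the latter only if $v(b)\geq v(1-x)$), and on that term the weight $\lvert\pi^{v(b)-i}\rvert^{-1}$ equals $1$ in the first case and $q^{v(1-x)}$ in the second, so it exactly cancels the $q^{-v(1-x)}$ carried by the Example. Thus each $(s,i)$-summand contributes $\omega(b(1-x))^{i+s}\vol$; since $v(1-x)$ is even whenever $x\in\NN\setminus(1+\wp)$ and $\omega(b(1-x))=\pm1$, the exponent $i+s$ may be replaced by $v(b)-v(1-x)+s$ in both cases. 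Summing over $s$ then yields
\[
\mathbf S_b<\phi,\phi>_x=\tfrac12\,\omega(b(1-x))^{v(b)-v(1-x)}\bigl(1+\omega(b(1-x))\bigr)\vol
\]
when $x\in\NN$ and $v(b)\geq v(1-x)$, and $\mathbf S_b<\phi,\phi>_x=0$ otherwise.

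The final step is a short parity check. Writing $\omega(b(1-x))=(-1)^{v(b)+v(1-x)}$, the displayed quantity equals $\vol$ when $v(b)-v(1-x)$ is even (then $\omega(b(1-x))=1$ and the bracket is $2$) and equals $0$ when it is odd (the bracket vanishes), so together with the range condition one gets $\mathbf S_b<\phi,\phi>_x=\vol$ precisely when $b/(1-x)\in\mathbf o_F\cap\NN$, i.e. when $b\in(1-x)(\mathbf o_F\cap\NN)$. A short computation using $v(x)\in 2\ZZ$ (since $x\in\NN$) and $v(b)\geq 0$ identifies this set with $\tfrac{1-x}{x}(\mathbf o_F\cap\NN)\cap(1-x)(\mathbf o_F\cap\NN)$ within $\{v(b)\geq 0\}$; together with the factor $\mathbf 1_{\NN}(x)$ recording the vanishing case $x\notin\NN$, this gives the asserted identity
\[
\mathbf S_b<\phi,\phi>_x=\vol\cdot\mathbf 1_{\NN}(x)\,\mathbf 1_{\frac{1-x}{x}(\mathbf o_F\cap\NN)}(b)\,\mathbf 1_{(1-x)(\mathbf o_F\cap\NN)}(b).
\]
I do not expect a genuine obstacle here; the only point needing care is the case $x\in 1+\wp$, where $v(1-x)$ has to be tracked simultaneously through the range restriction on $i$, the weight $\lvert\pi^{v(b)-i}\rvert^{-1}$, and the exponent of $\omega$. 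The same computation restricted to the $s=0$ summand gives the corresponding statement for the operator $\mathbf T_b$ of Section~\ref{section_geom_Hecke}.
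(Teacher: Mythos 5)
Your proposal is correct and follows essentially the same route as the paper: substitute the explicit translated linking numbers of Example~\ref{bsp_lln_translatiert_kompakt} into the reduced formula for $\mathbf S_b$ from Proposition~\ref{prop_geom_hecke_kompakt}, note that for each $s$ only one index $i$ survives (with the weight cancelling the $q^{-v(1-x)}$), and finish with the parity bookkeeping identifying $\frac12\,\omega(b(1-x))^{v(b)-v(1-x)}\bigl(1+\omega(b(1-x))\bigr)$ with the stated product of indicator functions. The only cosmetic difference is that the paper keeps the two cases $x\in\NN\setminus(1+\wp)$ and $x\in 1+\wp$ separate (writing the answers as $\mathbf 1_{\NN}(b)$ and $\mathbf 1_{\wp^{v(1-x)}\cap(1-x)\NN}(b)$) while you unify them before matching the sets, which is an equivalent reformulation.
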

\begin{proof}[Proof of Lemma~\ref{mein_operator_spezielle_lln_kompakt}]
 The translated local linking number $<\phi,\begin{pmatrix}\beta&0\\0&1\end{pmatrix}.\phi>_x$ was computed in Example~\ref{bsp_lln_translatiert_kompakt}. One has to compute the sum for the operator $\mathbf S_b$ given in Proposition~\ref{prop_geom_hecke_kompakt}. If $x\in\NN\backslash (1+\wp)$, then up to the factor $\vol_T(T)\vol^\times(\mathbf o_F^\times)\vol(\mathbf o_F)$,
\begin{equation*}
 \mathbf S_b<\phi,\phi>_x=\frac{1}{2}\left(\omega(b(1-x))^{v(b)}+\omega(b(1-x))^{v(b)+1}\right)=\mathbf 1_{\NN}(b).
\end{equation*}
If $x\in1+\wp$, then again up to the factor of volumes
\begin{align*}
 \mathbf S_b<\phi,\phi>_x&=
\frac{1}{2}\mathbf 1_{\wp^{v(1-x)}}(b)\omega(b(1-x))^{v(b)-v(1-x)}\left(1+\omega(b(1-x))\right)\\
&= \mathbf 1_{\wp^{v(1-x)}\cap(1-x)\NN}(b).\qedhere
\end{align*}
\end{proof}
In case $K/F$ we limit ourselves to the case $\chi_1^2=1$. 
\begin{lem}\label{mein_operator_spezielle_lln_nichtkompakt}
Let $K/F$ be split and assume Hypothesis~\ref{annahme_fuer_geom_hecke} as well as $\chi_1^2=1$. Let $\phi=\chi\cdot\mathbf 1_{\operatorname{GL}_2(\mathbf o_F)}$. Then  up the factor $\vol^\times(\mathbf o_F^\times)\vol(\mathbf o_F)^2$, 
\begin{align*}
 &\mathbf S_b<\phi,\phi>_x=\chi_1(b(1-x))\cdot\\
&\quad\left[\mathbf 1_{F^\times\backslash(1+\wp)}(x)\Bigl(2v(b)^2+2(\lvert v(x)\rvert+1)v(b)+(1+q^{-1})(\lvert v(x)\rvert+1)\Bigr)\right.\\
&\quad\left.+\mathbf 1_{1+\wp}(x)\mathbf 1_{\wp^{v(1-x)}}(b)\Bigl(2\bigl(v(b)-v(1-x)+1\bigr)\bigl(v(b)-v(1-x)\bigr)+1\Bigr)\right].
\end{align*}
\end{lem}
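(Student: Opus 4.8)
The plan is to feed the explicit translated local linking number of Example~\ref{bsp_lln_nichtkompakt} into the reduced shape of the geometric Hecke operator supplied by Proposition~\ref{prop_geom_hecke_nonkompakt}. Since $T$ is noncompact one has $\omega=1$, and since $\chi_1^2=1$ the two constituents $\mathbf S_b^{+}$ and $\mathbf S_b^{-}$ coincide, so $\mathbf S_b=\tfrac12\mathbf S_b^{+}$ and it suffices to compute
\begin{equation*}
 \mathbf S_b<\phi,\phi>_x=\frac{1}{2}\sum_{s=0,1}\sum_{i=0}^{v(b)}\frac{\chi_1(\pi)^{i(-1)^s}}{\lvert\pi^{v(b)-i}\rvert}<\phi,\begin{pmatrix}\pi^{(-1)^s(v(b)-i)}&0\\0&1\end{pmatrix}.\phi>_x,
\end{equation*}
where each inner term is read off from Example~\ref{bsp_lln_nichtkompakt} with the variable $b$ there specialised to $\beta:=\pi^{(-1)^s(v(b)-i)}$.

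I would then organise this double sum by the exponent $\beta$. For $s=0$ the valuation $v(\beta)=v(b)-i$ runs through $0,1,\dots,v(b)$, and for $s=1$ the valuation $v(\beta)=i-v(b)$ runs through $0,-1,\dots,-v(b)$; in either case the operator weight equals $\lvert\pi^{v(b)-i}\rvert^{-1}=q^{\,v(b)-i}$, which is exactly the reciprocal of the factor $\lvert\beta\rvert$ (resp.\ $\lvert\beta\rvert^{-1}$) attached to every non-constant term of Example~\ref{bsp_lln_nichtkompakt}. After this cancellation each summand is affine in $v(\beta)=\pm(v(b)-i)$ with no surviving absolute value, which is why the answer ends up a polynomial in $v(b)$. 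For the characters, a short computation using $\chi_1^2=1$ and $\chi_1$ unramified gives $\chi_1(\pi)^{i(-1)^s}\chi_1(\beta)\chi_1(1-x)=\chi_1(b(1-x))$ in every term, so this factor pulls out in front, matching the prefactor of the statement.

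Finally, the indicator functions of Example~\ref{bsp_lln_nichtkompakt} select the relevant ranges of $i$ and split the computation into $x\notin 1+\wp$ and $x\in 1+\wp$. In the first case the term carrying $\mathbf 1_{\mathbf o_F^\times}(\beta)$ survives only at $i=v(b)$ (for both $s$, hence with total weight $\tfrac12\cdot 2=1$) and yields the constant $(1+q^{-1})(\lvert v(x)\rvert+1)$, while the terms carrying $\mathbf 1_{\wp}(\beta)$ and $\mathbf 1_{\wp}(\beta^{-1})$ run over $0\le i\le v(b)-1$ for $s=0$ resp.\ $s=1$; reducing these to arithmetic progressions $\sum j$ and $\sum 1$ produces $2v(b)^2+2(\lvert v(x)\rvert+1)v(b)$. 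In the second case the indicators $\mathbf 1_{\wp^{v(1-x)+1}}$ and $\mathbf 1_{v(1-x)\mathbf o_F^\times}$ restrict the index to $0\le i\le v(b)-v(1-x)$; this range is non-empty exactly when $v(b)\ge v(1-x)$, which produces the factor $\mathbf 1_{\wp^{v(1-x)}}(b)$, and summing $\sum_{k=1}^{v(b)-v(1-x)}k$ over both $s$ gives $2\bigl(v(b)-v(1-x)+1\bigr)\bigl(v(b)-v(1-x)\bigr)$, with the boundary index $i=v(b)-v(1-x)$ adding the extra $+1$. Adding the two cases yields the claimed formula.

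The one genuinely delicate step is the bookkeeping in the last paragraph: one must pair each of the several terms of Example~\ref{bsp_lln_nichtkompakt} — together with its particular indicator among $\mathbf 1_{\mathbf o_F^\times}$, $\mathbf 1_{\wp}$, $\mathbf 1_{\wp^{v(1-x)+1}}$, $\mathbf 1_{v(1-x)\mathbf o_F^\times}$ and its $\beta$- versus $\beta^{-1}$-incarnation — with the correct half of the double sum over $s\in\{0,1\}$, $0\le i\le v(b)$, and check the valuation arithmetic so that the factors $\lvert\beta\rvert^{\pm1}$ cancel the weights $q^{v(b)-i}$ cleanly and the boundary term $\beta=1$ is counted with the right multiplicity. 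Once these ranges are pinned down the remaining sums are entirely elementary, so no further subtlety is expected.
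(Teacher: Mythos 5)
Your computation is correct and follows the paper's own proof essentially step for step: both arguments start from $\mathbf S_b=\tfrac12\mathbf S_b^{+}$ (valid because $\chi_1^2=1$), substitute the translated local linking numbers from Example~\ref{bsp_lln_nichtkompakt}, exploit the cancellation of the weight $q^{v(b)-i}$ against $\lvert\beta\rvert^{\pm1}$, collect the character factors into $\chi_1(b(1-x))$, and reduce to the same arithmetic sums split between $x\in F^\times\setminus(1+\wp)$ and $x\in 1+\wp$. No new idea is introduced, and the bookkeeping you flag as delicate is resolved exactly as in the paper.
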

\begin{proof}[Proof of Lemma~\ref{mein_operator_spezielle_lln_nichtkompakt}]
 In order to evaluate the action of $\mathbf S_b$, one has to know the translated local linking numbers
$<\phi,\begin{pmatrix}\beta&0\\0&1\end{pmatrix}.\phi>_x$. These were computed in Example~\ref{bsp_lln_nichtkompakt}. The operator $\mathbf S_b$ is given in Proposition~\ref{prop_geom_hecke_nonkompakt}. As $\chi_1$ is quadratic, $\mathbf S_b=\frac{1}{2}\mathbf S_b^+$.
For $x\in 1+\wp$ we compute
\begin{align*}
 &\mathbf S_b<\phi,\phi>_x\\
&=
\chi_1(b(1-x))\mathbf 1_{\wp^{v(1-x)}}(b)\left(1+\sum_{i=0}^{v(b)-v(1-x)-1}4(v(b)-i-v(1-x))\right)\\
&=
\chi_1(b(1-x))\mathbf 1_{\wp^{v(1-x)}}(b)\Bigl(2(v(b)-v(1-x)+1)(v(b)-v(1-x))+1\Bigr),
\end{align*}
while for $x\in F^\times\backslash(1+\wp)$,
\begin{align*}
&\mathbf S_b<\phi,\phi>_x\\
&=
\chi_1(b(1-x))\left[(\lvert v(x)\rvert+1)(1+q^{-1})+\sum_{i=0}^{v(b)-1}\bigl(4(v(b)-i)+2\lvert v(x)\rvert\bigr)\right]\\
&=
\chi_1(b(1-x))\Bigl(2v(b)^2+(1+\lvert v(x)\rvert)(2v(b)+1+q^{-1})\Bigr).\qedhere
\end{align*}
\end{proof}
%%%%%%%%%%%%%%%%%%%%%%%%%%%%%%
%%%
%%%
%%%
%%%%%%%%%%%%%%%%%%%%%%%%%%%%%%%%%%%%%%%%
\section*{Appendix A\\ Proof of Example~\ref{bsp_lln_translatiert_kompakt}}
For $\phi=\chi\cdot\mathbf 1_{\operatorname{GL}_2(\mathbf o_F)}$ one has to compute 
\begin{equation*}
 <\phi,\begin{pmatrix}b&0\\0&1\end{pmatrix}\phi>_x=\int_{T\backslash G}\int_T\phi(t^{-1}\gamma(x)ty)~dt~\bar\phi(y\begin{pmatrix}b&0\\0&1\end{pmatrix})~dy,
\end{equation*}
where by Lemma~\ref{lemma_mirobolische_zerlegung}, one can assume $y$ to be of the form $y=\begin{pmatrix}y_1&y_2\\0&1\end{pmatrix}$ as well as $dy=d^\times y_1~dy_2$. Accordingly one factorizes $t^{-1}\gamma(x)t$, where $\gamma(x)=\sqrt A+\epsilon(\gamma_1+\gamma_2\sqrt A)$ is a tracefree preimage of $x$ under $P$ and $t=\alpha+\beta\sqrt A\in K^\times$:
\begin{equation*}
 t^{-1}\gamma(x)t=\tilde t\begin{pmatrix}g_1&g_2\\0&1\end{pmatrix},
\end{equation*}
where $\tilde t\in K^\times$ and
\begin{equation*}
 g_1=\frac{1+x}{1-x}+\frac{2((\alpha^2+\beta^2A)\gamma_1+2\alpha\beta A\gamma_2)}{(1-x)(\alpha^2-\beta^2A)},
\end{equation*}
\begin{equation*}
 g_2=\frac{2A((\alpha^2+\beta^2A)\gamma_2+2\alpha\beta\gamma_1)}{(1-x)(\alpha^2-\beta^2A)}.
\end{equation*}
The inner integrand is not zero if and only if $g_1y_1\in\mathbf o_F^\times$ as well as $g_1y_2+y_1\in\mathbf o_F$, while the outer integrand doesn't vanish only for $y_1\in b^{-1}\mathbf o_F^\times$ and $y_2\in\mathbf o_F$. Forcing this, one gets the following to conditions for the inner integrand:
\begin{equation}\label{bed_1_kompakt}
 g_1=\frac{1+x}{1-x}+\frac{2((\alpha^2+\beta^2A)\gamma_1+2\alpha\beta A\gamma_2)}{(1-x)(\alpha^2-\beta^2A)}\in b\mathbf o_F^\times,
\end{equation}
\begin{equation}\label{bed_2_kompakt}
 g_1y_2+\frac{2A((\alpha^2+\beta^2A)\gamma_2+2\alpha\beta\gamma_1)}{(1-x)(\alpha^2-\beta^2A)}\in\mathbf o_F.
\end{equation}
In the following, one distinguishes whether $v(\beta)-v(\alpha)\geq 0$ or not as well as whether $x\in\operatorname N$ is a square or not. The contributions and conditions for the scope of integration are determined and marked  by "$\bullet$" for final collection.
%%%%
%%%
%%%%

{\bf 1) Let $x\in F^{\times 2}$ and $v(\frac{\beta}{\alpha})\geq 0$.} Then $\gamma(x)$ can be chosen such that $x=\gamma_1^2$ and one can reduce (\ref{bed_1_kompakt}) and (\ref{bed_2_kompakt}) by $\alpha^2$  assuming $\beta\in\mathbf o_F$ and $\alpha=1$ instead, getting
\begin{equation}\label{bed_11_kompakt}
 g_1=\frac{1+x}{1-x}+\frac{2(1+\beta^2A)\gamma_1}{(1-x)(1-\beta^2A)}\in b\mathbf o_F^\times,
\end{equation}
\begin{equation}\label{bed_21_kompakt}
 g_1y_2+\frac{4A\beta\gamma_1}{(1-x)(1-\beta^2A)}\in\mathbf o_F.
\end{equation}
Now assume first, that $v(x)\not=0$. Then $v(\frac{1+x}{1-x})=0$ and
\begin{equation*}
 v\left(\frac{2(1+\beta^2A)\gamma_1}{(1-x)(1-\beta^2A)}\right)\geq\frac{1}{2}\lvert v(x)\rvert>0,
\end{equation*}
i.e. condition~(\ref{bed_11_kompakt}) is satisfied only for $b\in\mathbf o_F^\times$. In this case, condition~(\ref{bed_21_kompakt}) is satisfied as well and the contribution for $v(x)\not=0$ is given by
\begin{itemize}
 \item[$\bullet$] $v(x)\not=0$: $b\in\mathbf o_F^\times$ and $\beta\in\mathbf o_F$.
\end{itemize}
Now assume $v(x)=0$. Then (\ref{bed_11_kompakt}) is equivalent to
\begin{equation*}
 v\left((1+x)(1-\beta^2A)+2(1+\beta^2A)\gamma_1\right)=v(b)+v(1-x),
\end{equation*}
respectively
\begin{equation}\label{bed_111_kompakt}
 v(b)+v(1-x)=2\operatorname{min}\{v(1+\gamma_1),v(\beta)\}\geq 0.
\end{equation}
In here, $v(b)+v(1-x)>0$ is possible if and only if $\gamma_1\in -1+\wp$ and $\beta\in\wp$. 
One can omit this case by choosing the preimage $\gamma(x)=\sqrt A+\epsilon\gamma_1$ such that $\gamma_1\in 1+\wp$ if $x\in 1+\wp$. This doesn't influnce the result, as the local linking numbers are independent of the choice of the tracefree preimage $\gamma(x)$.

Thus, let $v(b)+v(1-x)=0$. 
Then the case $v(b)\geq 0$ even forces $0=v(b)=v(1-x)$. That is, condition~(\ref{bed_21_kompakt}) is
\begin{equation*}
 \frac{4\beta A\gamma_1}{(1-x)(1-\beta^2A)}\in\mathbf o_F,
\end{equation*}
which here is equivalent to $\beta\in\mathbf o_F$. This yields the contribution
\begin{itemize}
 \item[$\bullet$]
$x\in\mathbf o_F^\times\backslash (1+\wp)$: $b\in\mathbf o_F^\times$ and $\beta\in\mathbf o_F$. 
\end{itemize}
While in case $v(b)<0$, one has $-v(b)=v(1-x)>0$. One assumes again that the square root $\gamma_1$ of $x$  is in $1+\wp$: $\gamma_1\in 1+\wp$. Then condition~(\ref{bed_21_kompakt}) is equivalent to
\begin{equation*}
 \bigl(\beta^2A(1-\gamma_1)^2-(1+\gamma_1)^2\bigr)y_2\in 4\beta A\gamma_1+\wp^{v(1-x)}.
\end{equation*}
As $(1-\gamma_1)^2\in\wp^{2v(1-x)}$, that is $\beta\in\frac{-(1+\gamma_1)^2y_2}{4A\gamma_1}+\wp^{v(1-x)}$. Thus, the contribution in this case is
\begin{itemize}
 \item[$\bullet$]
$x\in 1+\wp$: $v(b)=-v(1-x)$ and $\beta\in\frac{-(1+\sqrt x)^2y_2}{4A\sqrt x}+\wp^{v(1-x)}$.
\end{itemize}
{\bf 2) Let $x\in F^{\times 2}$ and $v(\frac{\beta}{\alpha})< 0$.}
Again, take $x=\gamma_1^2$. By reducing conditions~(\ref{bed_1_kompakt}) and (\ref{bed_2_kompakt}) by $\beta^2$, one can assume $\beta=1$ and $\alpha\in\wp$. The conditions now have the shape
\begin{equation}\label{bed_12_kompakt}
 \frac{1+x}{1-x}+\frac{2(\alpha^2+A)\gamma_1}{(1-x)(\alpha^2-A)}\in b\mathbf o_F^\times,
\end{equation}
\begin{equation}\label{bed_22_kompakt}
 g_1y_2+\frac{4\alpha\gamma_1}{(1-x)(\alpha^2 -A)}\in\mathbf o_F.
\end{equation}
If one substitutes in condition~(\ref{bed_11_kompakt}) resp. (\ref{bed_21_kompakt}) $\beta\mapsto \alpha A^{-1}\in\wp$ and $\gamma_1\mapsto -\gamma_1$, one gets exactly (\ref{bed_12_kompakt}) resp. (\ref{bed_22_kompakt}). Thus, taking $\gamma_1\in -1+\wp$ if $x\in 1+\wp$ this time, one can read off the contributions here from those of the first case:
\begin{itemize}
 \item [$\bullet$]
$v(x)\not=0$ or $x\in\mathbf o_F^\times\backslash(1+\wp)$: $b\in\mathbf o_F^\times$ and $\alpha\in\wp$,
\item[$\bullet$]
$x\in 1+\wp$: $v(b)=-v(1-x)$, $\alpha\in\frac{(1-\sqrt x)^2y_2}{4A\sqrt x}+\wp^{v(1-x)}$ and $y_2\in\wp$.
\end{itemize}

{\bf 3) Let $x\in\operatorname{N}\backslash F^{\times2}$ and $-1\in F^{\times 2}$ and $v(\frac{\beta}{\alpha})\geq 0$.}
In this case, choose the tracefree preimage $\gamma(x)=\sqrt A+\epsilon \gamma_2$ to get $x=-\gamma_2A$ . Reducing the conditions~(\ref{bed_1_kompakt}) and (\ref{bed_2_kompakt}), one can assume $\alpha=1$ and $\beta\in\mathbf o_F$. The conditions now are
\begin{equation}\label{bed_13_kompakt}
 g_1=\frac{1+x}{1-x}+\frac{4\beta A\gamma_2}{(1-x)(1-\beta^2A)}\in b\mathbf o_F^\times,
\end{equation}
\begin{equation}\label{bed_23_kompakt}
 g_1y_2+\frac{2A(1+\beta^2A)\gamma_2}{(1-x)(1-\beta^2A)}\in\mathbf o_F.
\end{equation}
If $v(x)\not=0$, then $v(\frac{1+x}{1-x})=0$ and 
\begin{equation*}
 v\left(\frac{4\beta A\gamma_2}{(1-x)(1-\beta^2A)}\right)\geq \frac{1}{2}\lvert v(x)\rvert>0.
\end{equation*}
Thus, (\ref{bed_13_kompakt}) is equivalent to $v(b)=0$. Then (\ref{bed_23_kompakt}) is satisfied and the contribution is
\begin{itemize}
 \item [$\bullet$] $v(x)\not=0$: $b\in\mathbf o_F^\times$ and $\beta\in\mathbf o_F$.
\end{itemize}
If $v(x)=0$, then $v(1-x)=0$ as $x$ is not a square. Thus,
\begin{equation*}
 v\left(\frac{1+x}{1-x}+\frac{4\beta A\gamma_2}{(1-x)(1-\beta^2A)}\right)=v\bigl((1+x)(1-\beta^2A)+4\beta A\gamma_2\bigr)\geq 0
\end{equation*}
and condition~(\ref{bed_13_kompakt}) implies $v(b)\geq 0$. Then again, condition~(\ref{bed_23_kompakt}) is satisfied. But one has to look more exactly at (\ref{bed_13_kompakt}) to get a sharp condition for $b$. As
\begin{equation*}
 (1+x)(1-\beta^2A)+4\beta A\gamma_2=(1+\beta\gamma_2A)^2-(\beta-\gamma_2)^2A,
\end{equation*}
(\ref{bed_13_kompakt}) can be rewritten as
\begin{equation*}
 v(b)=2\operatorname{min}\{v(1+\beta\gamma_2A),v(\beta-\gamma_2)\}.
\end{equation*}
For $v(b)>0$, both $v(1+\beta\gamma_2A)>0$ and $v(\beta-\gamma_2)>0$ have to be satisfied, that is 
\begin{equation*}
 \beta\in(\gamma_2+\wp)\cap(-(\gamma_2A)^{-1}+\wp)=\emptyset.
\end{equation*}
Thus, $v(b)=0$ and the contribution is
\begin{itemize}
 \item [$\bullet$]
$v(x)=0$: $b\in\mathbf o_F^\times$ and $\beta\in\mathbf o_F$.
\end{itemize}

{\bf 4) Let $x\in\operatorname{N}\backslash F^{\times2}$ and $-1\in F^{\times 2}$ and $v(\frac{\beta}{\alpha})< 0$.}
By reducing conditions~(\ref{bed_1_kompakt}) and (\ref{bed_2_kompakt}), one can assume $\beta=1$ and $\alpha\in\wp$. This case now is done as the previous one substituting $\beta\mapsto\alpha A^{-1}$ and $\gamma_2\mapsto -\gamma_2$ there. The contribution is
\begin{itemize}
\item[$\bullet$]
$x\in \operatorname N\backslash F^{\times 2}$: $b\in\mathbf o_F^\times$ and $\alpha\in \wp$.
\end{itemize}

{\bf 5) Let $x\in\operatorname{N}\backslash F^{\times2}$ and $-1\notin F^{\times 2}$ and $v(\frac{\beta}{\alpha})\geq 0$.} One again assumes $\alpha=1$ and $\beta\in\mathbf o_F$. As $-1$ is not a square, one has without loss of generality $A=-1$. The norm is surjective on $\mathbf o_F^\times$, thus there is $\gamma_3\in\mathbf o_F^\times$ such that $1+\gamma_3^2$ is not a square. Choose $\gamma(x)=\gamma_1(1+\gamma_3\sqrt{-1})$ to get $x=\gamma_1^2(1+\gamma_3^2)$.  Conditions ~(\ref{bed_1_kompakt}) and (\ref{bed_2_kompakt}) now are read as
\begin{equation}\label{bed_15_kompakt}
\frac{1+x}{1-x}+\frac{2\gamma_1(1-\beta^2+2\beta\gamma_3)}{(1-x)(1+\beta^2)}\in b\mathbf o_F^\times,
\end{equation}
\begin{equation}\label{bed_25_kompakt}
g_1y_2-\frac{2\gamma_1\left((1-\beta^2)\gamma_3+2\beta\right)}{(1-x)(1+\beta^2)}\in\mathbf o_F.
\end{equation}
As seen earlier, $1-\beta^2+2\beta\gamma_3\in\mathbf o_F^\times$ and $(1-\beta^2)\gamma_3+2\beta\in\mathbf o_F^\times$. As neither $-1$ nor $x$ are squares in $F$, one has $v(\frac{1+x}{1-x})\geq 0$ as well as 
\begin{equation}\label{bed_151_kompakt}
v\left(\frac{2\gamma_1(1-\beta^2+2\beta\gamma_3)}{(1-x)(1+\beta^2)}\right)\geq 0.
\end{equation}
That is, (\ref{bed_15_kompakt}) implies $b\in\mathbf o_F$. Then (\ref{bed_25_kompakt}) is equivalent to 
\begin{equation*}
\frac{2\gamma_1\left((1-\beta^2)\gamma_3+2\beta\right)}{(1-x)(1+\beta^2)}\in\mathbf o_F,
\end{equation*}
which is true. One studies (\ref{bed_15_kompakt}) further: If $v(x)\not=0$, then in (\ref{bed_151_kompakt}) one even has $>$. Thus, (\ref{bed_15_kompakt}) means $v(b)=0$. If $v(x)=0$, then (\ref{bed_15_kompakt}) is equivalent to
\begin{equation}\label{bed_152_kompakt}
v\left((1+x)(1+\beta^2)+2\gamma_1(1-\beta^2+2\beta\gamma_3)\right)=v(b).
\end{equation}
Assuming $v(b)>0$ first, one gets out of (\ref{bed_152_kompakt}) the condition
\begin{equation*}
(1+x-2\gamma_1)\beta^2+4\gamma_1\gamma_2\beta+(1+x+2\gamma_1)\in\wp.
\end{equation*}
This is a quadratic equation modulo $\wp$, which has roots modulo $\wp$ only if its discriminante is a square. (Notice $v(x)=0$, thus $v(1+x+2\gamma_1)=0$, thus $v(\beta)=0$.) This discriminante is
\begin{equation*}
4\left(4\gamma_1^2\gamma_3^2-(1+x-2\gamma_1)(1+x+2\gamma_1)\right)=-4(1-x)^2,
\end{equation*}
which is not a square. By (\ref{bed_152_kompakt}), one always has $v(b)=0$. The contribution of this case again is
\begin{itemize}
\item[$\bullet$]$x\in\operatorname N\backslash F^{\times 2}$: $b\in\mathbf o_F^\times$ and $\beta\in\mathbf o_F$.
\end{itemize}
{\bf 6) Let $x\in\operatorname{N}\backslash F^{\times2}$ and $-1\notin F^{\times 2}$ and $v(\frac{\beta}{\alpha})< 0$.} 
Again, one can assume $\beta=1$ and $\alpha\in\wp$. As seen twice, this case follows from the previous one by substituting $\beta\mapsto \alpha A^{-1}$ and $\gamma_1\mapsto-\gamma_1$ there. This yields  the contribution
\begin{itemize}
\item[$\bullet$] $x\in\operatorname N\backslash F^{\times 2}$: $b\in\mathbf o_F^\times$ and $\alpha\in\wp$.
\end{itemize}
{\bf Computation of the local linking number.}
Now one collects all the contributions of the cases 1 to 6 marked by $\bullet$ and computes the integral on them. If there isn't given a range of $y_1$ resp. $y_2$, then it is arbitrary in the support of $\phi(by_1,y_2)$, i.e. $y_1\in b^{-1}\mathbf o_F^\times$ resp. $y_2\in\mathbf o_F$.

Notice first that the contributions of the cases $-1$ a square resp. $-1$ not a square are the same.
One gets
\begin{eqnarray*}
&&<\phi,\begin{pmatrix}b&0\\0&1\end{pmatrix}.\phi>_x\\
&=&
\mathbf 1_{\operatorname N\backslash (1+\wp)}(x)\mathbf 1_{\mathbf o_F^\times}(b)\int_{\mathbf o_F^\times}\int_{\mathbf o_F}\int_{T}~dt~dy_2~dy_1\\
&&
+\mathbf 1_{1+\wp}(x)\mathbf 1_{(1-x)\mathbf o_F^\times}(b)\operatorname{vol}_T(T_1)2q^{-v(1-x)}\operatorname{vol}^\times(\mathbf o_F^\times)\operatorname{vol}(\mathbf o_F)\\
&&
+\mathbf 1_{1+\wp}(x)\mathbf 1_{(1-x)^{-1}\mathbf o_F^\times}(b)\operatorname{vol}_T(T_1)2q^{-v(1-x)}\operatorname{vol}^\times(\mathbf o_F^\times)\operatorname{vol}(\mathbf o_F)\\
&=&
\left(\mathbf 1_{\operatorname N\backslash (1+\wp)}(x)\mathbf 1_{\mathbf o_F^\times}(b) 
+ \mathbf 1_{1+\wp}(x)\left(\mathbf 1_{(1-x)\mathbf o_F^\times}(b)+\mathbf 1_{(1-x)^{-1}\mathbf o_F^\times}(b)\right)q^{-v(1-x)}\right)\operatorname{vol},
\end{eqnarray*}
where $T_1:=\left\{\alpha+\beta\sqrt A\in T\mid v(\beta)\geq v(\alpha)\right\}$ and
\begin{equation*}
\operatorname{vol}:=\operatorname{vol}_T(T)\operatorname{vol}^\times(\mathbf o_F^\times)\operatorname{vol}(\mathbf o_F).
\end{equation*}
This finishes the proof of Example~\ref{bsp_lln_translatiert_kompakt}.
%%%%%%%%%%%%%%%%%%%%%%%%%%%%%%%%%%%%%%%
%%%
%%%
%%%
%%%%%%%%%%%%%%%%%%%%%%%%%%%%%%%%%%%%%%%%%%%%%%%%%%%%
\section*{Appendix B\\Proof of Example~\ref{bsp_lln_nichtkompakt}}
For the proof of Example~\ref{bsp_lln_nichtkompakt}, we need the following Lemma. Its short proof is left to the reader.
\begin{lem}\label{G_maximalkompakt_zerlegung}
Let $K/F$ be split. Then
\begin{displaymath}
 \operatorname{GL}_2(\mathbf o_F) = \mathbf o_K^\times N(\mathbf o_F)N'(\mathbf o_F) \quad{\bigcup^\bullet} \quad \mathbf o_K^\times N(\mathbf o_F)w N(\wp),
\end{displaymath}
where $N(X)$ is group of unipotent upper triangular matrices having nontrivial entries in $X$.
\end{lem}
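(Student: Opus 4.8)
We must show that for split $K/F$,
\[
 \operatorname{GL}_2(\mathbf o_F) = \mathbf o_K^\times N(\mathbf o_F)N'(\mathbf o_F) \;{\bigcup^\bullet}\; \mathbf o_K^\times N(\mathbf o_F)w N(\wp),
\]
where in the split case $\mathbf o_K^\times$ sits inside $\operatorname{GL}_2(\mathbf o_F)$ as the diagonal torus $\{\operatorname{diag}(a,d) : a,d\in\mathbf o_F^\times\}$, $N$ is the strictly upper triangular unipotent group, $N'$ its transpose, and $w = \begin{pmatrix}0&-1\\1&0\end{pmatrix}$.

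\begin{proof}[Proof of Lemma~\ref{G_maximalkompakt_zerlegung}]
The plan is to run the usual Bruhat-type argument over the residue field and then lift. Reduction modulo $\wp$ gives a surjection $\operatorname{GL}_2(\mathbf o_F)\to\operatorname{GL}_2(\FF_q)$ whose kernel is $1+\wp M_2(\mathbf o_F)$, a pro-$p$ group contained in $N(\mathbf o_F)N'(\wp)$ and in particular in $\mathbf o_K^\times N(\mathbf o_F)N'(\mathbf o_F)$; so it suffices to prove the corresponding decomposition of $\operatorname{GL}_2(\FF_q)$ and check which double coset the lift of the kernel lands in. Over $\FF_q$ the classical Bruhat decomposition reads $\operatorname{GL}_2(\FF_q) = B \sqcup BwB = B N' \sqcup BwN$, where $B = ZN$ is the upper-triangular Borel and $Z$ the diagonal torus; writing $B = ZN$ and noting $Z$ is exactly the image of $\mathbf o_K^\times$, one gets $\operatorname{GL}_2(\FF_q) = \mathbf o_K^\times N N' \sqcup \mathbf o_K^\times N w N$. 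Lifting, the first piece lifts to $\mathbf o_K^\times N(\mathbf o_F)N'(\mathbf o_F)$; for the second piece one must show the big-cell representatives can be chosen with the lower unipotent parameter in $\wp$, which is possible because $w N(\mathbf o_F^\times\cdot)\subset \mathbf o_K^\times N(\mathbf o_F)N'$ already — concretely, for $u\in\mathbf o_F^\times$ one has the identity
\[
 w\begin{pmatrix}1&u\\0&1\end{pmatrix} = \begin{pmatrix}0&-1\\1&u\end{pmatrix} = \begin{pmatrix}-u^{-1}&-1\\0&u\end{pmatrix}\begin{pmatrix}1&0\\u^{-1}&1\end{pmatrix},
\]
which lies in $Z(\mathbf o_F)N(\mathbf o_F)N'(\mathbf o_F)$ — so only $u\in\wp$ survives in the big cell, giving the $wN(\wp)$ on the right.

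For disjointness, suppose an element of $\operatorname{GL}_2(\mathbf o_F)$ lay in both pieces. An element of $\mathbf o_K^\times N(\mathbf o_F)N'(\mathbf o_F)$, written $\begin{pmatrix}a&0\\0&d\end{pmatrix}\begin{pmatrix}1&s\\0&1\end{pmatrix}\begin{pmatrix}1&0\\r&1\end{pmatrix}$ with $a,d\in\mathbf o_F^\times$, $s,r\in\mathbf o_F$, has lower-left entry $dr\in\mathbf o_F$ and lower-right entry $d(1+sr)$; its lower-left entry has valuation $\ge$ that of its lower-right entry only when $r\in\mathbf o_F$ forces nothing, but crucially the $2\times2$ minor of the bottom row shows the lower-right entry is a unit whenever the matrix reduces into $BN'\subset\operatorname{GL}_2(\FF_q)$, i.e. the reduction lies in the small cell $B$. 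An element of $\mathbf o_K^\times N(\mathbf o_F)wN(\wp)$ reduces into $BwN(\FF_q)$, and since $w\notin B$ over $\FF_q$ its reduction lies in the big Bruhat cell $BwB$, which is disjoint from $B$. Hence the two subsets of $\operatorname{GL}_2(\mathbf o_F)$ have disjoint images in $\operatorname{GL}_2(\FF_q)$ and are therefore disjoint, completing the proof.
\end{proof}

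The only genuinely delicate point is the bookkeeping in the lifting step — making sure the lower unipotent in the big cell can be normalized to lie in $\wp$ (not merely in $\mathbf o_F$) and that nothing is double-counted — but this is exactly what the displayed identity above handles, since it absorbs any unit lower-unipotent entry back into the $NN'$-cell. Everything else is the standard Bruhat decomposition over a finite field together with the observation that the congruence subgroup $1+\wp M_2(\mathbf o_F)$ is harmless.
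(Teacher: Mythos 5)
Your overall strategy (Bruhat over $\FF_q$, then lift) can be made to work, and you do correctly identify the decisive invariant — the lower-right entry is a unit on one piece and lies in $\wp$ on the other. But as written the proof contains several computational errors that break it.

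First, the asserted residue-field decomposition $\operatorname{GL}_2(\FF_q)=\mathbf o_K^\times NN'\sqcup\mathbf o_K^\times NwN$ is not a disjoint union: a generic element of $ZNwN$, namely $\begin{pmatrix}a&0\\0&d\end{pmatrix}\begin{pmatrix}1&s\\0&1\end{pmatrix}w\begin{pmatrix}1&u\\0&1\end{pmatrix}=\begin{pmatrix}as&a(su-1)\\d&du\end{pmatrix}$, has $(2,2)$ entry $du$, so for $u\neq 0$ it already lies in $ZNN'$. The correct statement over $\FF_q$ is $\operatorname{GL}_2(\FF_q)=ZNN'\sqcup ZNw$ (second parameter $u=0$), and one can check the cardinalities $q^2(q-1)^2+q(q-1)^2=|\operatorname{GL}_2(\FF_q)|$. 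Second, your displayed identity is wrong: multiplying out gives
\[
\begin{pmatrix}-u^{-1}&-1\\0&u\end{pmatrix}\begin{pmatrix}1&0\\u^{-1}&1\end{pmatrix}=\begin{pmatrix}-2u^{-1}&-1\\1&u\end{pmatrix}\neq\begin{pmatrix}0&-1\\1&u\end{pmatrix};
\]
the correct factorization has $+u^{-1}$ in the upper-left corner, i.e.\ $w\begin{pmatrix}1&u\\0&1\end{pmatrix}=\begin{pmatrix}u^{-1}&0\\0&u\end{pmatrix}\begin{pmatrix}1&-u\\0&1\end{pmatrix}\begin{pmatrix}1&0\\u^{-1}&1\end{pmatrix}$. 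Third, in the disjointness paragraph the lower-right entry of $\begin{pmatrix}a&0\\0&d\end{pmatrix}\begin{pmatrix}1&s\\0&1\end{pmatrix}\begin{pmatrix}1&0\\r&1\end{pmatrix}=\begin{pmatrix}a(1+sr)&as\\dr&d\end{pmatrix}$ is $d$, not $d(1+sr)$ (you transposed the $(1,1)$ and $(2,2)$ entries). Fourth, $1+\wp M_2(\mathbf o_F)$ is not contained in $N(\mathbf o_F)N'(\wp)$: an element of the latter has $(2,2)$ entry exactly $1$, while the kernel allows any $(2,2)$ entry in $1+\wp$. Finally, the lifting step is not actually carried out; knowing where the kernel lands is not by itself enough to transport a residue-field decomposition to $\mathbf o_F$.

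The cleanest argument bypasses $\FF_q$ entirely. For $g=\begin{pmatrix}\alpha&\beta\\\gamma&\delta\end{pmatrix}\in\operatorname{GL}_2(\mathbf o_F)$ one has $\det g\in\mathbf o_F^\times$, so either $\delta\in\mathbf o_F^\times$ or else $\delta\in\wp$ and then $\beta\gamma\in\mathbf o_F^\times$ forces $\gamma\in\mathbf o_F^\times$. In the first case $g=\operatorname{diag}(a,d)\begin{pmatrix}1&s\\0&1\end{pmatrix}\begin{pmatrix}1&0\\r&1\end{pmatrix}$ with $d=\delta$, $r=\gamma/\delta\in\mathbf o_F$, $a=(\det g)/\delta\in\mathbf o_F^\times$, $s=\beta/a\in\mathbf o_F$; in the second, $g=\operatorname{diag}(a,d)\begin{pmatrix}1&s\\0&1\end{pmatrix}w\begin{pmatrix}1&u\\0&1\end{pmatrix}$ with $d=\gamma$, $u=\delta/\gamma\in\wp$, $a=(\det g)/\gamma\in\mathbf o_F^\times$, $s=\alpha/a\in\mathbf o_F$. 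Disjointness is immediate since the $(2,2)$ entry is a unit in the first piece and in $\wp$ in the second.
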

By Lemma~\ref{G_maximalkompakt_zerlegung}, 
\begin{equation*}
 \phi =\chi\cdot\mathbf 1_{\operatorname{GL}_2(\mathbf o_F)}=\chi\cdot\mathbf 1_{N(\mathbf o_F)N'(\mathbf o_F)}+\chi\cdot\mathbf 1_{N(\mathbf o_F)wN(\wp)}.
\end{equation*}
For $y\in  TNN'$ take the following representative modulo $T$
\begin{displaymath}
 y=\begin{pmatrix}1&y_2\\0&1\end{pmatrix}\begin{pmatrix}1&0\\y_3&1\end{pmatrix}.
\end{displaymath}
For such $y$ there is
\begin{displaymath}
 \phi(y)=\phi_1(y_2,y_3):=\mathbf 1_{\mathbf o_F}(y_2)\mathbf 1_{\mathbf o_F}(y_3).
\end{displaymath}
Analogly, for  $y\in  TNwN$ take the following representative modulo $T$
\begin{displaymath}
 y=\begin{pmatrix}1&y_1\\0&1\end{pmatrix}w\begin{pmatrix}1&y_4\\ 0&1\end{pmatrix}
\end{displaymath}
to get
\begin{displaymath}
 \phi(y)=\phi_2(y_1,y_4):=\mathbf 1_{\mathbf o_F}(y_1)\mathbf 1_{\wp}(y_4).
\end{displaymath}
We will use this splitting of $\phi$ for the exterior function $\psi=\phi$:
\begin{align*}
 <\phi,\begin{pmatrix}b&0\\0&1\end{pmatrix}.\phi>_x &=
\int_F\int_F\int_T\phi(t^{-1}\gamma(x)ty)~dt~\bar\chi_1(b)\bar\phi_1(b^{-1}y_2,by_3)~dy_2dy_3 \\
&+
\int_F\int_F\int_T\phi(t^{-1}\gamma(x)ty)~dt~\chi_1(b)\bar\phi_2(by_1,b^{-1}y_4)~dy_1dy_4.
\end{align*}
Choosing Haar measures on $TNN'$ and $TNwN$  as in Section~\ref{section_nonkompakt}, we get
\begin{displaymath}
 \vol_{T\backslash G}(T\cdot\operatorname{GL}_2(\mathbf o_F)) =\vol(\mathbf o_F)^2(1+q^{-1}).
\end{displaymath}
%%%%%%%%%%%%%%%%%%%%%%%
%%
% %
% %
%%%%%%%%%%%%%%%%%%%%%%%%%%%
The inner integrand $\phi(t^{-1}\gamma(x)ty)$ does not vanish only if there is $(r\mathbf o_F^\times,s\mathbf o_F^\times)\subset T$ such that
\begin{equation}\label{bsp_nonkp_bed_inner_integrand}
 \begin{pmatrix}r&0\\0&s\end{pmatrix}t^{-1}\gamma(x)ty \in\operatorname{GL}_2(\mathbf o_F).
\end{equation}
As $\operatorname{GL}_2(\mathbf o_F)$ is fundamental for $\chi$ unramified, there is at most one class $(r\mathbf o_F^\times,s\mathbf o_F^\times)$ satisfying (\ref{bsp_nonkp_bed_inner_integrand}).
If there is, then the value of the inner integrand is $\phi(t^{-1}\gamma(x)ty)=\chi_1(r^{-1}s)$.
We choose once and for all representatives
\begin{equation*}
 t=\begin{pmatrix}a&0\\0&1\end{pmatrix}\quad\textrm{ and } \quad\gamma(x)=\begin{pmatrix}-1&x\\-1&1\end{pmatrix}.
\end{equation*}
For $y\in NN'$  condition (\ref{bsp_nonkp_bed_inner_integrand})  is
\begin{equation}\label{bspganzheitsbedingungen}
 \begin{pmatrix}
       -r(1+y_2y_3-a^{-1}xy_3)&r(-y_2+a^{-1}x)\\
	-s(a(1+y_2y_3)-y_3)&s(1-ay_2)
      \end{pmatrix}\in \operatorname{GL}_2(\mathbf o_F).
\end{equation}
That is, all components are integral and the determinant satisfies
\begin{equation*}
 rs\det (t^{-1}\gamma(x)ty) =rs(x-1)\in\mathbf o_F^\times.
\end{equation*}
So we can choose 
\begin{equation*}
 r=s^{-1}(1-x)^{-1}.
\end{equation*}
The value of the inner integrand now is $\chi_1(r^{-1}s)=\chi_1(1-x)$.

For $y\in NwN$ condition (\ref{bsp_nonkp_bed_inner_integrand})  is
\begin{equation}\label{bspganzheitsbedingungen'}
\begin{pmatrix}
       r(-y_1+a^{-1}x)&-r(1-y_1y_4+a^{-1}xy_4)\\
	s(1-ay_1)&-s(a(1-y_1y_4)+y_4)
      \end{pmatrix}\in \operatorname{GL}_2(\mathbf o_F).
\end{equation}
Replacing $(y_2,y_3)$ by $(y_1,-y_4)$ in (\ref{bspganzheitsbedingungen}) yields (\ref{bspganzheitsbedingungen'}).
That is, the inner integral for $y\in NwN$ can be deduced of that for $y\in NN'$.

For the exterior function we observe: If $y\in NwN$, then
\begin{equation*}
\begin{pmatrix}b&0\\0&1\end{pmatrix} .\bar\phi_2(y)=\bar\chi\begin{pmatrix}1&0\\0&b\end{pmatrix}\bar\phi_2\begin{pmatrix}by_1&-1+y_1y_4\\1&b^{-1}y_4\end{pmatrix}=\chi_1(b)\phi_2(by_1,b^{-1}y_4).
\end{equation*}
While if $y\in NN'$, then
\begin{equation*}
\begin{pmatrix}b&0\\0&1\end{pmatrix} .\bar\phi_1(y)=\bar\chi\begin{pmatrix}b&0\\0&1\end{pmatrix}\bar\phi_1\begin{pmatrix}1+y_2y_3&b^{-1}y_2\\by_3&1\end{pmatrix}=\chi_1^{-1}(b)\phi_2(b^{-1}y_2,by_3).
\end{equation*}
Thus, for deducing the case $y\in NwN$ of the case $y\in NN'$,
one has to substitute $(y_1,y_4)\mapsto (y_2,-y_3)$, and additionally one has to replace  $b$ by $b^{-1}$.

Further, there is an a-priori condition on $(y_2,y_3)$ given by the exterior function:
\begin{equation}\label{bsp_nonkompakt_apriori_bed}
 v(y_2)\geq -v(y_3).
\end{equation}
For assuming $v(y_2)<-v(y_3)$ and $\phi_1(b^{-1}y_2,by_3)\not=0$ implies the contradiction 
$v(b)\leq v(y_2)<-v(y_3)\leq v(b)$.

\subsection*{Conditions for the inner integrand}
From now on we assume  (\ref{bsp_nonkompakt_apriori_bed}).

{\it Claim.} 
In the case $y\in NN'$ the conditions (\ref{bspganzheitsbedingungen}) for the inner integrand not to vanish  imply exactly the following possible scopes.

For $x\in F^\times\backslash (1+\wp)$:
\begin{itemize}
 \item[$\bullet$] $-v(y_3)\leq v(y_2)<0$:
\begin{itemize}
 \item[$\ast$] $a\in\frac{1}{y_2}(1+\wp^{-v(y_2)})$ (for $v(s)=v(y_2)$)
 \item[$\ast$] $a\in\frac{x}{y_2}(1+\wp^{-v(y_2)})$ (for $v(s)=-v(1-x)$)
\end{itemize}
\item[$\bullet$] $v(1+y_2y_3)<-v(y_3)\leq v(y_2)$:
\begin{itemize}
 \item[$\ast$] $a\in\frac{y_3}{1+y_2y_3}(1+\wp^{-v(y_3)-v(1+y_2y_3)})$ (for $v(s)=v(1+y_2y_3)$)
 \item[$\ast$] $a\in\frac{xy_3}{1+y_2y_3}(1+\wp^{-v(y_3)-v(1+y_2y_3)})$ (for $v(s)=-v(y_3)-v(1-x)$)
\end{itemize}
\item[$\bullet$] $v(y_2)\geq 0$ and $v(y_3)\geq 0$: 
For $v(x)\geq 0$: $0\leq v(a)\leq v(x)$ (for $v(s)=0$); for $v(x)<0$: $0\leq v(s)\leq -v(x)$ (for $v(s)=-v(a)$).
\item[$\bullet$] $v(y_3)<0$ and $v(y_2)=-v(y_3)\leq v(1+y_2y_3)$: 
For $v(x)\geq 0$: $2v(y_3)\leq v(a)\leq v(x)+2v(y_3)$ (for $v(s)=0$); for $v(x)<0$: $-v(y_3)\leq v(s)\leq -v(y_3)-v(x)$ (for $v(s)=-v(a)+v(y_3)$).
\end{itemize}
For $x\in 1+\wp$:
\begin{itemize}
 \item[$\bullet$] $-v(y_3)\leq v(y_2)< -v(1-x)$:
 $a\in\frac{x}{y_2}(1+\wp^{-v(y_2)})$ (for $v(s)=-v(1-x)$)
 \item[$\bullet$] $-v(y_3)\leq v(y_2)\leq -v(1-x)$:
 $a\in\frac{1}{y_2}(1+\wp^{-v(y_2)})$ (for $v(s)=v(y_2)$)
\item[$\bullet$] $0\leq v(1+y_2y_3)<-v(y_3)-v(1-x)$: $a\in\frac{xy_3}{1+y_2y_3}(1+\wp^{-v(y_3)-v(1+y_2y_3)})$ (for $v(s)=-v(y_3)-v(1-x)$)
\item[$\bullet$] $0\leq v(1+y_2y_3)\leq-v(y_3)-v(1-x)$: $a\in\frac{y_3}{1+y_2y_3}(1+\wp^{-v(y_3)-v(1+y_2y_3)})$ (for $v(s)=v(1+y_2y_3)$)
\end{itemize}
There is no difficulty in checking that all these scopes satisfy condition (\ref{bspganzheitsbedingungen}).  That these  are the only possible ones is done by wearying	distinction of cases.  
As those are characteristic for the proof of Theorem~\ref{satz_translatiert_nicht_kompakt} and we skipped that, we include them here to give an insight of what is going on.
\begin{proof}[Proof of Claim]
Condition  (\ref{bspganzheitsbedingungen}) is equivalent to the following four conditions
\begin{equation}\label{bsp1}
 a^{-1}\in \frac{1+y_2y_3}{xy_3}(1+\frac{s(1-x)}{1+y_2y_3}\mathbf o_F),
\end{equation}
\begin{equation}\label{bsp2}
 a^{-1}\in \frac{y_2}{x}(1+\frac{s(1-x)}{y_2}\mathbf o_F),
\end{equation}
\begin{equation}\label{bsp3}
 a\in\frac{y_3}{1+y_2y_3}(1+\frac{1}{sy_3}\mathbf o_F),
\end{equation}
\begin{equation}\label{bsp4}
 a\in\frac{1}{y_2}(1+ s^{-1}\mathbf o_F).
\end{equation}
We now go through these conditions distinguishing different cases for $s$ and $x$.
{\bf 1)} Assume $v(s)<0$.
Then  $a=\frac{1}{y_2}(1+a')$ where $a'\in s^{-1}\mathbf o_F\subset \wp$ by (\ref{bsp4}). Inserting this in (\ref{bsp3}) we get
\begin{equation*}
 \frac{1}{y_2}(1+a')+y_3 a' \in s^{-1}\mathbf o_F,
\end{equation*}
Which by (\ref{bsp_nonkompakt_apriori_bed}) is equivalent to
\begin{equation}\label{bsp3v(s)<0}
 v(y_2)\leq v(s)<0.
\end{equation}
In particular, $v(y_3)>0$. Assuming this, the conditions (\ref{bsp1}) and (\ref{bsp2}) are equivalent to 
\begin{equation}\label{bsp1v(s)<0}
 1\in s(1-x)\mathbf o_F
\end{equation}
and
\begin{equation}\label{bsp2v(s)<0}
a^{-1}\in\frac{y_2}{x}(1+\frac{s(1-x)}{y_2}\mathbf o_F).
\end{equation}

{\bf 1.1)} If  $v(\frac{s(1-x)}{y_2})>0$, then combining (\ref{bsp4}) and (\ref{bsp2v(s)<0})   we get
\begin{equation}\label{bspv(s)<0,2gut,24kombination}
 a\in \frac{x}{y_2}(1+\frac{s(1-x)}{y_2}\mathbf o_F) \cap \frac{1}{y_2}(1+ s^{-1}\mathbf o_F).
\end{equation}
For this intersection to be nonempty, one has to assume $x\in 1+\wp$.
Collecting  conditions (\ref{bsp3v(s)<0}) and (\ref{bsp1v(s)<0}) as well as $v(\frac{s(1-x)}{y_2})>0$  we have
\begin{equation}\label{bspv(s)<0,2gut,s-bed}
 v(y_2)\leq v(s)\leq -v(1-x).
\end{equation}
If $v(s)=v(y_2)+j$, then (\ref{bspv(s)<0,2gut,24kombination}) is
\begin{equation*}
 a\in \frac{x}{y_2}(1+\wp^{v(1-x)+j}) \cap \frac{1}{y_2}(1+ \wp^{-v(y_2)-j}).
\end{equation*}
For $j=0$ this is $ a\in \frac{1}{y_2}(1+ \wp^{-v(y_2)})$,
because in this case $x\in 1+\wp^{v(1-x)}$.
For $j>0$ we have $x\notin 1+\wp^{v(1-x)+j}$. Then the scope for $a$ is nonempty only if $x\in1+ \wp^{-v(y_2)-j}$ is satisfied, that is $v(1-x)\geq -v(y_2)-j$. Together with (\ref{bspv(s)<0,2gut,s-bed}) we now get $v(y_2)+j=v(s)=-v(1-x)$. 
Summing up: In the case  $v(\frac{s(1-x)}{y_2})>0$ we have $x\in 1+\wp$ and the scopes are:
\begin{itemize}
 \item[$\bullet$] $v(s)=v(y_2)\leq -v(1-x)$: $a\in \frac{1}{y_2}(1+ \wp^{-v(y_2)})$,
\item[$\bullet$] $v(y_2)<v(s)=-v(1-x)$: $a\in \frac{x}{y_2}(1+ \wp^{-v(y_2)})$.
\end{itemize}
{\bf 1.2)} If  $v(\frac{s(1-x)}{y_2})\leq0$, then we find by (\ref{bsp3v(s)<0}), (\ref{bsp1v(s)<0}) and (\ref{bsp2v(s)<0}):
\begin{equation*}
v(s)\geq v(y_2)\left\{\begin{array}{l}=-v(a)\geq v(s)+v(1-x)-v(x)\\\geq v(s)+v(1-x)\end{array}          \right.
\end{equation*}
As we always have $\operatorname{max}\{v(1-x)-v(x),v(1-x)\}\geq 0$, this means $v(s)=v(y_2)$
and $\operatorname{max}\{v(1-x)-v(x),v(1-x)\}=0$. 
Thus, the scope in this case is nonzero only if $x\in F^\times\backslash (1+\wp)$. Then it is given by
\begin{itemize}
 \item $v(s)=v(y_2)<0$: $a\in \frac{1}{y_2}(1+ \wp^{-v(y_2)})$.
\end{itemize}
Now the case $v(s)<0$ is exhausted.

{\bf 2)} Assume  $v(s)\geq 0$.
This case is much more complicated than the previous.
Condition (\ref{bsp4}) now is
\begin{equation}\label{bsp4v(s)>0}
 v(a)\geq -v(y_2)-v(s).
\end{equation}
For condition (\ref{bsp3}) we distinguish further:

{\bf 2.1)} If $-v(s)-v(y_3)>0$: Then $\frac{1}{sy_3}\mathbf o_F\subset\wp$ and $a=\frac{y_3}{1+y_2y_3}(1+a')^{-1}$ where $a'\in\frac{1}{sy_3}\mathbf o_F$.
Inserting $a^{-1}$ in (\ref{bsp1}) we get the condition
\begin{equation}\label{bsp1v(s)>0,3gut}
 (1+y_2y_3)(1-x-xa')\in s(1-x)\mathbf o_F.
\end{equation}
{\bf 2.1.1)} If additionally $x\in F^\times\backslash (1+\wp)$, then collecting all conditions for $v(s)$ we get:
\begin{equation}\label{bspv(s)>0,xwegvon1,3gut,v(s)-bed}
 \left.\begin{array}{r}0\leq\\v(1+y_2y_3)-v(y_2y_3)\leq\end{array}\right\}
v(s)\left\{\begin{array}{l}<-v(y_3) \textrm{ by distinction of cases}\\\leq v(1+y_2y_3)\textrm{ by (\ref{bsp1v(s)>0,3gut}) and (\ref{bsp4v(s)>0})}\end{array}\right..
\end{equation}
It is easily seen that (\ref{bspv(s)>0,xwegvon1,3gut,v(s)-bed}) is satisfied only for $v(s)=v(1+y_2y_3)$. Thus, (\ref{bsp2}) means
\begin{displaymath}
 a^{-1}\in \frac{(1+y_2y_3)(1-x)}{x}\mathbf o_F,
\end{displaymath}
as  $v(y_2)\geq -v(y_3)>v(s)\geq v(s(1-x))$.
 This condition is because of $v(a)=v(\frac{y_3}{1+y_2y_3})$  equivalent to $-v(y_3)\geq v(1-x)-v(x)$. As we are studying the case $x\notin 1+\wp$ , this is always true.
Thus, the scope for $x\in F^\times\backslash (1+\wp)$ is
\begin{itemize}
 \item[$\bullet$] $v(s)=v(1+y_2y_3)$, $0\leq v(1+y_2y_3)<-v(y_3)$: 

$a\in \frac{y_3}{1+y_2y_3}(1+\wp^{-v(y_3)-v(1+y_2y_3)})$.
\end{itemize}
{\bf 2.1.2)} But if $x\in 1+\wp$, 
then we first show that the assumption $v(a^{-1}x)<v(s)+v(1-x)$ implies a contradiction:
For then (\ref{bsp2}) and (\ref{bsp3}) would imply
\begin{displaymath}
 a\in \frac{x}{y_2}(1+\wp)\cap\frac{y_3}{1+y_2y_3}(1+\wp),
\end{displaymath}
which is satisfied only for $\frac{x}{y_2}\in\frac{y_3}{1+y_2y_3}(1+\wp)$ or equivalently $1+y_2y_3\in y_2y_3(1+\wp)$. Which is a contradiction as for this, $1\in\wp$ must hold.
Thus, we have $v(a^{-1}x)\geq v(s)+v(1-x)$, and (\ref{bsp2}) together with (\ref{bsp3}) gives: $v(1+y_2y_3)-v(y_3)=-v(a)\geq v(s)+v(1-x)$. Collecting all the condition for $v(s)$ found so far:
\begin{equation}\label{bspv(s)>0,xnahe1,3gut,v(s)-bed}
 \left.\begin{array}{r}0\leq\\v(\frac{1+y_2y_3}{y_2y_3})\leq\\\end{array}\right\}
        v(s)
\left\{\begin{array}{l}<-v(y_3) \textrm{ by distinction of cases}\\ \leq v(y_2)-v(1-x)\textrm{ by (\ref{bsp4}) und (\ref{bsp2})}\\\leq -v(y_3)-v(1-x)+v(1+y_2y_3)\textrm{ by (\ref{bsp2})}\end{array}\right..
\end{equation}
It is easily seen that these conditions shrink to
\begin{displaymath}
 v(1+y_2y_3)\leq v(s)\leq -v(y_3)-v(1-x)
\end{displaymath}
because of $v(1+y_2y_3)<v(y_2)$.
Thus, $v(s)+v(1-x)-v(1+y_2y_3)>0$. Combing (\ref{bsp1}) and (\ref{bsp3}) we get
\begin{equation}\label{bspv(s)>0,xnahe1,3gut,1gut,13kombi}
 a\in \frac{xy_3}{1+y_2y_3}(1+\frac{s(1-x)}{1+y_2y_3}\mathbf o_F)\cap\frac{y_3}{1+y_2y_3}(1+\frac{1}{sy_3}\mathbf o_F).
\end{equation}
For $v(s)=v(1+y_2y_3)$ this is %(\ref{bspv(s)>0,xnahe1,3gut,1gut,13kombi})
\begin{displaymath}
 a\in \frac{y_3}{1+y_2y_3}(1+\wp^{-v(y_3)-v(1+y_2y_3)}),
\end{displaymath}
as in that case $x\in 1+\frac{s(1-x)}{1+y_2y_3}\mathbf o_F$. Let $v(s)=v(1+y_2y_3)+j$ where $j>0$. For the intersection in (\ref{bspv(s)>0,xnahe1,3gut,1gut,13kombi}) to be nonempty, we must have $v(1-x)\geq -v(1+y_2y_3)-v(y_3)-j$. Combined with the rest of the condition for $v(s)$ this is $v(s)=-v(y_3)-v(1-x)$. In particular, $v(1+y_2y_3)<-v(y_3)-v(1-x)$.

Summing up the conditions of this case, we get for $x\in 1+\wp$:
\begin{itemize}
 \item[$\bullet$] $v(y_2)>-v(y_3)$ and $v(y_3)\leq -v(1-x)$: $v(s)=0$ and $a\in\frac{y_3}{1+y_2y_3}(1+\wp^{-v(y_3)})$,
\item[$\bullet$] $v(y_2)>-v(y_3)$ and $v(y_3)<-v(1-x)$: $v(s)=-v(y_3)-v(1-x)$ and $a\in\frac{xy_3}{1+y_2y_3}(1+\wp^{-v(y_3)})$,
\item[$\bullet$] $v(y_2)=-v(y_3)$ and $v(1+y_2y_3)\leq -v(y_3)-v(1-x)$: $v(s)=v(1+y_2y_3)$ and $a\in\frac{y_3}{1+y_2y_3}(1+\wp^{-v(y_3)-v(1+y_2y_3)})$,
\item[$\bullet$] $v(y_2)=-v(y_3)$ and $0\leq v(1+y_2y_3)<-v(y_3)-v(1-x)$: $v(s)=-v(y_3)-v(1-x)$ and $a\in\frac{xy_3}{1+y_2y_3}(1+\wp^{-v(y_3)-v(1+y_2y_3)})$.
\end{itemize}
This finishes the case $-v(s)-v(y_3)>0$.
\medskip

{\bf 2.2)} If $-v(s)-v(y_3)\leq0$:

{\bf 2.2.1)} If additionally $v(s)+v(1-x)>v(y_2)$, then $\frac{s(1-x)}{y_2}\mathbf o_F\subset\wp$. Thus, by (\ref{bsp2}) we have $v(a)=v(x)-v(y_2)$. All conditions for  $v(s)$, which are given by the distinction of cases and (\ref{bsp3}) and  (\ref{bsp4}) are:
\begin{equation}\label{bspv(s)>0,3schlecht,2gut,v(s)-bed}
\left.\begin{array}{lr}
\textrm{Distinction of cases: }&0\leq\\\textrm{Distinction of cases: }&-v(y_3)\leq\\\textrm{by (\ref{bsp4}): }&-v(x)\leq\\\textrm{by (\ref{bsp3}): }&-v(1+y_2y_3)+v(y_2)-v(x)\leq\\\textrm{Distinction of cases: }&v(y_2)-v(1-x)<\end{array}\right\} v(s).
\end{equation}
We show that the assumption $v(s)+v(1-x)-v(1+y_2y_3)>0$ implies a contradiction:
Then (\ref{bsp1}) and (\ref{bsp2}) would imply
\begin{displaymath}
a\in\frac{xy_3}{1+y_2y_3}(1+\wp)\cap\frac{x}{y_2}(1+\wp).
\end{displaymath}
This intersection to be nonempty means $\frac{y_2y_3}{1+y_2y_3}\in 1+\wp$, or equvalently $1\in\wp$.

Thus, $v(s)+v(1-x)-v(1+y_2y_3)\leq 0$. Then by (\ref{bsp1}) we have
\begin{equation*}
v(s)\leq\left\{\begin{array}{l}v(1+y_2y_3)-v(1-x)\\v(y_2)+v(y_3)-v(1-x)\end{array},\right.
\end{equation*}
or equivalently $v(s)\leq- v(1-x)$. Conditions  (\ref{bspv(s)>0,3schlecht,2gut,v(s)-bed}) imply in particular $0\leq -v(1-x)$, that is $x\in F^\times\backslash(1+\wp)$. Moreover, $v(y_2)<0$. 
The conditions for $v(s)$ are now simplyfied to
\begin{equation*}
\left.\begin{array}{r}0\\-v(x)\end{array}\right\}\leq v(s)\leq -v(1-x),
\end{equation*}
which is equivalent to $v(s)=-v(1-x)$, for $x\notin 1+\wp$.
In this case the scope is given by
\begin{itemize}
\item[$\bullet$] $v(y_2)<0$: $v(s)=-v(1-x)$ and $a\in \frac{x}{y_2}(1+\wp^{-v(y_2)})$.
\end{itemize}
This finishes the case $v(s)+v(1-x)>v(y_2)$.
\medskip\\

{\bf 2.2.2)} But if $v(s)+v(1-x)\leq v(y_2)$, then  (\ref{bsp2}) is equivalent to
\begin{equation}\label{bspv(s)>0,3schlecht,2schlecht,2}
-v(a)\geq v(s)+v(1-x)-v(x).
\end{equation}We distingush further:

{\bf 2.2.2.1)} If additionally  $v(s)+v(1-x)-v(1+y_2y_3)>0$, then we have by (\ref{bsp1})
\begin{displaymath}
a\in \frac{xy_3}{1+y_2y_3}(1+\frac{s(1-x)}{1+y_2y_3}\mathbf o_F)\subset\frac{xy_3}{1+y_2y_3}(1+\wp).
\end{displaymath}
In particular, $v(a)=v(xy_3)-v(1+y_2y_3)$.
We collect the conditions for $v(s)$:
\begin{equation}\label{bspv(s)>0,3schlecht,1schlecht,v(s)-bed}
\left.\begin{array}{r}0\leq\\-v(y_3)\leq\\-v(x)-v(y_3)\leq\\-v(x)+v(\frac{1+y_2y_3}{y_2y_3})\leq\\v(\frac{1+y_2y_3}{1-x})<\end{array}\right\}v(s)\leq
\left\{\begin{array}{ll}v(y_2)-v(1-x)&\textrm{by distinction of cases}\\&\textrm{by distinction of cases}\\v(\frac{1+y_2y_3}{y_3(1-x)}) & \textrm{by (\ref{bsp3}) and (\ref{bsp2})}\\ & \textrm{by (\ref{bsp4})}\\ & \textrm{by distinction of cases}\end{array}.\right.
\end{equation}
The two conditions on the right combined are equivalent to
\begin{equation*}
v(s)\leq -v(y_3)-v(1-x),
\end{equation*}
by the general assumption $v(y_2)\geq -v(y_3)$ (\ref{bsp_nonkompakt_apriori_bed}).
This implies   $v(1+y_2y_3)<-v(y_3)$ as well as
 $0\leq -v(1-x)$, that is $x\in F^\times\backslash (1+\wp)$. Inserting this in (\ref{bspv(s)>0,3schlecht,1schlecht,v(s)-bed}) we finally get $v(s)=-v(y_3)-v(1-x)$. 
Thus, the scope of this case exists only for $x\in F^\times\backslash (1+\wp)$. It is then given by
\begin{itemize}
\item[$\bullet$] $0\leq v(1+y_2y_3)<-v(y_3)$: $v(s)=-v(y_3)-v(1-x)$ and $a\in \frac{xy_3}{1+y_2y_3}(1+\wp^{-v(y_3)-v(1+y_2y_3)})$.
\end{itemize}
{\bf 2.2.2.2)} If additionally  $v(s)+v(1-x)-v(1+y_2y_3)\leq 0$,
then (\ref{bsp1}) is equivalent to.
\begin{equation*}
-v(a)\geq v(s)+v(1-x)-v(x)-v(y_3).
\end{equation*}
By the distinction of cases and the conditions we now get the conditions
\begin{equation}\label{bspv(s)>0,allesschlecht,v(s)-bed}
\left.\begin{array}{r}0\\-v(y_3)\end{array}\right\}\leq v(s)\leq \left\{\begin{array}{l}v(y_2)-v(1-x)\\v(1+y_2y_3)-v(1-x)\end{array}\right..
\end{equation}
By(\ref{bsp1}) to (\ref{bsp4}) we see that $v(a)$ must satisfy:
\begin{equation}\label{bspv(s)>0,allesschlecht,a-bed}
\left.\begin{array}{r}-v(1+y_2y_3)-v(s)\\-v(y_2)-v(s)\end{array}\right\}\leq v(a) \leq \left\{\begin{array}{l}-v(s)+vx)-v(1-x)\\-v(s)+v(x)-v(1-x)+v(y_3)\end{array}\right. .
\end{equation}We distinguish further for $v(y_3)$:

{\bf 2.2.2.2 a)} If  $v(y_3)\geq 0$:
We show that $v(y_2)<0$ is not possible. For then (\ref{bspv(s)>0,allesschlecht,v(s)-bed}) would imply $v(1-x)\leq v(y_2)<0$. But  (\ref{bspv(s)>0,allesschlecht,a-bed}) would imply $v(1-x)-v(x)\leq v(y_2)<0$, which is a contradiction for $v(x)<0$.

Thus, $v(y_2)\geq 0$ and the conditions (\ref{bspv(s)>0,allesschlecht,v(s)-bed}) shrink to $0\leq v(s)\leq -v(1-x)$. This implies $x\notin 1+\wp$.
Condition (\ref{bspv(s)>0,allesschlecht,a-bed}) is reduced to $-v(s)\leq v(a)\leq -v(s)+v(x)-v(1-x)$.
We no can write down the scope of this case. The scope is nonempty only if
 $x\in F^\times\backslash(1+\wp)$ and is given by
\begin{itemize}
\item[$\bullet$] $v(y_2)\geq 0$ and $v(y_3)\geq 0$:
\begin{itemize}
\item[$\ast$] For $v(x)\geq 0$: $v(s)=0$ and $0\leq v(a)\leq v(x)$,
\item[$\ast$] For  $v(x)<0$: $v(a)=-v(s)$ and $0\leq v(s)\leq -v(x)$.
\end{itemize}
\end{itemize}
{\bf 2.2.2.2 b)} If $v(y_3)<0$: We show that $v(y_2)>-v(y_3)$ is not possible. Then (\ref{bspv(s)>0,allesschlecht,v(s)-bed}) would imply $v(1-x)\leq v(y_3)<0$, that is $v(x)<0$. By (\ref{bspv(s)>0,allesschlecht,a-bed}) we would get $0\leq v(x)-v(1-x)+v(y_3)$, which could be satisfied only for $v(x)\geq 0$.

Thus, $v(y_2)=-v(y_3)$. We show that $v(1+y_2y_3)<-v(y_3)$ is not possible, for   (\ref{bspv(s)>0,allesschlecht,v(s)-bed}) would again imply $v(1-x)<0$. Inserting this in (\ref{bspv(s)>0,allesschlecht,a-bed}), we get $-v(y_3)\leq v(1+y_2y_3)$, contradiction.

Thus, $v(1+y_2y_3)\geq -v(y_3)$. Then  (\ref{bspv(s)>0,allesschlecht,v(s)-bed}) means $-v(y_3)\leq v(s)\leq -v(y_3)-v(1-x)$, which is satisfied only if $x\notin 1+\wp$.  Condition (\ref{bspv(s)>0,allesschlecht,a-bed}) gives $v(y_3)-v(s)\leq v(a)\leq -v(s)+v(x)-v(1-x)+v(y_3)$. 
The scope of this case is nonempty only if
 $x\in F^\times\backslash(1+\wp)$. Then it is given by
\begin{itemize}
\item[$\bullet$]  $0<-v(y_3)\leq v(1+y_2y_3)$:
\begin{itemize}
\item[$\ast$] For $v(x)\geq 0$: $v(s)=-v(y_3)$ and $2v(y_3)\leq v(a)\leq v(x)+2v(y_3)$,
\item[$\ast$] For $v(x)<0$: $v(a)=v(y_3)-v(s)$ and $-v(y_3)\leq v(s)\leq-v(y_3) -v(x)$.
\end{itemize}
\end{itemize}
Finally, all the scopes  under the conditions
 (\ref{bsp1}) to (\ref{bsp4}) are treated. This proves the Claim.
\end{proof}
\subsection*{Computation of the inner integral}
We compute the inner integral incase $y\in NN'$. The scopes of integration are summed up in the Claim above. The variable of integration is $a\in F^\times$.

In case $x\in F^\times \backslash (1+\wp)$ we get:
\begin{align*}
& \chi_1(1-x)\vol^\times(\mathbf o_F^\times)(1-q^{-1})^{-1}\cdot\\
&\Biggl(
(\lvert v(x)\rvert +1)(1-q^{-1})\Bigl(\mathbf 1_{\mathbf o_F}(y_2)\mathbf 1_{\mathbf o_F}(y_3)+\mathbf 1_{F\backslash \mathbf o_F}(y_3)\mathbf 1_{-y_3^{-1}(1+\wp^{-v(y_3)})}(y_2)\Bigr)\Biggr.\\
& \quad\quad+
2q^{v(y_2)}\mathbf 1_{\wp^{-v(y_2)}}(y_3)\mathbf 1_{F\backslash \mathbf o_F}(y_2) +2q^{v(y_3)}\mathbf 1_{F\backslash \mathbf o_F}(y_3)\mathbf 1_{\wp^{-v(y_3)+1}}(y_2)\\
&\quad\quad\Biggl.
+2q^{v(y_3)+v(1+y_2y_3)}\mathbf 1_{F\backslash \mathbf o_F}(y_3)\mathbf 1_{-y_3^{-1}(\mathbf o_F^\times\backslash(1+\wp^{-v(y_3)}))}(y_2)\Biggr).
\end{align*}

In case $x\in 1+\wp$ we get:
\begin{align*}
 &\chi_1(1-x)\vol^\times(\mathbf o_F^\times)(1-q^{-1})^{-1}\cdot\\
&\Biggl( \mathbf 1_{F\backslash \wp^{-v(1-x)+1}}(y_2)q^{v(y_2)}\mathbf 1_{\wp^{-v(y_2)}}(y_3) 
+\mathbf 1_{F\backslash \wp^{-v(1-x)}}(y_2)q^{v(y_2)}\mathbf 1_{\wp^{-v(y_2)}}(y_3)\Biggr.\\
&\Biggl.+ \mathbf 1_{F\backslash \wp^{-v(1-x)+1}}(y_3)q^{v(y_3)}\mathbf 1_{\wp^{-v(y_3)+1}}(y_2)
+\mathbf 1_{F\backslash \wp^{-v(1-x)}}(y_3)q^{v(y_3)}\mathbf 1_{\wp^{-v(y_3)+1}}(y_2)\Biggr.\\
&\Biggl.\quad\quad+q^{v(y_3)+v(1+y_2y_3)}\mathbf 1_{F\backslash \wp^{-v(1-x)+1}}(y_3)\mathbf 1_{-y_3^{-1}(\mathbf o_F^\times\backslash(1+\wp^{-v(y_3)-v(1-x)+1}))}(y_2)\Biggr.\\
&\Biggl.\quad\quad  +q^{v(y_3)+v(1+y_2y_3)}\mathbf 1_{F\backslash \wp^{-v(1-x)}}(y_3)\mathbf 1_{-y_3^{-1}(\mathbf o_F^\times\backslash(1+\wp^{-v(y_3)-v(1-x)}))}(y_2)
\Biggr).
\end{align*}
For the inner integral in case  $y\in NwN$ we only have to consider such $(y_2,y_3)$ which satisfy $-v(y_3)<v(y_2)$, by the shape of the exterior function $\phi_2$.
Here, we get the following integrals:

In case $x\in F^\times\backslash (1+\wp)$:
\begin{align*}
 & \chi_1(1-x)\vol^\times(\mathbf o_F^\times)(1-q^{-1})^{-1}\cdot\\
&\Biggl(
(\lvert v(x)\rvert +1)(1-q^{-1})\mathbf 1_{\mathbf o_F\cap\wp^{-v(y_3)+1}}(y_2)\mathbf 1_{\mathbf o_F}(y_3)\Biggr.\\
& \quad\quad\Biggl. +
2q^{v(y_2)}\mathbf 1_{\wp^{-v(y_2)+1}}(y_3)\mathbf 1_{F\backslash \mathbf o_F}(y_2) +2q^{v(y_3)}\mathbf 1_{F\backslash \mathbf o_F}(y_3)\mathbf 1_{\wp^{-v(y_3)+1}}(y_2)\Biggr).
\end{align*}
In case $x\in 1+\wp$:
\begin{align*}
 & \chi_1(1-x)\vol^\times(\mathbf o_F^\times)(1-q^{-1})^{-1}\cdot\\
&\Biggl( \mathbf 1_{F\backslash \wp^{-v(1-x)+1}}(y_2)q^{v(y_2)}\mathbf 1_{\wp^{-v(y_2)+1}}(y_3)
+\mathbf 1_{F\backslash \wp^{-v(1-x)}}(y_2)q^{v(y_2)}\mathbf 1_{\wp^{-v(y_2)+1}}(y_3)\Biggr.\\
&\Biggl. +
\mathbf 1_{F\backslash \wp^{-v(1-x)+1}}(y_3)q^{v(y_3)}\mathbf 1_{\wp^{-v(y_3)+1}}(y_2)
+
\mathbf 1_{F\backslash \wp^{-v(1-x)}}(y_3)q^{v(y_3)}\mathbf 1_{\wp^{-v(y_3)+1}}(y_2)\Biggr).
\end{align*}

\subsection*{Computation of the exterior integral}
Now we integrate these inner integrals against the exterior function 
$\phi_i$.. Dabei treten f"ur $x\in 1+\wp$ folgende Summanden auf:

In case $x\in 1+\wp$ and $y\in NN'$, the exterior function is $\phi_1$.
We work off the inner integral term by term and get up to the factor
$\chi_1(b(1-x))\vol^\times(\mathbf o_F^\times)$:
\begin{align*}
 &(1-q^{-1})^{-1}\int\int\mathbf 1_{b\mathbf o_F\backslash\wp^{-v(1-x)+1}}(y_2)q^{v(y_2)}\mathbf 1_{b^{-1}\mathbf o_F\cap\wp^{-v(y_2)}}(y_3)~dy_2~dy_3\\ 
&\quad\quad=\mathbf 1_{\wp^{v(1-x)}}(b^{-1})\lvert b^{-1}\rvert \bigl(-v(b)-v(1-x)+1\bigr)\vol(\mathbf o_F)^2;
\end{align*}
\begin{align*}
 &(1-q^{-1})^{-1}\int\int\mathbf 1_{b\mathbf o_F\backslash\wp^{-v(1-x)}}(y_2)q^{v(y_2)}\mathbf 1_{b^{-1}\mathbf o_F\cap\wp^{-v(y_2)}}(y_3)~dy_2~dy_3\\ 
&\quad\quad=\mathbf 1_{\wp^{v(1-x)+1}}(b^{-1})\lvert b^{-1}\rvert \bigl(-v(b)-v(1-x)\bigr)\vol(\mathbf o_F)^2;
\end{align*}
\begin{align*}
 &(1-q^{-1})^{-1}\int\int\mathbf 1_{b\mathbf o_F\cap\wp^{-v(y_3)+1}}(y_2)q^{v(y_3)}\mathbf 1_{b^{-1}\mathbf o_F\backslash\wp^{-v(1-x)+1}}(y_3)~dy_2~dy_3\\ 
&\quad\quad=\mathbf 1_{\wp^{v(1-x)}}(b)\lvert b\rvert \bigl(q^{-1}+v(b)-v(1-x)\bigr)\vol(\mathbf o_F)^2;
\end{align*}
\begin{align*}
 &(1-q^{-1})^{-1}\int\int\mathbf 1_{b\mathbf o_F\cap\wp^{-v(y_3)+1}}(y_2)q^{v(y_3)}\mathbf 1_{b^{-1}\mathbf o_F\backslash\wp^{-v(1-x)}}(y_3)~dy_2~dy_3\\ 
&\quad\quad=\mathbf 1_{\wp^{v(1-x)+1}}(b)\lvert b\rvert \bigl(q^{-1}+v(b)-v(1-x)-1\bigr)\vol(\mathbf o_F)^2;
\end{align*}
\begin{align*}
& (1-q^{-1})^{-1}\int\int\Bigl( \mathbf 1_{b\mathbf o_F\cap-y_3^{-1}(\mathbf o_F^\times\backslash(1+\wp^{-v(y_3)-v(1-x)+1}))}(y_2)q^{v(y_3)+v(1+y_2y_3)}\cdot\Bigr.\\
&\Bigl.\quad\quad\quad\quad \mathbf 1_{b^{-1}\mathbf o_F\backslash\wp^{-v(1-x)+1}}(y_3)~dy_2~dy_3\Bigr)\\
&\quad\quad=\mathbf 1_{\wp^{v(1-x)}}(b)\lvert b\rvert \bigl(1-2q^{-1}+(1-q^{-1})(v(b)-v(1-x))\bigr)\vol(\mathbf o_F)^2;
\end{align*}
\begin{align*}
 &(1-q^{-1})^{-1}\int\int\mathbf 1_{b\mathbf o_F\cap-y_3^{-1}(\mathbf o_F^\times\backslash(1+\wp^{-v(y_3)-v(1-x)}))}(y_2)q^{v(y_3)+v(1+y_2y_3)}\\
&\quad\quad\quad\quad\quad\quad\quad\cdot\mathbf 1_{b^{-1}\mathbf o_F\backslash\wp^{-v(1-x)}}(y_3)~dy_2~dy_3\\ 
&=\mathbf 1_{\wp^{v(1-x)+1}}(b)\lvert b\rvert \bigl(1-2q^{-1}+(1-q^{-1})(v(b)-v(1-x)-1)\bigr)\vol(\mathbf o_F)^2;%\quad\quad&&
\end{align*}
In case $x\in 1+\wp$ and $y\in NwN$ the exterior function is $\phi_2$. We get up to the factor
 $\chi_1(b(1-x))\vol^\times(\mathbf o_F)$:
\begin{align*}
 &(1-q^{-1})^{-1}\int\int\mathbf 1_{b^{-1}\mathbf o_F\backslash\wp^{-v(1-x)+1}}(y_2)q^{v(y_2)}\mathbf 1_{b\wp\cap\wp^{-v(y_2)}}(y_3)~dy_2\:dy_3\\ 
&\quad\quad
=\mathbf 1_{\wp^{v(1-x)}}(b)\lvert b\rvert \bigl(v(b)-v(1-x)+1\bigr)q^{-1}\vol(\mathbf o_F)^2;
\end{align*}
\begin{align*}
 &(1-q^{-1})^{-1}\int\int\mathbf 1_{b^{-1}\mathbf o_F\backslash\wp^{-v(1-x)}}(y_2)q^{v(y_2)}\mathbf 1_{b\wp\cap\wp^{-v(y_2)+1}}(y_3)~dy_2~dy_3\\ 
&\quad\quad=\mathbf 1_{\wp^{v(1-x)+1}}(b)\lvert b\rvert \bigl(v(b)-v(1-x)\bigr)q^{-1}\vol(\mathbf o_F)^2;
\end{align*}
\begin{align*}
 &(1-q^{-1})^{-1}\int\int\mathbf 1_{b^{-1}\mathbf o_F\cap\wp^{-v(y_3)+1}}(y_2)q^{v(y_3)}\mathbf 1_{b\wp\backslash\wp^{-v(1-x)+1}}(y_3)~dy_2~dy_3\\ 
&\quad\quad=\mathbf 1_{\wp^{v(1-x)}}(b^{-1})\lvert b^{-1}\rvert \bigl(-v(b)-v(1-x)\bigr)\vol(\mathbf o_F)^2;
\end{align*}
\begin{align*}
 &(1-q^{-1})^{-1}\int\int\mathbf 1_{b^{-1}\mathbf o_F\cap\wp^{-v(y_3)+1}}(y_2)q^{v(y_3)}\mathbf 1_{b\wp\backslash\wp^{-v(1-x)}}(y_3)~dy_2~dy_3\\ 
&\quad\quad=\mathbf 1_{\wp^{v(1-x)+1}}(b^{-1})\lvert b^{-1}\rvert \bigl(-v(b)-v(1-x)-1\bigr)\vol(\mathbf o_F)^2;
\end{align*}
Summing up all terms and remembering the missing factor, we get the translated local linking number for $x\in 1+\wp$
and $\phi=\chi\cdot\mathbf 1_{\operatorname{GL}_2(\mathbf o_F)}$:
\begin{align*}
 & <\phi,\begin{pmatrix} b&0\\0&1\end{pmatrix}.\phi>_{x\in 1+\wp} =
\chi_1(1-x)\chi_1(b)\vol^\times (\mathbf o_F^\times)\vol(\mathbf o_F)^2\cdot\\
&\quad\quad\quad\Biggl(\mathbf 1_{\wp^{v(1-x)+1}}(b)\lvert b\rvert \bigl(2v(b)-2(v(1-x)+1)+1\bigr)\Biggr.\\
&\quad\quad\quad\quad\quad+\mathbf 1_{\wp^{v(1-x)+1}}(b^{-1})\lvert b^{-1}\rvert \bigl(-2v(b)-2(v(1-x)+1)+1\bigr)\\
&\quad\quad\quad\quad\quad
+\mathbf 1_{\wp^{v(1-x)}}(b)\lvert b\rvert \bigl(2v(b)-2v(1-x)+1\bigr)\\
&\quad\quad\quad\quad\quad\Biggl.+\mathbf 1_{\wp^{v(1-x)}}(b^{-1})\lvert b^{-1}\rvert \bigl(-2v(b)-2v(1-x)+1\bigr)\Biggr).
\end{align*}
This is the term claimed in Example~\ref{bsp_lln_nichtkompakt}.

In case $x\in F^\times\backslash(1+\wp)$, the occuring exterior integrals can mostly be read off those for $x\in 1+\wp$  substituting there $v(1-x)=0$.
The exeptional terms for $y\in NN'$ are:
\begin{align*}
&\int\int\mathbf 1_{\mathbf o_F\cap b\mathbf o_F}(y_2)\mathbf 1_{\mathbf o_F\cap b^{-1}\mathbf o_F}(y_3)~dy_2~dy_3
 =
\mathbf 1_{\mathbf o_F}(b)\lvert b\rvert +\mathbf 1_{\wp}(b^{-1})\lvert b^{-1}\rvert
\end{align*}
and
\begin{align*}
& \int\int\mathbf 1_{ b\mathbf o_F\cap -y_3^{-1}(1+\wp^{-v(y_3)})}(y_2)\mathbf 1_{ b^{-1}\mathbf o_F\backslash \mathbf o_F}(y_3)~dy_2~dy_3 =
\mathbf 1_{\wp}(b)\lvert b\rvert (1-q^{-1}).
\end{align*}
The exeptional term for $y\in NwN$ is:
\begin{align*}
& \int\int\mathbf 1_{\mathbf o_F\cap b^{-1}\mathbf o_F}(y_2)\mathbf 1_{\mathbf o_F\cap b\wp}(y_3)~dy_2~dy_3=
\mathbf 1_{\mathbf o_F}(b)\lvert b\rvert q^{-1} +\mathbf 1_{\wp}(b^{-1})\lvert b^{-1}\rvert.
\end{align*}
In these formulae we left the factor $\vol(\mathbf o_F)^2$ on the right and the factor 
 $\chi_1(1-x)\vol^\times(\mathbf o_F^\times)$.
Summing up all the terms, we get the translated local linking number for  $x\in F^\times\backslash(1+\wp)$
and $\phi=\chi\cdot\mathbf 1_{\operatorname{GL}_2(\mathbf o_F)}$:
\begin{align*}
 & <\phi,\begin{pmatrix} b&0\\0&1\end{pmatrix}.\phi>_x =
\chi_1(1-x)\chi_1(b)\vol^\times (\mathbf o_F^\times)\vol(\mathbf o_F)^2\cdot\\
&\quad\quad\quad\Biggl(\mathbf 1_{\mathbf o_F^\times}(b)\bigl(\lvert v(x)\rvert +1\bigr)(1+q^{-1}) +\mathbf 1_{\wp}(b)\lvert b\rvert \bigl(4v(b)+2\lvert v(x)\rvert\bigr)\Biggr.
\Biggr.\\
& \quad\quad\quad\quad\quad\quad\quad\quad\quad\quad\Biggl. +\mathbf 1_{\wp}(b^{-1})\lvert b^{-1}\rvert \bigl(-4v(b)+2\lvert v(x)\rvert\bigr)\Biggr).
\end{align*}
This is the result claimed in Example~\ref{bsp_lln_nichtkompakt}.
%%%%%%%%%%%%%%%%%%%%%%%%%
%%%
%%%
%%%%%%%%%%%%%%%%%%%%%%%%%%%%%%%

%\bibliographystyle{plain}
%\bibliography{reference_1}

\vspace*{.5cm}
\end{document}